%% file: main.tex
\documentclass[a4paper, 12pt]{article}
\pdfoutput=1

\input{_preamble}

\input{_commands}

\title{Mackey formulas for smooth stacks}
\author{Chenjing Bu \quad and \quad Lucien Hennecart}
\date{}

\begin{document}

\initlengths

\maketitle
\vspace{-6pt}

\begin{abstract}
    \input{abstract}
\end{abstract}

\vspace{6pt}
{
    \hypersetup{linkcolor=black}
    \tableofcontents
}

\vspace{1em}
\section{Introduction}

\input{intro}

\section{Background}

\input{background}

\section{The Steinberg stack}
\label{section-steinberg}
\input{steinberg}

\section{The Mackey formula}
\label{section-mackey}
\input{mackey}

\section{Examples}
\label{section-examples}
\input{quotient}

\phantomsection
\addcontentsline{toc}{section}{References}
\sloppy
\setstretch{1.1}
\renewcommand*{\bibfont}{\normalfont\small}
\printbibliography

\authorinforule

\authorinfo{Chenjing Bu}
    {bucj@mailbox.org}
    {Mathematical Institute, University of Oxford, Oxford OX2 6GG, United Kingdom}

\authorinfo{Lucien Hennecart}
    {lucien.hennecart@u-picardie.fr}
    {Laboratoire Ami\'enois de Math\'ematique Fondamentale et Appliqu\'ee, CNRS UMR 7352, Universit\'e de Picardie Jules Verne, 33 rue Saint Leu, 80000 Amiens, France}
\end{document}

%% file: _preamble.tex
\usepackage[margin=25mm]{geometry}

\usepackage[utf8]{inputenc}
\usepackage[T1]{fontenc}
\usepackage[british]{babel}
\usepackage[style=british]{csquotes}
\usepackage[en-GB]{datetime2}
\DTMlangsetup[en-GB]{ord=omit}
\usepackage{xcolor}

\usepackage{amsthm, thmtools}
\usepackage{mathtools}
\usepackage{titlesec} 
\usepackage{hyperref}
\hypersetup{
    bookmarksnumbered,
    colorlinks,
    linkcolor={cyan!50!blue},
    citecolor={cyan!50!blue},
    urlcolor={cyan!50!blue}
}
\usepackage[capitalise,nameinlink]{cleveref}

\usepackage{tikz}
\usepackage{tikz-cd}

\usepackage[expansion=false,nopatch=footnote]{microtype}
\usepackage[all]{nowidow}

\usepackage[lining,tabular,scale=.9]{FiraSans}
\usepackage[tt=false,semibold]{libertinus}
\usepackage[libertine,smallerops]{newtxmath}
\usepackage[lining,scaled=.8]{FiraMono}
\usepackage[cal=boondox,frak=euler]{mathalpha}
\providecommand{\libertinusDisplay}{\LibertinusDisplay}

\DeclareFontEncoding{MDA}{}{}
\DeclareSymbolFont{mathdesignA}{MDA}{mdput}{m}{n}
\SetSymbolFont{mathdesignA}{bold}{MDA}{mdput}{b}{n}
\DeclareSymbolFontAlphabet{\mathbb}{mathdesignA}

\tikzcdset{
    arrow style=tikz,
    arrows={/tikz/line width=.5pt},
    diagrams={>={Straight Barb[scale=0.8]}}
}

\DeclareFontFamily{OMX}{MnSymbolE}{}
\DeclareSymbolFont{MnLargeSymbols}{OMX}{MnSymbolE}{m}{n}
\SetSymbolFont{MnLargeSymbols}{bold}{OMX}{MnSymbolE}{b}{n}
\DeclareFontShape{OMX}{MnSymbolE}{m}{n}{
    <-6>  MnSymbolE5
   <6-7>  MnSymbolE6
   <7-8>  MnSymbolE7
   <8-9>  MnSymbolE8
   <9-10> MnSymbolE9
  <10-12> MnSymbolE10
  <12->   MnSymbolE12
}{}
\DeclareFontShape{OMX}{MnSymbolE}{b}{n}{
    <-6>  MnSymbolE-Bold5
   <6-7>  MnSymbolE-Bold6
   <7-8>  MnSymbolE-Bold7
   <8-9>  MnSymbolE-Bold8
   <9-10> MnSymbolE-Bold9
  <10-12> MnSymbolE-Bold10
  <12->   MnSymbolE-Bold12
}{}
\DeclareMathDelimiter{[}{\mathopen}{MnLargeSymbols}{'000}{MnLargeSymbols}{'000}
\DeclareMathDelimiter{]}{\mathclose}{MnLargeSymbols}{'005}{MnLargeSymbols}{'005}
\DeclareMathDelimiter{\llbr}{\mathopen}{MnLargeSymbols}{'102}{MnLargeSymbols}{'102}
\DeclareMathDelimiter{\rrbr}{\mathclose}{MnLargeSymbols}{'107}{MnLargeSymbols}{'107}

\usepackage{bm}
\usepackage{cases}
\usepackage{accents}

\usepackage{setspace}
\usepackage{needspace}

\usepackage{subdepth} 

\newcommand{\initlengths}{%
    \setlength{\abovedisplayshortskip}{3pt plus 9pt minus 3pt}%
    \setlength{\belowdisplayshortskip}{9pt plus 9pt minus 9pt}%
    \setlength{\abovedisplayskip}{9pt plus 9pt minus 9pt}%
    \setlength{\belowdisplayskip}{9pt plus 9pt minus 9pt}%
    \hfuzz 1pt%
    \tolerance 400
}

\newcommand{\authorinforule}{\noindent\rule{0.38\textwidth}{0.4pt}}

\newlength{\authorwidth}
\newcommand{\authorinfo}[3]{{%
    \raggedright
    \setlength{\leftskip}{1.5em}
    \setlength{\parindent}{0em}
    \setstretch{1}
    \par%
    {\small%
    \makebox[\authorwidth][l]{#1}%
    \texttt{#2}%
    \\
    #3.}
    \vspace{6pt}\par
}}

\usepackage{titling}
\pretitle{\vspace{-24pt}\setstretch{1.05}\begin{center}\LARGE\libertinusDisplay\fontdimen2\font=0.3em} 
\posttitle{\end{center}\vspace{-6pt}}

\newcommand{\parasep}{9pt plus 3pt minus 3pt}
\titleformat{\section}{\Large\libertinusDisplay\setstretch{1.1}}{\thesection}{1em}{}
\titleformat{\subsection}{\large\firaproportional}{\thesubsection}{1em}{}
\titleformat{\subsubsection}{\firamedium\boldmath}{\thesubsubsection}{1em}{}

\usepackage{tocloft}
\renewenvironment{abstract}{%
    \centering\begin{minipage}{.85\textwidth}%
    \setlength{\parindent}{1.5em}%
    \centerline{\firamedium\abstractname}%
    \par\vspace{12pt}%
}{\end{minipage}\par\vspace{3pt}}

\makeatletter
\declaretheoremstyle[
    spaceabove=\parasep, spacebelow=\parasep,
    postheadspace=.5em,
    headfont=\firamedium\firaproportional,
    headpunct={},
    headformat={\NAME\ \NUMBER\NOTE.},
    notefont=\normalfont,
    notebraces={\ (}{)},
    bodyfont=\itshape,
]{theorem}
\declaretheoremstyle[
    spaceabove=\parasep, spacebelow=\parasep,
    postheadspace=.5em,
    headfont=\firamedium\firaproportional,
    headpunct={},
    headformat={\NAME\NOTE.},
    notefont=\normalfont,
    notebraces={\ (}{)},
    bodyfont=\itshape,
]{theorem*}
\declaretheoremstyle[
    spaceabove=\parasep, spacebelow=\parasep,
    postheadspace=.5em,
    headfont=\firamedium\firaproportional,
    headpunct={},
    headformat={\NAME\ \NUMBER\NOTE.},
    notefont=\normalfont,
    notebraces={\ (}{)},
]{definition}
\declaretheoremstyle[
    spaceabove=\parasep, spacebelow=\parasep,
    postheadspace=.5em,
    headfont=\firamedium\firaproportional,
    headpunct={},
    headformat={\NAME\NOTE.},
    notefont=\normalfont,
    notebraces={\ (}{)},
]{definition*}
\declaretheoremstyle[
    spaceabove=\parasep, spacebelow=\parasep,
    postheadspace=.5em,
    headfont=\firamedium\firaproportional,
    headpunct={},
    headformat={\NOTE},
    notefont=\normalfont\bfseries\boldmath,
    notebraces={\hspace{-.33em}}{.},
]{para}

\renewenvironment{proof}[1][\proofname]{\par
    \pushQED{\qed}%
    \normalfont\trivlist
    \item[\hskip\labelsep\firamedium #1\@addpunct{.}]\ignorespaces
}{%
    \popQED\endtrivlist\@endpefalse
}

\expandafter\def\csname equation*@qed\endcsname{\equation@qed}
\makeatother

\declaretheorem[parent=section, style=theorem, name=Theorem]{theorem}

\declaretheorem[sibling=theorem, style=theorem, name=Lemma]{lemma}
\declaretheorem[sibling=theorem, style=theorem, name=Corollary]{corollary}
\declaretheorem[sibling=theorem, style=theorem, name=Proposition]{proposition}

\declaretheorem[numbered=no, style=theorem*, name=Theorem]{theorem*}
\declaretheorem[numbered=no, style=theorem*, name=Lemma]{lemma*}
\declaretheorem[numbered=no, style=theorem*, name=Proposition]{proposition*}
\declaretheorem[sibling=theorem, style=definition, name=Definition]{definition}
\declaretheorem[sibling=theorem, style=definition, name=Remark]{remark}
\declaretheorem[sibling=theorem, style=definition, name=Example]{example}
\declaretheorem[sibling=theorem, style=definition, name=Assumption]{assumption}

\numberwithin{equation}{section}

\crefdefaultlabelformat{#2{\upshape#1}#3}

\crefformat{equation}{#2{\upshape(#1)}#3}
\crefrangeformat{equation}{{\upshape#3(#1)#4--#5(#2)#6}}
\crefmultiformat{equation}{#2{\upshape(#1)}#3}{ and #2{\upshape(#1)}#3}{, #2{\upshape(#1)}#3}{, and~#2{\upshape(#1)}#3}

\crefformat{enumi}{#2{\upshape#1}#3}
\crefrangeformat{enumi}{{\upshape#3#1#4--#5#2#6}}
\crefmultiformat{enumi}{#2{\upshape#1}#3}{ and #2{\upshape#1}#3}{, #2{\upshape#1}#3}{, and~#2{\upshape#1}#3}

\crefformat{section}{#2{\upshape\S#1}#3}
\crefrangeformat{section}{{\upshape#3\S\S#1#4--#5#2#6}}
\crefmultiformat{section}{{\upshape#2\S\S#1#3}}{ and {\upshape#2#1#3}}{, {\upshape#2#1#3}}{, and~{\upshape#2#1#3}}
\crefformat{subsection}{#2{\upshape\S#1}#3}
\crefrangeformat{subsection}{{\upshape#3\S\S#1#4--#5#2#6}}
\crefmultiformat{subsection}{{\upshape#2\S\S#1#3}}{ and {\upshape#2#1#3}}{, {\upshape#2#1#3}}{, and~{\upshape#2#1#3}}
\crefformat{para}{#2{\upshape\S#1}#3}
\crefrangeformat{para}{{\upshape#3\S\S#1#4--#5#2#6}}
\crefmultiformat{para}{{\upshape#2\S\S#1#3}}{ and {\upshape#2#1#3}}{, {\upshape#2#1#3}}{, and~{\upshape#2#1#3}}

\crefname{figure}{Figure}{Figures}

\usepackage{enumitem}
\setlist{noitemsep}
\setlist[enumerate]{label=\textnormal{(\roman*)}}

\usepackage[labelsep=period, labelfont={bf}]{caption}

\usepackage[bottom, hang, multiple]{footmisc}
\renewcommand{\footnoterule}{%
  \kern3pt
  \hrule width 0.38\columnwidth height 0.5pt
  \kern-3.5pt}

\usepackage{etoolbox}
\makeatletter
\patchcmd{\@makefntext}{\ifFN@hangfoot\bgroup}%
{\ifFN@hangfoot\bgroup\def\@makefnmark{\rlap{\firamedium\firaproportional\@thefnmark}}}{}{}%
\makeatother

\usepackage[
    backend=biber,
    style=oxnum,
    giveninits,
    maxbibnames=10,
    maxcitenames=5,
    sorting=nyt
]{biblatex}

\DeclareFieldFormat*{title}{\textit{#1}}
\DeclareFieldFormat*{journaltitle}{#1}
\renewbibmacro*{volume+number+eid}{%
    \printfield{volume}%
    \setunit*{.}%
    \printfield{number}%
}%
\DeclareFieldFormat[article]{version}{\IfDecimal{#1}{version~}{}#1}
\DefineBibliographyStrings{english}{
  withafterword = {with an appendix by},
}

\DeclareFieldFormat{pages}{%
  \iffieldundef{bookpagination}%
    {#1}%
    {\mkpageprefix[bookpagination]{#1}}%
}

%% file: _commands.tex
\newcommand{\calBun}{{\mathcal{B}\mkern-2mu\mathit{un}\mkern1mu}}
\newcommand{\calFilt}{{\mathcal{F}\mkern-3mu\mathit{ilt}\mkern1mu}}
\newcommand{\calGrad}{{\mathcal{G}\mkern-3mu\mathit{rad}\mkern1mu}}
\newcommand{\calHom}{{\mathcal{H}\mkern-4mu\mathit{om}\mkern1mu}}
\newcommand{\calMap}{{\mathcal{M}\mkern-3mu\mathit{ap}\mkern2mu}}
\newcommand{\calSt}{{\mathcal{S}\mkern-2mu\mathit{t}\mkern1mu}}
\newcommand{\git}{{/\mspace{-5mu}/}}
\newcommand{\longhookrightarrow}{\lhook\joinrel\longrightarrow}
\newcommand{\longsimto}{\mathrel{\overset{\smash{\raisebox{-.8ex}{$\sim$}}\mspace{3mu}}{\longrightarrow}}}

\renewcommand{\geq}{\geqslant}
\renewcommand{\leq}{\leqslant}

\makeatletter
\def\big#1{{\hbox{$\left#1\vbox to10\p@{}\right.\n@space$}}}
\makeatother

%% file: abstract.tex
We study parabolic induction and restriction functors
on derived categories of constructible sheaves, D-modules,
and other sheaf theories on smooth stacks with good or adequate moduli spaces.
We prove a commutation rule between them in the form of a Mackey filtration,
indexed by the set of relative positions of two elements
in the component lattice,
a combinatorial structure that characterizes
inductions and restrictions on a given stack.
This generalizes similar results for adjoint quotient stacks
studied in generalized Springer theory
and the theory of character sheaves,
and for stacks of quiver representations
appearing in Lusztig's categorification of quantum groups.
We also apply our result to the stack of semistable principal bundles on a curve,
as a first step towards Mackey formulas in geometric Langlands correspondences.

%% file: intro.tex
Induction and restriction are among the basic operations
in representation theory in various contexts,
including representations of finite groups,
character sheaves on reductive groups and Lie algebras,
various types of Hall algebras and quantum groups,
categorical Langlands correspondences, etc.
Their interaction is often governed by a \emph{Mackey formula} of the form
\begin{equation*}
    \mathrm{Res} \circ \mathrm{Ind} \sim
    \bigoplus_{i \in I} \mathrm{Ind}_i \circ T_i \circ \mathrm{Res}_i \ ,
\end{equation*}
where $I$ denotes a set of \emph{relative positions}, $\mathrm{Ind}$, $\mathrm{Ind}_i$,
$\mathrm{Res}$, and $\mathrm{Res}_i$
denote induction and restriction functors;
`$\sim$' sometimes means an isomorphism,
and sometimes means a filtration with the associated graded pieces being the right-hand side;
and $T_i$ is a twist functor, which can be trivial in some contexts.

In this paper, we focus on parabolic induction and restriction
of sheaves on algebraic stacks.
This includes many of the contexts mentioned above,
such as character sheaves,
Lusztig's categorification of quantum groups,
and many versions of Langlands correspondences.

Recent developments in the theory of algebraic stacks
make it possible to formulate these operations intrinsically to the stack.
Namely, given a stack $\mathcal{X}$ satisfying suitable conditions,
all possible parabolic inductions and restrictions that can be done on the stack
are parametrized by a combinatorial structure called the
\emph{component lattice} $\mathrm{CL} (\mathcal{X})$,
following \cite{bu2025intrinsic},
building upon the work of \textcite{halpern-leistner-instability}.
This generalizes the cocharacter lattice of an algebraic group,
where each cocharacter determines a parabolic subgroup to do induction.
For each cocharacter $\lambda \in \mathrm{CL} (\mathcal{X})$,
we have a correspondence
\[
    \mathcal{X}_\lambda
    \overset{q_\lambda}{\longleftarrow}
    \mathcal{X}_\lambda^+
    \overset{p_\lambda}{\longrightarrow}
    \mathcal{X} \ ,
\]
where $\mathcal{X}_\lambda^+$ plays the role of the parabolic subgroup,
and~$\mathcal{X}_\lambda$ plays the role of the Levi quotient.
See \cref{subsec-component-lattice}
for the precise definitions.
We then define the induction and restriction functors
\[
    \mathrm{Ind}_\lambda =
    (p_\lambda)_! \, q_\lambda^* \ ,
    \qquad
    \mathrm{Res}_\lambda =
    (q_\lambda)_! \, p_\lambda^*
\]
and also their right-adjoints on a suitable sheaf theory, see \cref{def-ind-res}.

Our main result is the following:

\begin{theorem*}[\cref{theorem:mackey-formula}]
    Let~$\mathcal{X}$ be a smooth algebraic stack
    over an algebraic space~$S$
    satisfying \cref{assumption-stratification},
    and let $\lambda, \mu \in \mathrm{CL} (\mathcal{X})$.

    Then, for a sheaf theory satisfying \cref{assumption-six-functors},
    the composition
    $\mathrm{Res}_{\mu} \circ \mathrm{Ind}_{\lambda}$
    admits a finite filtration with associated graded pieces
    \begin{align*}
        \mathrm{gr}_i (\mathrm{Res}_{\mu} \circ \mathrm{Ind}_{\lambda})
        & \simeq
        \mathrm{Ind}_{\lambda_i} \circ T^i_{\lambda, \mu} \circ \mathrm{Res}_{\mu_i} \ ,
    \end{align*}
    where~$i$ runs through the ordered set of all relative positions of~$\lambda$ and~$\mu$
    as in \cref{def-relative-position},
    $\lambda_i \in \mathrm{CL} (\mathcal{X}_\mu)$
    and~$\mu_i \in \mathrm{CL} (\mathcal{X}_\lambda)$
    are lifts of~$\lambda$ and~$\mu$ corresponding to~$i$,
    and $T^i_{\lambda, \mu}$ is a Tate twist as in \cref{equation:twist}.
\end{theorem*}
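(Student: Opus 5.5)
Apply base change to the fibre product of the two correspondences, stratify that product by relative positions, and identify each stratum with the correspondence computing $\mathrm{Ind}_{\lambda_i}\circ\mathrm{Res}_{\mu_i}$ up to a vector-bundle fibration, which yields the Tate twist.

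\textbf{Base change.} Write $\mathrm{Res}_\mu\circ\mathrm{Ind}_\lambda = (q_\mu)_!\,p_\mu^*\,(p_\lambda)_!\,q_\lambda^*$. Base change along the cartesian square formed by $p_\lambda$ and $p_\mu$ identifies this with $(q_\mu\circ \mathrm{pr}_2)_!\,(q_\lambda\circ\mathrm{pr}_1)^*$, taken along the fibre product
\[
    \mathcal{Z} = \mathcal{X}_\lambda^+ \times_{\mathcal{X}} \mathcal{X}_\mu^+ \ .
\]
So the functor is computed by the correspondence $\mathcal{X}_\lambda \leftarrow \mathcal{Z} \to \mathcal{X}_\mu$.

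\textbf{Stratifying $\mathcal{Z}$.} A point of $\mathcal{Z}$ is a point of $\mathcal{X}$ with two filtrations, of types $\lambda$ and $\mu$. Such a pair should be the same as a $\mathbb{Z}^2$-filtration, i.e.\ a point of $\mathrm{Filt}(\mathcal{X})$ for the rank-two lattice, up to the choice of splitting. Using the component lattice, $\mathcal{Z}$ decomposes set-theoretically as a disjoint union of locally closed substacks $\mathcal{Z}_i$. These are indexed by the connected components of $\mathrm{Grad}^2(\mathcal{X})$ lying over $(\lambda,\mu)$, which are exactly the relative positions $i$ of \cref{def-relative-position}.

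I would show this stratification is finite and can be ordered so that the union of the first $k$ strata is closed, or open, for each $k$. The tool is \cref{assumption-stratification}: quasi-compactness of the relevant components, together with the existence of good or adequate moduli spaces, should make finitely many components meet $\mathcal{Z}$. The ordering would come from a weight or height function on the pairs $(\lambda_i,\mu_i)$, as in the Bruhat-order argument for $P\backslash G/Q$. The excision triangles $j_!j^* \to \mathrm{id} \to i_*i^*$ then give a finite filtration of $(q_\mu \mathrm{pr}_2)_!(q_\lambda \mathrm{pr}_1)^*$ with graded pieces given by the correspondences through the $\mathcal{Z}_i$.

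\textbf{Identifying each stratum.} For each $i$, I claim
\[
    \mathcal{Z}_i \longrightarrow (\mathcal{X}_\mu)_{\lambda_i}^+ \times_{\mathcal{X}_\mu} (\mathcal{X}_\lambda)_{\mu_i}^+ \cong (\mathcal{X}_\lambda)_{\mu_i}^{+} \times_{\mathcal{X}_{\lambda,\mu}} (\mathcal{X}_\mu)_{\lambda_i}^{+}
\]
is an affine-space (vector bundle) fibration, i.e.\ the map forgetting the parts of the two filtrations that are not compatible. Here $\mathcal{X}_{\lambda,\mu}$ is the component of $\mathrm{Grad}^2$ corresponding to $i$, which serves as the common Levi $\mathcal{X}_{\lambda_i}$ in $\mathcal{X}_\mu$, equivalently $\mathcal{X}_{\mu_i}$ in $\mathcal{X}_\lambda$.

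Granting this, the correspondence through $\mathcal{Z}_i$ factors as $\mathrm{Res}_{\mu_i}$ on $\mathcal{X}_\lambda$, followed by $\mathrm{Ind}_{\lambda_i}$ into $\mathcal{X}_\mu$, composed with pushforward along an affine fibration of rank $d_i$. Smoothness of $\mathcal{X}$ makes the rank $d_i$ locally constant. Homotopy invariance in \cref{assumption-six-functors} gives $f_!f^* \simeq (-d_i)[-2d_i]$, which produces the twist $T^i_{\lambda,\mu}$ of \cref{equation:twist}. A final base change identifies the middle of the composite with $\mathrm{Ind}_{\lambda_i}\circ T^i_{\lambda,\mu}\circ\mathrm{Res}_{\mu_i}$.

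\textbf{Main obstacle.} I expect the hardest step to be proving that each stratum is an affine-space fibration of locally constant rank. The plan is to work on the deformation-theoretic level: compare tangent complexes of $\mathcal{Z}_i$ and of the target fibre product via the $\mathbb{Z}^2$-weight decomposition of $T_{\mathcal{X}}$ restricted to $\mathcal{X}_{\lambda,\mu}$. Smoothness of $\mathcal{X}$ makes the weight pieces vector bundles, and the fibre should be the sum of weight spaces with $\lambda$-weight $>0$ and $\mu$-weight $<0$, or the dual condition. To globalize, I would use the $\mathbb{G}_m$-contraction (Białynicki-Birula type) structure of $\mathcal{X}_\lambda^+ \to \mathcal{X}_\lambda$, as in Halpern-Leistner's theory.
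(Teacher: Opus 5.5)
Your architecture is the paper's: base change to $\calSt_{\lambda,\mu}$, stratification by the components $(\mathcal{X}_\lambda^+)^+_{\mu_i}$ of $\calFilt^2(\mathcal{X})$, excision triangles, and comparison of each stratum with $(\mathcal{X}_\lambda)^+_{\mu_i}\times_{\mathcal{X}^i_{\lambda,\mu}}(\mathcal{X}_\mu)^+_{\lambda_i}$ via a map $h$. However, the claim your last step rests on is false. For stacks, $h$ is not an affine-space fibration, and its relative dimension can be negative. Take $\mathcal{X}=\mathrm{B}\mathrm{GL}_2$ and $\lambda=\mu$ regular. For the closed relative position $w=1$, the stratum is $\mathrm{B}B$ and the target is $\mathrm{B}T$. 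The map $h\colon\mathrm{B}B\to\mathrm{B}T$ is a $\mathrm{B}\mathbb{G}_\mathrm{a}$-fibration of relative dimension $-1$. The correct graded piece is therefore $\langle 1\rangle$; indeed $\mathrm{Res}_\lambda\mathrm{Ind}_\lambda\mathbb{1}\simeq\mathbb{1}\langle 1\rangle\oplus\mathbb{1}$. A formula $f_!f^*\simeq\langle -d_i\rangle$ with $d_i\geq 0$ can never produce this.

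Your deformation-theoretic plan breaks for the same reason. The tangent complex of a smooth stack is a two-term complex, locally $[\mathfrak{gl}_n\otimes\mathcal{O}_U\to T_U]$, so its weight pieces are not vector bundles. Consequently $\operatorname{rank}E^i_{>0,>0}$ in \cref{eq-dim-h} carries a negative contribution $-\dim(N_\lambda\cap N_{\mu_i})$ from the stabilizers. Your weight condition is also wrong: the fibre is the part where both weights are positive, not of mixed sign. For $\mathbb{A}^1/\mathbb{G}_\mathrm{m}$ with weight one and $\lambda=\mu=\mathrm{id}$, one has $\mathcal{X}_\lambda^+=\mathcal{X}$ and $\mathrm{Res}_\lambda\mathrm{Ind}_\lambda=q_!q^*\simeq\langle -1\rangle$. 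This matches the $(1,1)$-weight space, while the mixed-sign weight spaces vanish.

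What is actually needed, and what \cref{lem-affine-bundle} proves, is only that $h$ is smooth and cohomologically contractible, i.e.\ $h_*h^*\simeq\mathrm{id}$. Étale locally on the moduli space, $\mathcal{X}\simeq U/\mathrm{GL}_n$ with $U$ smooth affine. After a smooth base change, $h$ becomes $(U_\lambda^+\cap U_\mu^+)/(N_\lambda\cap N_\mu)\to(U_\lambda\cap U_\mu^+)\times_{U_\lambda\cap U_\mu}(U_\lambda^+\cap U_\mu)$. Here $\mathrm{B}(N_\lambda\cap N_\mu)$ is contractible by $\mathbb{A}^1$-invariance. The underlying map of schemes is a vector bundle with fibre the $(>0,>0)$-weight part, by an $\mathbb{N}^2$-graded Bia\l{}ynicki-Birula theorem. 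Then $h_*h^!\simeq\langle\dim h\rangle$, so $h_!h^*\simeq\langle-\dim h\rangle$ by adjunction. This is $T^i_{\lambda,\mu}$ by definition, whatever the sign of $\dim h$, and the rest of your factorization through $\mathrm{Res}_{\mu_i}$ and $\mathrm{Ind}_{\lambda_i}$ then goes through.

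Separately, your stratification step is only a placeholder for the main geometric input, \cref{theorem:stratification}. Set-theoretic bijectivity already uses $\Theta$-reductivity, coming from the moduli-space hypothesis; it fails for $\mathrm{B}(\mathbb{G}_\mathrm{a}\rtimes\mathbb{G}_\mathrm{m})$. Beyond that, you need each $(\mathcal{X}_\lambda^+)^+_{\mu_i}\to\calSt_{\lambda,\mu}$ to be an isomorphism onto a locally closed substack. The paper checks this on charts $U/\mathrm{GL}_n$, where the strata are preimages of Bruhat cells, computed scheme-theoretically as $(U_\lambda^+\cap U_{w\mu}^+)/(P_\lambda\cap wP_\mu w^{-1})$. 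You also need a single order that works on every chart simultaneously:
\begin{itemize}
\item in case (i), it is $-\langle\lambda,\mu_i\rangle$ from the quadratic norm, compatible with each local Bruhat order by \cref{lem-bruhat-ineq} once the chart is shrunk so that the norm becomes a $W$-invariant inner product on $\Lambda_T$;
\item in case (ii), it is the Bruhat order of the one global $\mathrm{GL}_n$.
\end{itemize}
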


\noindent
Here, the assumptions on~$\mathcal{X}$ include the existence of
a good or adequate moduli space in the sense of
\textcite{alper-2013-good,alper-2014-adequate}.
The sheaf theory can be taken to be
constructible sheaves,
mixed Hodge modules,
$\ell$-adic sheaves,
or D-modules,
as we will discuss in \cref{subsec-six-functors}.

The main idea of the proof is to construct a stratification of
the \emph{Steinberg stack} of~$\mathcal{X}$ (\cref{section-steinberg}),
defined as a fibre product
$\calSt (\mathcal{X})_{\lambda, \mu} =
\mathcal{X}_\lambda^+ \times_{\mathcal{X}} \mathcal{X}_\mu^+$,
where each stratum corresponds to one term in the Mackey filtration.
This stratification is given in \cref{theorem:stratification},
and generalizes the Bruhat decomposition of an algebraic group.
A cohomological Mackey formula for linear quotient stacks
was obtained using a similar approach by the second-named author
\cite{hennecart-mackey}.

In examples (see \cref{section-examples}), our result recovers known instances of Mackey formulas
for constructible sheaves or D-modules.
The Mackey formula is crucial in the study of
cuspidal character sheaves or D-modules on reductive Lie algebras,
as in \cite{gunningham2018generalized};
see also \cite{mirkovic2004character,zhou2024character} for its characteristic~$p$ analogue.
In this situation, the set of relative positions is indexed by double cosets of the Weyl group.
In the context of quivers,
we recover the bialgebra compatibility between
multiplication and comultiplication for quantum groups
in the categorification of Lusztig \cite{lusztig1991quivers,lusztig1993introduction}.
We explain in \Cref{example:quiver-representations}
how to derive some structural results of Lusztig from our study.

Character sheaves on reductive groups were introduced by Lusztig in the 1980s
\cite{
    lusztig-1985-character-i,
    lusztig-1985-character-ii,
    lusztig-1985-character-iii,
    lusztig-1986-character-iv,
    lusztig-1986-character-v,
    lusztig2006character},
to give a geometrization of the representation theory of finite groups of Lie type.
The idea was to find perverse sheaves or constructible complexes on the group
whose Frobenius traces recover, or are closely related to,
the irreducible characters of the group.
The theory of character sheaves provides such a class of perverse sheaves.
Their Frobenius traces give \emph{almost characters},
which do not exactly coincide with irreducible characters of the group
but are better behaved from the point of view of geometry.
Then, induction and restriction of character sheaves
correspond to induction and restriction of
representations of groups via the character or Frobenius trace.
The historical motivation comes from Deligne--Lusztig theory~\cite{deligne1976representations}.

Character sheaves on Lie algebras form the linear analogue of the group situation,
and over finite fields,
they provide natural bases of the space of invariant functions \cite{letellier2004fourier}.
On Lie algebras, a very interesting structure is provided by the Fourier transform,
which relates invariant functions on a Lie algebra to its dual.
The theory of character sheaves forms a geometric version of harmonic analysis.

In Lusztig's theory, unipotent character sheaves on a connected
reductive group arise as simple perverse constituents of geometric
complexes associated with Weyl group elements and the trivial local
system on a maximal torus
\cite{lusztig-1985-character-i,lusztig-1986-character-v}.
The principal-series objects are obtained by induction from the
constant sheaf on a maximal torus, but general unipotent character
sheaves also include cuspidal objects \cite{MarsSpringer1989}.
Related induction constructions underlie Lusztig's perverse sheaves
on quiver representation stacks \cite{lusztig1991quivers} and
Schiffmann's spherical Eisenstein sheaves on stacks of coherent sheaves
\cite{Schiffmann2006,Schiffmann2012,Hennecart2024}.

The formalism of component lattices \cite{bu2025intrinsic,halpern-leistner-instability}
provides intrinsic analogues of parabolic induction for more general algebraic stacks,
as mentioned earlier.
This motivates studying, for example, for smooth algebraic stacks under suitable hypotheses,
the simple perverse constituents
of complexes obtained as parabolic inductions of constant sheaves.
A natural problem is to describe these sheaves explicitly,
in particular their supports and the irreducible local systems whose
intermediate extensions give rise to them.

We now explain our result in the context of
geometric Langlands correspondences
\cite{glc-i,glc-ii,glc-iii,glc-iv,glc-v},
where induction and restriction functors are also known as
Eisenstein and constant term functors
\cite{BravermanGaitsgory2002,drinfeld2016geometric}.
Our Mackey formula applies to the moduli stack
$\calBun_G^{\mathrm{ss}} (C)$
of semistable $G$-bundles on a geometrically connected smooth projective curve~$C$
over a field~$k$ with~$G$ split reductive (see \cref{subsec-bunss}),
but it does not apply to the full moduli stack
$\calBun_G (C)$
of all $G$-bundles on~$C$,
since we assume that the stack is quasi-compact and
has a good or adequate moduli space.
In particular, it does not apply to the full Eisenstein and constant term functors
on the automorphic side.
We expect a more involved version of the Mackey formula to hold in that case,
possibly with modified induction and restriction functors
and more sophisticated twist functors,
which we hope to explore in future work.
See also \textcite[Theorem~3.6.3]{hansen-mann-langlands}
for a similar formula on the spectral side of the
(arithmetic) local Langlands correspondence.

We also hope to explore Mackey formulas for coherent sheaves
on smooth and quasi-smooth stacks,
which the current paper does not cover,
possibly using tools from \cite{bu-padurariu-toda}.
These categories appear on the spectral side of Langlands correspondences,
with one version already appearing in \cite{hansen-mann-langlands} as mentioned above.
They are also natural categorifications of $K$-theoretic Hall algebras,
which give quantum loop algebras when applied to quiver representations,
and a Mackey formula would give a categorified bialgebra compatibility there.

Another interesting direction is to construct \emph{perverse schobers}
based on our Mackey formulas.
Perverse schobers are categorifications of
perverse sheaves on hyperplane arrangements,
proposed by \textcite{kapranov-schechtman-schobers}.
Their general definition is still unknown,
except in the case of type~A arrangements discussed by \textcite{dyckerhoff-wedrich}.
Heuristically, a perverse schober can be thought of as
a collection of categories
with induction and restriction functors between them
satisfying Mackey formulas with higher homotopy coherences.
This is made precise in type~A in \cite[Theorem~3.23]{dyckerhoff-wedrich},
and a decategorification of this idea for general Coxeter types is discussed in
\cite{kapranov-schechtman-2022-induction,kapranov-schechtman-schiffmann-yuan}.
We expect that for a general stack satisfying our assumptions,
any suitable sheaf theory will define a suitable notion of a perverse schober
on the (complexified) component lattice of the stack,
equipped with a canonical hyperplane arrangement,
and we hope the geometric setup of this paper will provide a way
to study the higher coherences that will be involved.

Finally, the study of perverse sheaves on smooth algebraic stacks
plays an important role in geometric representation theory,
particularly in Springer theory and its generalizations.
A fundamental example is the geometric realization of quiver Hecke
algebras (also called Khovanov--Lauda--Rouquier algebras)
\cite{KhovanovLauda2009,Rouquier2012}
as Ext-algebras of Lusztig's complexes on stacks of quiver
representations \cite{lusztig1991quivers,VaragnoloVasserot2011}.
These constructions have been generalized to representations
of reductive groups \cite{Sauter2013}, with further formality
results established in \cite{Antor2024}.
More generally, given a smooth algebraic stack $\mathcal{X}$
equipped with suitable induction correspondences,
a natural problem is to compute the graded Ext-algebra
\[
\operatorname{Ext}^\bullet_{\mathcal{X}}
\biggl(
\bigoplus_{\alpha}\operatorname{Ind}_{\alpha}(\mathbb Q) \ , \
\bigoplus_{\alpha}\operatorname{Ind}_{\alpha}(\mathbb Q)
\biggr)
\]
of complexes obtained by inducing suitably shifted constant sheaves.
Understanding these Ext-algebras, and in particular finding explicit
presentations by generators and relations, is an interesting problem
which may uncover new algebraic structures and provide geometric
descriptions of the corresponding derived categories.
\subsection*{Acknowledgements}
Chenjing Bu would like to thank
Qiyuan Chen,
Xinyu Li,
Tudor Pădurariu,
Peng Shan,
and Jiangfan Yuan
for helpful discussions.
Chenjing Bu would also like to thank the
Laboratoire Amiénois de Mathématique Fondamentale et Appliquée (LAMFA) in Amiens
for hosting him during his visit in November~2025,
during which a part of this work was realized,
and the authors would like to thank the CNRS for providing the funding
that allowed this visit to take place.
Chenjing Bu was supported by EPSRC grant reference EP/X040674/1.

\subsection*{AI declaration}

This paper was written entirely by the authors.
Generative~AI was used to discuss ideas
and to proofread the paper.

%% file: background.tex
\subsection{The component lattice}
\label{subsec-component-lattice}

We recall the notion of the \emph{component lattice} of a stack,
following \textcite{bu2025intrinsic}.
Roughly speaking, it is a generalization of
the cocharacter lattice of an algebraic group,
where we also take the quotient by the Weyl group.
For a general stack, it can be thought of as
glued from cocharacter lattices of the stabilizer groups.

Throughout this subsection,
we let~$S$ denote a locally noetherian algebraic space,
and~$\mathcal{X}$ an algebraic stack locally of finite type over~$S$
with affine stabilizers and separated diagonal.

\begin{definition}[Graded and filtered points]
    \label{def-grad-filt}
    Following \textcite{halpern-leistner-instability},
    we define the stacks of \emph{graded points} and \emph{filtered points}
    of~$\mathcal{X}$ as the mapping stacks
    \begin{equation}
        \label{eq-def-grad-filt}
        \calGrad (\mathcal{X}) =
        \calMap (\mathrm{B} \mathbb{G}_{\mathrm{m}}, \mathcal{X}) \ ,
        \qquad
        \calFilt (\mathcal{X}) =
        \calMap (\mathbb{A}^1 / \mathbb{G}_{\mathrm{m}}, \mathcal{X}) \ ,
    \end{equation}
    where~$\mathbb{G}_\mathrm{m}$ acts on~$\mathbb{A}^1$ by scaling,
    and all stacks are over~$S$.
    These are again algebraic stacks locally of finite type over~$S$
    with affine stabilizers and separated diagonal.

    There are the induced morphisms
    \begin{equation}
        \label{eq-grad-filt-maps}
        \begin{tikzcd}[column sep=2em]
            \calGrad (\mathcal{X})
            \ar[r, shift right, "s"']
            \ar[rr, start anchor=north east, end anchor=north west, bend left=20, "u"]
            &
            \calFilt (\mathcal{X})
            \ar[l, shift right, "q"']
            \ar[r, "p"]
            &
            \mathcal{X} \ ,
        \end{tikzcd}
    \end{equation}
    where $p, q, s, u$ are induced by the open immersion
    $1 \colon S \to \mathbb{A}^1 / \mathbb{G}_{\mathrm{m}}$,
    the closed immersion
    $0 \colon \mathrm{B} \mathbb{G}_{\mathrm{m}} \to \mathbb{A}^1 / \mathbb{G}_{\mathrm{m}}$,
    the projection
    $\mathbb{A}^1 / \mathbb{G}_{\mathrm{m}} \to \mathrm{B} \mathbb{G}_{\mathrm{m}}$,
    and the morphism
    $S \to \mathrm{B} \mathbb{G}_{\mathrm{m}}$,
    respectively.
    The morphism~$q$ is an $\mathbb{A}^1$-deformation retract
    with section~$s$.

    A typical example is when $S = \operatorname{Spec} k$ for a field~$k$,
    and $\mathcal{X} = U / G$ for a $k$-scheme~$U$
    and a smooth affine algebraic group~$G$ over~$k$
    with a split maximal torus $T \subset G$.
    In this case, as in \cite[Theorem~1.4.8]{halpern-leistner-instability}, we have
    \begin{equation*}
        \calGrad (U / G) \simeq
        \coprod_{\lambda \in \Lambda_T / W}
        U_\lambda / L_\lambda \ ,
        \qquad
        \calFilt (U / G) \simeq
        \coprod_{\lambda \in \Lambda_T / W}
        U_\lambda^+ / P_\lambda \ ,
    \end{equation*}
    where $\Lambda_T = \mathrm{Hom} (\mathbb{G}_\mathrm{m}, T)$
    is the cocharacter lattice,
    $W = \mathrm{N}_G (T) / \mathrm{Z}_G (T)$ is the Weyl group,
    $L_\lambda, P_\lambda$ are the Levi and parabolic subgroups associated to~$\lambda$,
    and $U_\lambda, U_\lambda^+$
    are the fixed and attracting loci, respectively.
    See \emph{loc.~cit.} and \cite[\S 1]{drinfeld-gaitsgory-2014-braden} for details.

    Another class of examples is when $S = \operatorname{Spec} k$ for a field~$k$,
    and $\mathcal{X}$ is the moduli stack of objects in a $k$-linear abelian category.
    In this case, $\calGrad (\mathcal{X})$ and $\calFilt (\mathcal{X})$
    can often be identified with the moduli stacks of
    $\mathbb{Z}$-graded objects and $\mathbb{Z}$-indexed filtrations, respectively,
    in the abelian category.
\end{definition}

\begin{definition}[Quasi-compact graded points]
    \label{def-qc-graded-points}
    Following \cite[\S 6.2.2]{bu2025intrinsic},
    we introduce a mild condition on the stack~$\mathcal{X}$
    that will be used frequently below.

    Writing
    $\calGrad^n (\mathcal{X}) =
    \calMap (\mathrm{B} \mathbb{G}_{\mathrm{m}}^n, \mathcal{X})$
    for $n \geq 0$,
    we say that~$\mathcal{X}$ has \emph{quasi-compact graded points},
    if the forgetful morphism
    $\calGrad^n (\mathcal{X}) \to \mathcal{X}$
    is quasi-compact on every connected component of
    $\calGrad^n (\mathcal{X})$ for all~$n$.
    This is a very mild condition and is usually satisfied in practice.

    When $\mathcal{X}$ is quasi-compact and has quasi-compact graded points,
    all connected components of
    $\calGrad (\mathcal{X})$ and $\calFilt (\mathcal{X})$
    are quasi-compact,
    by \textcite[Lemma~1.3.8]{halpern-leistner-instability}.
\end{definition}

\begin{definition}[Component lattice]
    \label{def-cl}
    Now following \textcite{bu2025intrinsic},
    we define the \emph{component lattice} of~$\mathcal{X}$ as the functor
    \begin{align*}
        \mathrm{CL} (\mathcal{X}) \colon
        (\mathbb{Z} \textnormal{-} \mathsf{Mod}^\mathrm{ff})^\mathrm{op}
        & \longrightarrow
        \mathsf{Set} \ ,
        \\
        \Lambda
        & \longmapsto
        \uppi_0 ( \calGrad^\Lambda (\mathcal{X}) ) \ ,
    \end{align*}
    where $\mathbb{Z} \textnormal{-} \mathsf{Mod}^\mathrm{ff}$
    is the category of finitely generated free $\mathbb{Z}$-modules,
    and we write
    $\calGrad^\Lambda (\mathcal{X}) = \calMap (\mathrm{B} T_\Lambda, \mathcal{X})$
    for the split torus $T_\Lambda$ over~$S$
    whose cocharacter lattice is~$\Lambda$.

    Such a functor is called a \emph{formal lattice}.
    We see a formal lattice as glued from objects
    $\mathbb{Z}^n \in \mathbb{Z} \textnormal{-} \mathsf{Mod}^\mathrm{ff}$
    along morphisms in $\mathbb{Z} \textnormal{-} \mathsf{Mod}^\mathrm{ff}$.
    In particular, we define the underlying set
    \begin{equation*}
        | \mathrm{CL} (\mathcal{X}) | =
        \mathrm{CL} (\mathcal{X}) (\mathbb{Z}) \simeq
        \uppi_0 ( \calGrad (\mathcal{X}) ) \ .
    \end{equation*}
    When~$\mathcal{X}$ is quasi-compact and has quasi-compact graded points
    (\cref{def-qc-graded-points}),
    $\mathrm{CL} (\mathcal{X})$ can be glued from
    finitely many lattices $\mathbb{Z}^n$
    along finitely many maps, by \cite[Theorem~6.2.3]{bu2025intrinsic}.

    For example, when $\mathcal{X} = U / G$ as in \cref{def-grad-filt},
    if each $U_\lambda$ is connected
    (such as when~$U$ is a $G$-representation), then we have
    $\mathrm{CL} (U / G) \simeq \Lambda_T / W$
    as a formal lattice.

    For each $\lambda \in \mathrm{CL} (\mathcal{X})$
    (meaning $\lambda \in | \mathrm{CL} (\mathcal{X}) |$),
    we denote by
    \begin{equation*}
        \mathcal{X}_\lambda \subset \calGrad (\mathcal{X}) \ ,
        \qquad
        \mathcal{X}_\lambda^+ \subset \calFilt (\mathcal{X})
    \end{equation*}
    the connected components corresponding to~$\lambda$,
    noting that the maps $q, s$ in
    \cref{eq-grad-filt-maps}
    induce inverse isomorphisms on $\uppi_0$.
    We denote the restriction of the maps in
    \cref{eq-grad-filt-maps}
    to these components by
    \begin{equation*}
        \begin{tikzcd}[column sep=2em]
            \mathcal{X}_\lambda
            \ar[r, shift right, "s_\lambda"']
            \ar[rr, start anchor=north east, end anchor=north west, bend left=20, "u_\lambda"]
            &
            \mathcal{X}_\lambda^+
            \ar[l, shift right, "q_\lambda"']
            \ar[r, "p_\lambda"]
            &
            \mathcal{X} \ .
        \end{tikzcd}
    \end{equation*}

    We also use the notations $\mathcal{X}_\lambda$ and $\mathcal{X}_\lambda^+$
    in the following broader sense:
    Given a morphism $\mathcal{X} \to \mathcal{Y}$
    and $\lambda \in \mathrm{CL} (\mathcal{Y})$,
    we define $\mathcal{X}_\lambda \subset \calGrad (\mathcal{X})$
    and $\mathcal{X}_\lambda^+ \subset \calFilt (\mathcal{X})$
    as preimages of $\mathcal{Y}_\lambda$ and~$\mathcal{Y}_\lambda^+$
    under the induced morphisms,
    so they will be (possibly empty) disjoint unions of
    $\mathcal{X}_{\lambda'}$ and $\mathcal{X}_{\lambda'}^+$
    for $\lambda' \in \mathrm{CL} (\mathcal{X})$
    in the preimage of~$\lambda$.
    In particular, when $\mathcal{X} = U / G$ for an algebraic space~$U$
    and a group scheme~$G$ over a connected base,
    we may take $\mathcal{Y} = \mathrm{B} G$ and write
    $\mathcal{X}_\lambda$, $\mathcal{X}_\lambda^+$
    for cocharacters $\lambda \colon \mathbb{G}_\mathrm{m} \to G$,
    identifying~$\lambda$ with its induced element in $\mathrm{CL} (\mathrm{B} G)$.
\end{definition}

\begin{definition}[Faces and cones]
    \label{def-faces-cones}
    An $n$-dimensional \emph{face} of~$\mathrm{CL} (\mathcal{X})$
    is a map of formal lattices
    $\alpha \colon \mathbb{Z}^n \to \mathrm{CL} (\mathcal{X})$,
    or equivalently, an element
    $\alpha \in \mathrm{CL} (\mathcal{X}) (\mathbb{Z}^n)$.
    For a polyhedral cone
    $\Sigma \subset \mathbb{Z}^n$
    which is the intersection of an $n$-dimensional convex polyhedral cone in~$\mathbb{Q}^n$
    with~$\mathbb{Z}^n$,
    we also consider the restriction
    $\sigma = \alpha |_\Sigma \colon \Sigma \to \mathrm{CL} (\mathcal{X})$
    as a \emph{cone} of~$\mathrm{CL} (\mathcal{X})$.
    In this situation, we have a similar diagram
    \begin{equation*}
        \begin{tikzcd}[column sep=2em]
            \mathcal{X}_\alpha
            \ar[r, shift right, "s_\sigma"']
            \ar[rr, start anchor=north east, end anchor=north west, bend left=20, "u_\alpha"]
            &
            \mathcal{X}_\sigma^+
            \ar[l, shift right, "q_\sigma"']
            \ar[r, "p_\sigma"]
            &
            \mathcal{X} \ .
        \end{tikzcd}
    \end{equation*}
    See \cite{bu2025intrinsic} for the precise definitions.

    When $\mathcal{X}$ is quasi-compact and has quasi-compact graded points,
    we have a canonical isomorphism
    $\mathcal{X}_\lambda \simeq \mathcal{X}_\alpha$
    for any generic $\lambda \in \mathrm{CL} (\mathcal{X})$
    in the image of~$\alpha$,
    and we have
    $\mathcal{X}_\lambda^+ \simeq \mathcal{X}_\sigma^+$
    for any generic $\lambda \in \mathrm{CL} (\mathcal{X})$
    in the image of~$\sigma$
    provided that~$\sigma$ is contained in a chamber in
    a certain hyperplane arrangement
    \cite[\S 5.3.2]{bu2025intrinsic}
    in the face spanned by it.
    For a larger cone~$\sigma$,
    one cannot in general identify~$\mathcal{X}_\sigma^+$
    with a component of $\calFilt (\mathcal{X})$.
\end{definition}

\begin{definition}[Hall category]
    \label{def-hall-cat}
    The faces and cones of~$\mathrm{CL} (\mathcal{X})$
    form a combinatorial object called the \emph{Hall category}
    as in \cite[\S\S 6.3.3 and 6.3.5]{bu2025intrinsic}.

    To define this category, we need the notion of \emph{special faces}
    as in \cite[\S 4.1.2]{bu2025intrinsic}.
    Roughly speaking,
    $| \mathrm{CL} (\mathcal{X}) |$ often carries a wall-chamber decomposition
    such that the stacks~$\mathcal{X}_\lambda$ and~$\mathcal{X}_\lambda^+$
    do not change when~$\lambda$ is moved inside a chamber.
    A special face is, in rough terms, a face that is either maximal,
    or an intersection of walls that cut out this decomposition.
    On each special face, there is a canonical hyperplane arrangement
    given by \emph{cotangent weights}
    as in \cite[\S 5.3.2]{bu2025intrinsic}.

    The category $\mathsf{Hall} (\mathcal{X})$ is defined
    when each special face has finitely many cotangent weights.
    It is defined as follows:

    \begin{itemize}
        \item
            Its objects are special faces of~$\mathrm{CL} (\mathcal{X})$.
        \item
            A morphism $\alpha \overset{\sigma}{\to} \beta$
            is an inclusion of the underlying lattice of~$\alpha$ to that of~$\beta$
            compatible with the maps to $\mathrm{CL} (\mathcal{X})$,
            together with a (closed) chamber~$\sigma$
            in the hyperplane arrangement in the underlying lattice of~$\beta$
            consisting of hyperplanes defined by cotangent weights
            that contain the image of~$\alpha$.
        \item
            The composition
            $\alpha \overset{\sigma}{\to} \beta \overset{\tau}{\to} \gamma$
            is given by the cone
            $\alpha \overset{\smash{\sigma \uparrow \tau}}{\to} \gamma$,
            where $\sigma \uparrow \tau$ is the chamber
            uniquely determined by the property that
            $\sigma \subset \sigma \uparrow \tau \subset \tau$.
    \end{itemize}
    The Hall category captures the combinatorics of
    all induction and restriction operations that one can do on~$\mathcal{X}$.
    Induction behaves like a pullback along a morphism in $\mathsf{Hall} (\mathcal{X})$,
    whereas restriction behaves like a pushforward.
    See \cref{prop-ind-res-associativity}
    for the precise meaning of this.
\end{definition}

\noindent
Automorphism groups in the Hall category are often related to Weyl groups.
For example, when
$\mathcal{X} = \mathrm{B} G$
for a smooth affine algebraic group~$G$ with a split maximal torus~$T$
over a field~$k$, the special face
$\Lambda_T \to \Lambda_T / W \simeq \mathrm{CL} (\mathcal{X})$
has automorphism group~$W$.

If $\mathcal{X}$ is quasi-compact and has quasi-compact graded points,
then $\mathsf{Hall} (\mathcal{X})$ is always defined,
and is moreover a finite category,
in that it is equivalent to a category with finitely many objects and morphisms.
In this case,
for any $\alpha \in \mathsf{Hall} (\mathcal{X})$,
we have
$\mathcal{X}_\alpha \simeq \mathcal{X}_\lambda$
for some $\lambda \in \mathrm{CL} (\mathcal{X})$,
and for any morphism
$\alpha \overset{\sigma}{\to} \beta$
in~$\mathsf{Hall} (\mathcal{X})$,
we have
$\mathcal{X}_\sigma^+ \simeq (\mathcal{X}_\alpha)_\lambda^+$
for some $\lambda \in \mathrm{CL} (\mathcal{X}_\alpha)$.

In this case, we also have the following
\emph{associativity theorem},
as in \cite[Theorem~6.3.8]{bu2025intrinsic}:

\begin{theorem}[Associativity]
    \label{thm-associativity}
    If $\mathcal{X}$ is quasi-compact and has quasi-compact graded points,
    the assignment
    $\alpha \mapsto \mathcal{X}_\alpha$,
    $\sigma \mapsto \mathcal{X}_\sigma^+$
    defines a functor from $\mathsf{Hall} (\mathcal{X})$
    to the category of spans of stacks over~$S$.
    In particular, for morphisms
    $\alpha \overset{\sigma}{\to} \beta \overset{\tau}{\to} \gamma$
    in $\mathsf{Hall} (\mathcal{X})$, we have a canonical diagram
    \begin{equation*}
        \begin{tikzcd}[column sep={4em, between origins}, row sep={2.5em, between origins}]
            &&
            \mathcal{X}_{\sigma \uparrow \tau}^+
            \ar[dl] \ar[dr]
            \ar[ddll, bend right=20, shift right, "q_{\sigma \uparrow \tau}"']
            \ar[ddrr, bend left=20, shift left, "p_{\sigma \uparrow \tau}"]
            \ar[dd, phantom, pos=.2, "\ulcorner" rotate=-45]
            &&
            \\
            &
            \mathcal{X}_\tau^+
            \ar[dl, "q_\tau"] \ar[dr, "p_\tau"']
            &&
            \mathcal{X}_\sigma^+
            \ar[dl, "q_\sigma"] \ar[dr, "p_\sigma"']
            \\
            \mathcal{X}_\gamma && \mathcal{X}_\beta && \mathcal{X}_\alpha \rlap{ ,}
        \end{tikzcd}
    \end{equation*}
    where the middle square is cartesian.
\end{theorem}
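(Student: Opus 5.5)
The plan is to reduce everything to a comparison of mapping stacks out of two-dimensional quotients of affine space, following \textcite{halpern-leistner-instability} and \cite{bu2025intrinsic}. Functoriality on objects and identities is formal. The content of the theorem is the cartesian square, together with compatibility of the resulting identification with the composition law $\sigma \uparrow \tau$. So I would concentrate on constructing, for composable $\alpha \overset{\sigma}{\to} \beta \overset{\tau}{\to} \gamma$, a canonical isomorphism
\[
  \mathcal{X}^+_{\sigma \uparrow \tau} \simeq \mathcal{X}^+_\tau \times_{\mathcal{X}_\beta} \mathcal{X}^+_\sigma
\]
over $\mathcal{X}_\gamma \times \mathcal{X}_\alpha$. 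Associativity of the span composition then follows from uniqueness of the chamber $\sigma\uparrow\tau$ and the canonicity of the isomorphisms.

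First, since $\mathcal{X}$ is quasi-compact with quasi-compact graded points, I would use the remarks after \cref{def-hall-cat} to replace the cones by generic cocharacters. Choose a generic $\lambda \in \mathrm{CL}(\mathcal{X}_\alpha)$ in the interior of $\sigma$, so that $\mathcal{X}_\sigma^+ \simeq (\mathcal{X}_\alpha)^+_\lambda$ and $\mathcal{X}_\beta \simeq (\mathcal{X}_\alpha)_\lambda$. Then choose $\mu$ generic in $\tau$, viewed on $\mathcal{X}_\beta$. For $0 < \varepsilon \ll 1$ (after clearing denominators), the cocharacter $\lambda + \varepsilon \mu$ lies in the interior of the chamber $\sigma \uparrow \tau$, because this chamber is characterized by $\sigma \subset \sigma\uparrow\tau \subset \tau$. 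Hence $\mathcal{X}^+_{\sigma\uparrow\tau} \simeq (\mathcal{X}_\alpha)^+_{\lambda+\varepsilon\mu}$. The finiteness of the cotangent weights on special faces is what makes such an $\varepsilon$ exist, since only finitely many hyperplanes have to be avoided.

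The core step is a two-parameter statement. Consider $\mathcal{Z} = \calMap(\mathbb{A}^2/\mathbb{G}_\mathrm{m}^2, \mathcal{X}_\alpha)$ on the component determined by $(\lambda,\mu)$. I would show that restriction to the two coordinate axes identifies $\mathcal{Z}$ with the fibre product $\calFilt((\mathcal{X}_\alpha)_\lambda)_\mu \times_{(\mathcal{X}_\alpha)_\lambda} (\mathcal{X}_\alpha)^+_\lambda$. The argument would use $\calMap(\mathbb{A}^2/\mathbb{G}_\mathrm{m}^2,-) = \calFilt(\calFilt(-))$, the commutation $\calFilt\calGrad \simeq \calGrad\calFilt$, and the fact that $q$ is an $\mathbb{A}^1$-deformation retract with section $s$. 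Separately, I would show that pulling back along the diagonal-type map $\mathbb{A}^1/\mathbb{G}_\mathrm{m} \to \mathbb{A}^2/\mathbb{G}_\mathrm{m}^2$, given by $t \mapsto (t, t^\varepsilon)$ and restricted along the cocharacter $(1,\varepsilon)$, induces an isomorphism $\mathcal{Z} \to (\mathcal{X}_\alpha)^+_{\lambda+\varepsilon\mu}$. I would check this étale locally: by the local structure theorem for stacks with affine stabilizers, reduce to $U/G$ with $G$ reductive (or work on an atlas of the form $\operatorname{Spec} A / T$). There the statement becomes the equality of attracting loci $U^+_{\lambda+\varepsilon\mu} = (U^+_\lambda)\cap \{\text{points whose } \lambda\text{-limit is } \mu\text{-attracted}\}$, and of parabolics $P_{\lambda+\varepsilon\mu} = P_\mu(L_\lambda)\ltimes U_{P_\lambda}$. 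Both follow from a weight-by-weight check: a weight $w$ is nonnegative on $\lambda+\varepsilon\mu$ iff $\langle w,\lambda\rangle>0$, or $\langle w,\lambda\rangle=0$ and $\langle w,\mu\rangle\geq 0$, once $\varepsilon$ is small relative to the finitely many weights.

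I expect the main obstacle to be this last identification. It requires globalizing the weight argument and ensuring that the relevant weights are uniformly finite, which is where quasi-compactness and quasi-compact graded points are used. It also requires making the isomorphism independent of the choices of $\lambda$, $\mu$ and $\varepsilon$, so that it is canonical and compatible with further compositions. For independence, I would argue that inside a chamber the stacks $\mathcal{X}^+_\lambda$ are locally constant in $\lambda$, by \cite[\S 5.3.2]{bu2025intrinsic}, and that the comparison maps are compatible with these identifications. This gives the canonical diagram and the functor to spans.
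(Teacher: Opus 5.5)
The paper does not prove this statement; it quotes it from \cite[Theorem~6.3.8]{bu2025intrinsic}, so your argument has to stand on its own. Your reduction to generic $\lambda \in \sigma$ and $\mu \in \tau$, and your observation that $\lambda + \varepsilon\mu$ lies in the interior of $\sigma\uparrow\tau$, are fine.

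\textbf{The gap: the choice of $\mu$.} With $\mu$ only generic in $\tau$, the roof $\mathcal{Z} = \calFilt^2(\mathcal{X}_\alpha)_{(\lambda,\mu)}$ is isomorphic neither to $\mathcal{X}^+_\tau \times_{\mathcal{X}_\beta} \mathcal{X}^+_\sigma$ nor to $(\mathcal{X}_\alpha)^+_{\lambda+\varepsilon\mu}$.
\begin{itemize}
\item In a local model, $\mathcal{Z} = (U^+_\lambda \cap U^+_\mu)/(P_\lambda \cap P_\mu)$ imposes the quadrant condition on weights: $w(\lambda) \geq 0$ and $w(\mu) \geq 0$.
\item Both targets impose instead the lexicographic condition you wrote down: $w(\lambda) > 0$, or $w(\lambda) = 0$ and $w(\mu) \geq 0$. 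Your formulas for $U^+_{\lambda+\varepsilon\mu}$ and $P_{\lambda+\varepsilon\mu} = P_\mu(L_\lambda) \ltimes U_{P_\lambda}$ describe the fibre product, not $\mathcal{Z}$.
\item The two conditions differ whenever some weight is positive on $\lambda$ and negative on $\mu$. Nothing excludes this, because the walls of $\tau$ come only from weights vanishing on $F_\beta$.
\end{itemize}

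\textbf{A counterexample.} Take $\mathcal{X} = \mathrm{B}\mathrm{GL}_2$, with $\alpha$ the central face, $\beta$ the maximal torus face, $\sigma$ the dominant chamber, and $\tau = \mathrm{id}_\beta$. Then $\tau$ is the whole lattice and $\sigma\uparrow\tau = \sigma$, so $\lambda = (1,0)$ and $\mu = (0,1)$ are legitimate choices. One gets $\mathcal{Z} = \mathrm{B}(B \cap B^-) = \mathrm{B}T$. On the other hand, $\mathcal{X}^+_\tau \times_{\mathcal{X}_\beta} \mathcal{X}^+_\sigma = \mathrm{B}B = \mathrm{B}P_{(1,\varepsilon)}$.

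The same happens with a non-identity $\tau$. In $\mathrm{B}\mathrm{GL}_3$, take $\beta$ the face of the centre of the Levi $\mathrm{GL}_2 \times \mathrm{GL}_1$, $\gamma$ the maximal torus, $\lambda = (1,1,0)$ and $\mu = (1,0,5)$. Then $P_\lambda \cap P_\mu$ is a proper subgroup of the Borel $B = P_{\lambda+\varepsilon\mu}$. So both your identification via restriction to the axes and your identification via pullback along $t \mapsto (t, t^\varepsilon)$ fail, even though their intended composite is true.

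\textbf{The fix.} Choose $\mu$ generic in $\sigma\uparrow\tau$ rather than merely in $\tau$. Equivalently, replace $\mu$ by $\mu + N\lambda$ with $N \gg 0$; this stays generic in $\tau$ because $\tau$ is stable under translation by $F_\beta$. Then:
\begin{itemize}
\item the cone spanned by $\lambda$ and $\mu$ lies in $\sigma\uparrow\tau$;
\item every relevant weight that is positive on $\lambda$ is positive on $\mu$, so the quadrant and lexicographic conditions agree;
\item your second identification becomes the chamber-constancy statement recalled in \cref{def-faces-cones}, with no smallness of $\varepsilon$ needed;
\item your first identification becomes a genuine isomorphism, with local content exactly this agreement of conditions.
\end{itemize}

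\textbf{Two further points.} First, the map you need in the first identification is restriction to $\{0\} \times \mathbb{A}^1$ and $\mathbb{A}^1 \times \{1\}$, glued at $(0,1)$. Restriction to the two coordinate axes is instead the map $h$ of \cref{lem-affine-bundle}, which is a non-trivial affine bundle.

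Second, the theorem's hypotheses give no good moduli space and no linear reductivity of stabilizers, so the étale local structure theorem you invoke is unavailable. Moreover, an étale chart of $\mathcal{X}$ only covers $\calFilt(\mathcal{X})$ if it is compatible with limits of filtrations. This paper makes such reductions only étale locally over a good or adequate moduli space. In the generality stated, your local weight computation must therefore be globalized by a different mechanism.
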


\begin{definition}[Norms on graded points]
    \label{def-norm}
    Following \textcite[Definition~4.1.12]{halpern-leistner-instability},
    define a (\emph{quadratic}) \emph{norm on graded points} on~$\mathcal{X}$,
    or a (\emph{quadratic}) \emph{norm} on~$\mathrm{CL} (\mathcal{X})$,
    to be a function
    \begin{equation*}
        \nu \colon |\mathrm{CL} (\mathcal{X}) \otimes \mathbb{R}|
        \longrightarrow \mathbb{R}_{\geq 0} \ ,
    \end{equation*}
    where
    $\mathrm{CL} (\mathcal{X}) \otimes \mathbb{R}$
    is defined as in \cite[\S 2.1.8]{bu2025intrinsic},
    such that for any face
    $\alpha \colon \mathbb{R}^n \to \mathrm{CL} (\mathcal{X}) \otimes \mathbb{R}$
    that does not factor through $\mathbb{R}^{n-1}$,
    the pullback $\alpha^* (\nu) = \nu \circ \alpha$ is a
    (quadratic) norm on~$\mathbb{R}^n$.
\end{definition}

\noindent
Quadratic norms on graded points exist on many stacks of interest.
See \cite[Examples~4.1.13 and~4.1.17, \S 6.4]{halpern-leistner-instability}.

The existence of a norm on graded points
implies having quasi-compact graded points in the sense of
\cref{def-qc-graded-points},
by \textcite[Proposition~3.8.2]{halpern-leistner-instability}.

Given a quadratic norm on graded points,
for each geometric point $x \in \mathcal{X} (\bar{k})$
with stabilizer group~$G_x$,
the norm restricts to a Weyl-invariant inner product
on the real cocharacter lattice $\Lambda_T \otimes \mathbb{R}$
of the maximal torus $T \subset G_x$.
For example, if $G_x$ is simple,
then such a Weyl-invariant inner product is unique up to scaling.

Now, for two elements $\lambda, \mu \in \mathrm{CL} (\mathcal{X})$,
we would like to make sense of their pairing
under a quadratic norm on $\mathrm{CL} (\mathcal{X})$.
In general, since faces in $\mathrm{CL} (\mathcal{X})$
can have non-trivial automorphism groups,
if both $\lambda$ and~$\mu$ lie in such a face,
there may be multiple relative positions of~$\lambda$ and~$\mu$
that can give rise to different pairings.
This idea can be formulated as follows.

\begin{definition}[Relative positions]
    \label{def-relative-position}
    For any $\lambda, \mu \in \mathrm{CL} (\mathcal{X})$,
    define a \emph{relative position} of~$\lambda$ and~$\mu$
    as a morphism of formal lattices
    \begin{equation*}
        \alpha \colon \mathbb{Z}^2 \longrightarrow \mathrm{CL} (\mathcal{X})
    \end{equation*}
    such that
    $\alpha (1, 0) = \lambda$ and
    $\alpha (0, 1) = \mu$.

    By the discussion in \cite[\S 3.2.7]{bu2025intrinsic},
    such relative positions are in bijection with preimages~$\mu_i$
    of~$\mu$ under the induced map
    $\mathrm{CL} (\mathcal{X}_\lambda) \to \mathrm{CL} (\mathcal{X})$,
    and also with preimages~$\lambda_i$
    of~$\lambda$ under the induced map
    $\mathrm{CL} (\mathcal{X}_\mu) \to \mathrm{CL} (\mathcal{X})$.

    In particular, given a quadratic norm~$\nu$ on~$\mathrm{CL} (\mathcal{X})$,
    and given a relative position corresponding to
    the preimages~$\mu_i$ and~$\lambda_i$ as above, we write
    \begin{equation*}
        \langle \lambda, \mu_i \rangle =
        \langle \lambda_i, \mu \rangle =
        \alpha^* (\nu) ((1, 0), (0, 1)) \in \mathbb{R} \ ,
    \end{equation*}
    where we see the quadratic function
    $\alpha^* (\nu) \colon \mathbb{Z}^2 \to \mathbb{R}_{\geq 0}$
    as a symmetric bilinear form.

    If~$\mathcal{X}$ is quasi-compact,
    then so are $\mathcal{X}_\lambda$ and~$\mathcal{X}_\mu$
    by the existence of the norm and by
    \cite[Proposition~3.8.2]{halpern-leistner-instability},
    and there are finitely many preimages~$\lambda_i$ and~$\mu_i$ as above by
    \cite[Lemma~3.3.12 and Theorem~6.2.3]{bu2025intrinsic},
    so the set of relative positions is finite.
\end{definition}

\subsection{Six-functor formalisms}
\label{subsec-six-functors}

In the main part of this paper,
we will work abstractly with six-functor formalisms in the sense of
\textcite{ScholzeSixFunctor}.
We list here the extra assumptions for our six-functor formalisms,
and we give some examples afterwards.

\begin{assumption}
    \label{assumption-six-functors}
    Let~$S$ be a noetherian algebraic space,
    and let $\mathsf{St}_S$ be the $2$-category of
    algebraic stacks of finite type over~$S$ with affine stabilizers.
    Suppose we have a six-functor formalism
    \begin{equation*}
        \mathcal{X} \in \mathsf{St}_S
        \quad \longmapsto \quad
        \mathcal{D} (\mathcal{X}) \in (\text{presentable stable $\infty$-categories})
    \end{equation*}
    in the sense of \textcite{ScholzeSixFunctor},
    where the functors $f^*$, $f_*$, $f^!$, and $f_!$ are defined for all morphisms
    $f \colon \mathcal{X} \to \mathcal{Y}$ in $\mathsf{St}_S$.

    For each $\mathcal{X} \in \mathsf{St}_S$,
    we have the unit object
    $\mathbb{1}_{\mathcal{X}} \in \mathcal{D} (\mathcal{X})$
    and the dualizing object
    $\omega_{\mathcal{X}} = \pi^! (\mathbb{1}_S) \in \mathcal{D} (\mathcal{X})$,
    where $\pi \colon \mathcal{X} \to S$ is the projection.
    We have the Verdier dual functor
    \begin{equation*}
        \mathbb{D}_{\mathcal{X}} =
        \calHom_{\mathcal{X}} (-, \omega_{\mathcal{X}}) \colon
        \mathcal{D} (\mathcal{X}) \longrightarrow \mathcal{D} (\mathcal{X})^{\mathrm{op}} \ ,
    \end{equation*}
    which comes with natural isomorphisms
    $\mathbb{D}_{\mathcal{X}} \, f^* \simeq
    f^! \, \mathbb{D}_{\mathcal{Y}}$ and
    $\mathbb{D}_{\mathcal{Y}} \, f_! \simeq
    f_* \, \mathbb{D}_{\mathcal{X}}$
    for all morphisms $f \colon \mathcal{X} \to \mathcal{Y}$ in $\mathsf{St}_S$,
    as a formal consequence of the six-functor formalism.

    We further assume the following extra data and properties:

    \begin{enumerate}
        \item
            \label{item-assumption-smooth-descent}
            (\emph{Smooth descent})
            $\mathcal{D}$ satisfies smooth descent.

        \item
            \label{item-assumption-a1-invariance}
            (\emph{$\mathbb{A}^1$-invariance})
            For any $\mathcal{X} \in \mathsf{St}_S$,
            let $p \colon \mathbb{A}^1_{\mathcal{X}} \to \mathcal{X}$
            be the projection.
            Then there exists an isomorphism
            given by the adjunction unit
            \begin{equation*}
                \mathrm{id} \simeq p_* \, p^* \ .
            \end{equation*}

        \item
            (\emph{Proper pushforward})
            \label{item-assumption-proper-pushforward}
            For any representable proper morphism~$f$,
            there exists an isomorphism
            \begin{equation*}
                f_* \simeq f_! \ .
            \end{equation*}

        \item
            \label{item-assumption-smooth-pullback}
            (\emph{Smooth pullback})
            For any smooth morphism~$f$ of relative dimension~$n \in \mathbb{Z}$,
            there exists an isomorphism
            \begin{equation*}
                f^! \simeq f^* \langle n \rangle \ .
            \end{equation*}
            Here, we require a distinguished invertible object
            $\mathbb{T} \in \mathcal{D} (S)$,
            and define the \emph{Tate twist} $\langle n \rangle$
            as tensoring with the pullback of~$\mathbb{T}^{\otimes n}$.

        \item
            (\emph{Localization})
            \label{item-assumption-localization}
            For any closed immersion $i \colon \mathcal{Z} \to \mathcal{X}$
            with complement $j \colon \mathcal{U} \to \mathcal{X}$,
            there exists an exact triangle
            \begin{equation*}
                i_* \, i^! \longrightarrow
                \mathrm{id} \longrightarrow
                j_* \, j^! \ ,
            \end{equation*}
            where the morphisms are adjunction units and counits using the identifications
            $i_* \simeq i_!$ and $j^! \simeq j^*$.

        \item
            \label{item-assumption-finite-subcategories}
            (\emph{Finite subcategories})
            Optionally, we assume that for each $\mathcal{X} \in \mathsf{St}_S$,
            there is a stable full subcategory
            \begin{equation*}
                \mathcal{D}_\mathrm{c} (\mathcal{X}) \subset \mathcal{D} (\mathcal{X}) \ ,
            \end{equation*}
            closed under Tate twists,
            such that when~$\mathcal{X}$ is smooth and satisfies
            \cref{assumption-stratification} below,
            it is preserved by all induction and restriction functors
            in \cref{def-ind-res} below.
    \end{enumerate}
    For simplicity,
    we do not require compatibility
    between the isomorphisms in
    \cref{item-assumption-proper-pushforward}
    and \cref{item-assumption-smooth-pullback}
    for different~$f$,
    and between the triangles in \cref{item-assumption-localization},
    only requiring their existence.
    A more natural definition might require
    a canonical choice of such isomorphisms and triangles
    satisfying compatibility with higher homotopy coherence.
\end{assumption}

\begin{definition}[Induction and restriction]
    \label{def-ind-res}
    Under \cref{assumption-six-functors},
    let~$\mathcal{X} \in \mathsf{St}_S$,
    and let $\lambda \in \mathrm{CL} (\mathcal{X})$.
    Suppose that~$\mathcal{X}$ has quasi-compact graded points,
    so that $\mathcal{X}_\lambda$ and $\mathcal{X}_\lambda^+$ are quasi-compact.
    Consider the diagram
    \begin{equation*}
        \mathcal{X}_\lambda
        \overset{q_\lambda}{\longleftarrow}
        \mathcal{X}_\lambda^+
        \overset{p_\lambda}{\longrightarrow}
        \mathcal{X}
    \end{equation*}
    as in \cref{def-cl}.
    We define the parabolic induction and restriction functors
    \begin{alignat*}{2}
        \mathrm{Ind}_\lambda
        & =
        (p_\lambda)_! \, q_\lambda^* \colon
        & \mathcal{D} (\mathcal{X}_\lambda)
        & \longrightarrow
        \mathcal{D} (\mathcal{X}) \ ,
        \\
        \mathrm{Ind}'_\lambda
        & =
        (p_\lambda)_* \, q_\lambda^! \colon
        & \mathcal{D} (\mathcal{X}_\lambda)
        & \longrightarrow
        \mathcal{D} (\mathcal{X}) \ ,
        \\
        \mathrm{Res}_\lambda
        & =
        (q_\lambda)_! \, p_\lambda^* \colon
        & \mathcal{D} (\mathcal{X})
        & \longrightarrow
        \mathcal{D} (\mathcal{X}_\lambda) \ ,
        \\
        \mathrm{Res}'_\lambda
        & =
        (q_\lambda)_* \, p_\lambda^! \colon
        & \mathcal{D} (\mathcal{X})
        & \longrightarrow
        \mathcal{D} (\mathcal{X}_\lambda) \ .
    \end{alignat*}
    We have by definition the adjunctions
    $\mathrm{Ind}_\lambda \dashv \mathrm{Res}'_\lambda$
    and
    $\mathrm{Res}_\lambda \dashv \mathrm{Ind}'_\lambda$.

    Similarly, for any morphism
    $\alpha \overset{\sigma}{\to} \beta$
    in $\mathsf{Hall} (\mathcal{X})$,
    we have the diagram
    \begin{equation*}
        \mathcal{X}_\beta
        \overset{q_\sigma}{\longleftarrow}
        \mathcal{X}_\sigma^+
        \overset{p_\sigma}{\longrightarrow}
        \mathcal{X}_\alpha
    \end{equation*}
    as in \cref{def-hall-cat},
    and we define the parabolic induction and restriction functors
    \begin{alignat*}{2}
        \mathrm{Ind}_\sigma
        & =
        (p_\sigma)_! \, q_\sigma^* \colon
        & \mathcal{D} (\mathcal{X}_\beta)
        & \longrightarrow
        \mathcal{D} (\mathcal{X}_\alpha) \ ,
        \\
        \mathrm{Ind}'_\sigma
        & =
        (p_\sigma)_* \, q_\sigma^! \colon
        & \mathcal{D} (\mathcal{X}_\beta)
        & \longrightarrow
        \mathcal{D} (\mathcal{X}_\alpha) \ ,
        \\
        \mathrm{Res}_\sigma
        & =
        (q_\sigma)_! \, p_\sigma^* \colon
        & \mathcal{D} (\mathcal{X}_\alpha)
        & \longrightarrow
        \mathcal{D} (\mathcal{X}_\beta) \ ,
        \\
        \mathrm{Res}'_\sigma
        & =
        (q_\sigma)_* \, p_\sigma^! \colon
        & \mathcal{D} (\mathcal{X}_\alpha)
        & \longrightarrow
        \mathcal{D} (\mathcal{X}_\beta) \ .
    \end{alignat*}
    We have the adjunctions
    $\mathrm{Ind}_\sigma \dashv \mathrm{Res}'_\sigma$
    and
    $\mathrm{Res}_\sigma \dashv \mathrm{Ind}'_\sigma$.

    In the special case when $\mathcal{X}$ is smooth and \emph{$\Theta$-reductive}
    in the sense of \cite[Definition~3.10]{alper-halpern-leistner-heinloth-2023},
    which we will assume in the main part of this paper,
    the morphisms $p_\lambda$ and $p_\sigma$ are proper,
    and $q_\lambda$ and $q_\sigma$ are smooth
    for any~$\lambda$ and~$\sigma$,
    so that by the axioms \cref{item-assumption-proper-pushforward,,item-assumption-smooth-pullback}, we have
    \begin{equation}
        \label{eq-ind-ind-prime}
        \mathrm{Ind}'_\lambda \simeq
        \mathrm{Ind}_\lambda \langle \dim q_\lambda \rangle \ ,
        \qquad
        \mathrm{Ind}'_\sigma \simeq
        \mathrm{Ind}_\sigma \langle \dim q_\sigma \rangle \ .
    \end{equation}
\end{definition}

\begin{example}[Constructible sheaves]
    \label{eg-constructible-sheaves}
    Take $S = \operatorname{Spec} \mathbb{C}$,
    and let~$\Lambda$ be a field.
    For $\mathcal{X} \in \mathsf{St}_{\mathbb{C}}$, let
    \begin{equation*}
        \mathcal{D} (\mathcal{X}) =
        \mathsf{Shv} (\mathcal{X}; \Lambda) \ ,
        \qquad
        \mathcal{D}_\mathrm{c} (\mathcal{X}) =
        \mathsf{Shv}_{\mathrm{con}} (\mathcal{X}; \Lambda)
    \end{equation*}
    be the categories of ind-constructible and
    (bounded) constructible sheaves
    on~$\mathcal{X}$, respectively;
    the former is defined by taking the ind-completion of latter on schemes,
    then passing to stacks via smooth descent.
    The Tate object is $\mathbb{T} = \Lambda [2]$.
    See \textcite[\S A.3]{sandvik};
    see also \textcite[Appendices E--G]{agkrrv}.

    Note that although
    $\mathsf{Shv}_\mathrm{cpt} (\mathcal{X})
    \subset \mathsf{Shv}_{\mathrm{con}} (\mathcal{X})$,
    where $(-)_\mathrm{cpt}$ denotes the subcategory of compact objects,
    the two categories are not the same
    when~$\mathcal{X}$ admits non-unipotent stabilizers
    (cf.~\cite[\S 10.2.1]{drinfeld-gaitsgory-2013-finiteness}).
    For example,
    the constant sheaf on~$\mathrm{B} \mathbb{G}_\mathrm{m}$
    is constructible but not compact.
\end{example}

\begin{example}[Mixed Hodge modules]
    \label{eg-mhm}
    Take again $S = \operatorname{Spec} \mathbb{C}$,
    and for each $\mathcal{X} \in \mathsf{St}_{\mathbb{C}}$, let
    \begin{equation*}
        \mathcal{D} (\mathcal{X}) =
        \mathsf{MHM} (\mathcal{X}; \mathbb{Q}) \ ,
        \qquad
        \mathcal{D}_\mathrm{c} (\mathcal{X}) =
        \mathsf{MHM}_{\mathrm{con}} (\mathcal{X}; \mathbb{Q})
    \end{equation*}
    be the categories of all (ind-constructible) mixed Hodge modules and
    (bounded) constructible mixed Hodge modules on~$\mathcal{X}$,
    respectively, defined as in
    \textcite[Theorem~4.1.1]{tubach-2025-mixed}.
    The Tate object is
    $\mathbb{T} = \mathbb{Q} (1) [2]$.
    Similarly to
    \cref{eg-constructible-sheaves},
    the compact objects of $\mathsf{MHM} (\mathcal{X})$
    are contained in $\mathsf{MHM}_{\mathrm{con}} (\mathcal{X})$,
    but the two categories are different in general.
\end{example}

\begin{example}[$\ell$-adic sheaves]
    \label{eg-ell-adic-sheaves}
    Let $S$ be a quasi-excellent finite-dimensional scheme,
    and let~$\ell$ be a prime number which is invertible on~$S$.
    Let~$\Lambda$ be one of
    $\mathbb{Z}_\ell$, $\mathbb{Q}_\ell$, or~$\overline{\mathbb{Q}}_\ell$.
    For $\mathcal{X} \in \mathsf{St}_S$, let
    \begin{equation*}
        \mathcal{D} (\mathcal{X}) =
        \mathsf{Shv} (\mathcal{X}; \Lambda) \ ,
        \qquad
        \mathcal{D}_\mathrm{c} (\mathcal{X}) =
        \mathsf{Shv}_{\mathrm{con}} (\mathcal{X}; \Lambda)
    \end{equation*}
    be the categories of ind-constructible and (bounded) constructible
    étale sheaves on~$\mathcal{X}$ with coefficients in~$\Lambda$,
    respectively.
    There is a six-functor formalism of all étale sheaves constructed by
    \textcite{liu-zheng},
    and the ind-constructible and constructible subcategories are defined as in
    \textcite{hemo-richarz-scholbach-2023} for schemes,
    and via smooth descent for stacks.
    The proper pushforward and smooth pullback properties follow from
    \cite[Theorems~7.2.6 and~7.2.7]{liu-zheng}, with the twist
    $\mathbb{T} = \Lambda (1) [2]$.
    See also \textcite[Appendices~E--F]{agkrrv}
    for the case when $S = \operatorname{Spec} k$ for a field~$k$.
\end{example}

\begin{example}[Holonomic D-modules]
    \label{eg-holonomic-d-modules}
    Let $k$ be a field of characteristic~$0$.
    For $\mathcal{X} \in \mathsf{St}_k$, let
    \begin{equation*}
        \mathcal{D} (\mathcal{X}) =
        \mathsf{DMod}_{\textnormal{ind-hol}} (\mathcal{X}) \ ,
        \qquad
        \mathcal{D}_\mathrm{c} (\mathcal{X}) =
        \mathsf{DMod}_{\mathrm{hol}} (\mathcal{X})
    \end{equation*}
    be the categories of ind-holonomic and
    (bounded) holonomic D-modules on~$\mathcal{X}$,
    respectively,
    where the former is defined by taking the ind-completion of latter on schemes,
    then passing to stacks via smooth descent.
    See \textcite[Remark~8.38]{ScholzeSixFunctor}
    for the existence of this six-functor formalism on schemes,
    which then extends to stacks by \cite[Proposition~5.16]{ScholzeSixFunctor}.
    See, for example, \textcite[Proposition~2.2]{koppensteiner-2020-hochschild}
    for the proper pushforward and smooth pullback properties,
    where $\mathbb{T} = k [2]$.
    See also \textcite[Appendices~E--F]{agkrrv}.
\end{example}

\begin{example}[D-modules]
    \label{eg-d-modules}
    The following example does not fully satisfy
    \cref{assumption-six-functors},
    in that we do not always define $f_!$ and $f^*$,
    but those conditions that do not involve these functors are still satisfied.

    Let $k$ be a field of characteristic~$0$,
    and for $\mathcal{X} \in \mathsf{St}_k$, let
    \begin{equation*}
        \mathcal{D} (\mathcal{X}) =
        \mathsf{DMod} (\mathcal{X})
        \ , \qquad
        \mathcal{D}_\mathrm{c} (\mathcal{X}) =
        \mathsf{DMod}_{\mathrm{coh}} (\mathcal{X})
    \end{equation*}
    be the categories of all D-modules and
    (bounded) coherent D-modules on~$\mathcal{X}$, respectively.
    Then for a morphism $f$ in $\mathsf{St}_k$,
    $f_*$ and $f^!$ are always defined on $\mathcal{D} (\mathcal{X})$;
    we define~$f_!$ (resp.~$f^*$) as the left adjoint of
    $f^!$ (resp.~$f_*$) if it exists.
    If~$f$ is proper (resp.~smooth),
    then $f_!$ (resp.~$f^*$) is always defined and satisfies
    \cref{assumption-six-functors}~\cref{item-assumption-proper-pushforward}
    (resp.~\cref{item-assumption-smooth-pullback}),
    where the twist is $\mathbb{T} = k [2]$.

    All induction and restriction functors in
    \cref{def-ind-res}
    possibly except $\mathrm{Ind}$ are defined,
    where $\mathrm{Ind}'$ and $\mathrm{Res}'$
    only use the $!$-pull and the $*$-push,
    and $\mathrm{Res}$ exists by Braden's theorem,
    as we will discuss in \cref{def-braden-theorem} and below.
    If, moreover, $\mathcal{X}$ is smooth and $\Theta$-reductive,
    then $\mathrm{Ind}$ is also defined and satisfies
    \cref{eq-ind-ind-prime}.

    When $\mathcal{X}$ is smooth and $\Theta$-reductive,
    coherent D-modules are preserved by induction
    since it is a smooth pullback followed by a proper pushforward,
    and they are preserved by restrictions
    as we will show in \cref{lem-restriction-coherent}.
    See also \textcite[Corollary~3.9]{gunningham2018generalized}
    for the case when $\mathcal{X} = \mathfrak{g} / G$
    for a reductive group~$G$ and its adjoint representation~$\mathfrak{g}$.
\end{example}

\begin{definition}[Braden's theorem]
    \label{def-braden-theorem}
    In the situation of \cref{assumption-six-functors},
    let~$\mathcal{X} \in \mathsf{St}_S$ be a stack.
    We say that the six-functor formalism~$\mathcal{D}$
    satisfies \emph{Braden's theorem} for~$\mathcal{X}$,
    if for any morphism
    $\alpha \overset{\sigma}{\to} \beta$ in $\mathsf{Hall} (\mathcal{X})$,
    in the diagram
    \begin{equation*}
        \begin{tikzcd}[row sep={3em, between origins}, column sep={5em, between origins}]
            & {\mathcal{X}_{\sigma}^+}
            \ar[dl, shift right, "q_\sigma"']
            \ar[dr, "p_\sigma"]
            &
            \\
            \mathcal{X}_\beta
            \ar[ur, shift right, "\imath_\sigma"']
            \ar[dr, shift left, "\imath_{-\sigma}"]
            &&
            \mathcal{X}_\alpha
            \\
            & \mathcal{X}_{\sigma}^- \rlap{ ,}
            \ar[ul, shift left, "q_{-\sigma}"]
            \ar[ur, "p_{-\sigma}"']
        \end{tikzcd}
    \end{equation*}
    where $\mathcal{X}_\sigma^- = \mathcal{X}_{-\sigma}^+$,
    we have natural isomorphisms
    \begin{equation*}
        (q_{\sigma})_* \, p_{\sigma}^!
        \simeq (q_{-\sigma})_! \, p_{-\sigma}^* \ ,
        \qquad
        (q_\sigma)_* \simeq \imath_{\sigma}^* \ ,
        \qquad
        (q_\sigma)_! \simeq \imath_{\sigma}^! \ ,
    \end{equation*}
    where the first morphism is constructed from the six-functor formalism,%
    \footnote{
        See \textcite[Proof of Proposition~A.1]{kinjo2024cohomological}
        for the construction of this map in the setting of constructible sheaves,
        which can be adapted to other six-functor formalisms.
        Although they use the generalized base change transform constructed in
        \cite[\S 7.1.3]{kinjo2024cohomological},
        which requires
        \cref{assumption-six-functors}~\cref{item-assumption-smooth-pullback}
        for quasi-smooth morphisms,
        it is only applied to an étale morphism in our situation, by
        \cite[Lemma~A.2]{kinjo2024cohomological}.
    }
    and the other two are induced by the adjunction unit
    $\mathrm{id} \to (\imath_\sigma)_* \, \imath_\sigma^*$
    and the counit
    $(\imath_\sigma)_! \, \imath_\sigma^! \to \mathrm{id}$,
    respectively.
\end{definition}

\noindent
See \textcite{drinfeld-gaitsgory-2014-braden} for background on this,
and \textcite{braden-2003-hyperbolic} for the original reference.

For example, Braden's theorem holds for all stacks
$\mathcal{X} \in \mathsf{St}_k$ with $\operatorname{char} k = 0$
and all sheaf theories in
\cref{eg-constructible-sheaves,eg-mhm,eg-ell-adic-sheaves,eg-holonomic-d-modules}
by \textcite[Propositions~A.1 and~7.2, Remark~A.3]{kinjo2024cohomological}.

For all D-modules, as in \cref{eg-d-modules},
the $*$-pullback and the $!$-pushforward are in general only partially defined:
their images naturally land in the pro-category of D-modules,
as in \cite[\S 3]{drinfeld-gaitsgory-2014-braden}.
However, \textcite[Theorem~C.5.3]{drinfeld-gaitsgory-2015-compact}
together with the argument of \cite[Proposition~A.1]{kinjo2024cohomological}
implies that the ones appearing in \cref{def-braden-theorem}
are always fully defined,
and that all isomorphisms in \cref{def-braden-theorem}
hold for all stacks in $\mathsf{St}_k$.
In particular, the functors $\mathrm{Ind}'$, $\mathrm{Res}$, and $\mathrm{Res}'$
in \cref{def-ind-res}
are defined,
and for a smooth and $\Theta$-reductive stack,
$\mathrm{Ind}$ is also defined and satisfies
\cref{eq-ind-ind-prime}.

\begin{lemma}[Restriction preserves coherent D-modules]
    \label{lem-restriction-coherent}
    Let~$k$ be a field of characteristic~$0$,
    and let~$\mathcal{X}$ be a smooth, $\Theta$-reductive
    algebraic stack of finite type over~$k$
    with affine stabilizers and quasi-compact graded points.
    Then for any $\lambda \in \mathrm{CL} (\mathcal{X})$,
    the restriction functors
    \begin{equation*}
        \mathrm{Res}_\lambda \ ,
        \quad
        \mathrm{Res}'_\lambda \colon
        \quad
        \mathsf{DMod} (\mathcal{X}) \longrightarrow
        \mathsf{DMod} (\mathcal{X}_\lambda)
    \end{equation*}
    preserve coherent D-modules.
\end{lemma}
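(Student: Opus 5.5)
We use Braden's theorem (\cref{def-braden-theorem}) to rewrite both restriction functors as compositions in which every step visibly preserves coherence. The relevant isomorphisms, valid for all D-modules on stacks in $\mathsf{St}_k$ by the discussion after \cref{def-braden-theorem}, are
\begin{equation*}
    \mathrm{Res}'_\lambda = (q_\lambda)_* \, p_\lambda^!
    \simeq (q_{-\lambda})_! \, p_{-\lambda}^* = \mathrm{Res}_{-\lambda} \ ,
    \qquad
    (q_\lambda)_* \simeq \imath_\lambda^* \ ,
    \qquad
    (q_\lambda)_! \simeq \imath_\lambda^! \ .
\end{equation*}
These give
\begin{equation*}
    \mathrm{Res}'_\lambda \simeq \imath_\lambda^* \, p_\lambda^! \ ,
    \qquad
    \mathrm{Res}_\lambda \simeq \mathrm{Res}'_{-\lambda} \simeq \imath_{-\lambda}^* \, p_{-\lambda}^! \ .
\end{equation*}
Since $-\lambda \in \mathrm{CL}(\mathcal{X})$ satisfies the same hypotheses, it suffices to prove that $\mathrm{Res}'_\lambda$ preserves coherence for every~$\lambda$. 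We will also make sure that the identification $\mathrm{Res}_\lambda \simeq \mathrm{Res}'_{-\lambda}$ is the one provided by Braden's theorem applied to $-\lambda$, so that both functors are genuinely defined on $\mathsf{DMod}$.

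Next, we rewrite $\mathrm{Res}'_\lambda$ using the composite $u_\lambda = p_\lambda \circ \imath_\lambda \colon \mathcal{X}_\lambda \to \mathcal{X}$. Because $\mathcal{X}$ is smooth, $\mathcal{X}_\lambda$ and $\mathcal{X}_\lambda^+$ are smooth, the latter being an affine-space fibration over the former via $q_\lambda$. Hence $\imath_\lambda$ is a closed immersion of smooth stacks of some codimension~$c$, and it is a section of the smooth map~$q_\lambda$. We then check that, on D-modules whose restriction to $\mathcal{X}_\lambda^+$ is $\mathbb{G}_\mathrm{m}$-monodromic, the functor $\imath_\lambda^*$ agrees with $\imath_\lambda^!\langle c\rangle$. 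Since $(q_\lambda)_*$ equals $\imath_\lambda^*$ on all objects, the relevant objects $p_\lambda^!(M)$ are automatically of this kind. It follows that
\begin{equation*}
    \mathrm{Res}'_\lambda \simeq u_\lambda^! \langle c \rangle \quad \text{up to shift.}
\end{equation*}

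The step I expect to be the main obstacle is that $u_\lambda^!$ does not preserve coherence for arbitrary maps, and neither does $\imath^*$ for a closed immersion. So the argument cannot reduce to general functoriality; it must use the contracting $\mathbb{G}_\mathrm{m}$-action, i.e.\ the hyperbolic localization nature of the functor. My plan is to reduce to affine local models: by smooth descent, and étale-locally near points of $\mathcal{X}_\lambda$, where $\mathcal{X}$ looks like $V/G$ with $\lambda$ a cocharacter of~$G$ and the functors are compatible with smooth base change. Coherence is local in the smooth topology, so this reduction is legitimate. In the local model, $\imath^* = q_*$ for a $\mathbb{G}_\mathrm{m}$-equivariant D-module on an attracting affine bundle. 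One then shows that $q_*$ preserves coherence by identifying it with taking invariants and weight-zero parts of the monodromic structure, or equivalently with the Kashiwara-type description of pushforward along contracting actions used in Drinfeld--Gaitsgory and in \textcite[Corollary~3.9]{gunningham2018generalized}. If this direct identification turns out to be awkward, the fallback is to deduce the general case from the $\mathfrak{g}/G$ case of \textcite[Corollary~3.9]{gunningham2018generalized} by a local-structure argument, using that $\Theta$-reductivity makes $p_\lambda$ proper. Either way, properness of $p_\lambda$ together with finite type of all stacks involved ensures that coherence is not lost in the remaining steps.
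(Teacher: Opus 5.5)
Your reduction of $\mathrm{Res}_\lambda$ to $\mathrm{Res}'_{-\lambda}$ via Braden's theorem is fine, but the central step is wrong.

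\textbf{The identification $\mathrm{Res}'_\lambda \simeq u_\lambda^!$ is false.} You claim that $\imath_\lambda^* \simeq \imath_\lambda^!\langle c\rangle$ on the relevant objects. Combined with the Braden isomorphisms $(q_\lambda)_* \simeq \imath_\lambda^*$ and $(q_\lambda)_! \simeq \imath_\lambda^!$ that you quote, this would give $(q_\lambda)_* \simeq (q_\lambda)_!\langle c\rangle$. That already fails for $\mathcal{X} = \mathbb{A}^1/\mathbb{G}_\mathrm{m}$ with $\lambda = \mathrm{id}$. There $\mathcal{X}_\lambda^+ = \mathcal{X}$, $p_\lambda = \mathrm{id}$, and every object is of the form $p_\lambda^!(M)$. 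Let $j \colon \operatorname{Spec} k \simeq \mathbb{G}_\mathrm{m}/\mathbb{G}_\mathrm{m} \hookrightarrow \mathbb{A}^1/\mathbb{G}_\mathrm{m}$ be the open point. Then
\begin{itemize}
    \item $(q_\lambda)_*\, j_!\mathbb{1} \simeq \imath_\lambda^*\, j_!\mathbb{1} = 0$;
    \item $(q_\lambda)_!\, j_!\mathbb{1} \simeq (q_\lambda j)_!\mathbb{1} \neq 0$, since its pullback to a point is the compactly supported cohomology of $\mathbb{G}_\mathrm{m}$.
\end{itemize}
One side vanishes and the other does not, so no shift can fix this. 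Hyperbolic localization identifies the attracting $*$-pushforward with the \emph{repelling} $!$-pushforward, not with a shifted $!$-restriction to the fixed locus.

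Relatedly, $\imath_\lambda$ is not a closed immersion in general. For $\mathcal{X} = \mathrm{B}G$ it is $\mathrm{B}L_\lambda \to \mathrm{B}P_\lambda$, which is smooth with fibre $N_\lambda$.

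\textbf{The actual content of the lemma is left unproved.}
\begin{itemize}
    \item You give no argument that $(q_\lambda)_*$ preserves coherence on equivariant objects.
    \item The fallback to \textcite[Corollary~3.9]{gunningham2018generalized} does not apply. A smooth stack is étale locally $V/G$ for an arbitrary representation, not an adjoint quotient.
    \item Properness of $p_\lambda$ does not help either, because $p_\lambda^!$ does not preserve coherence, just as $!$-pullback along a closed immersion does not.
\end{itemize}
None of the individual factors of $\mathrm{Res}'_\lambda$ preserves coherence, so a factor-by-factor argument cannot work as sketched.

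\textbf{The missing idea is to argue through adjoints and compact objects.} By Braden's theorem and \cref{prop-adjunction-seq}, $\mathrm{Res}_\lambda$ and $\mathrm{Res}'_\lambda$ have left and right adjoints given by twisted induction functors. These are smooth pullbacks followed by proper pushforwards, hence continuous and of finite cohomological amplitude. A functor with a continuous right adjoint preserves compact objects. Adjoints of finite amplitude on both sides force the restriction functors to have finite amplitude too.

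Now apply \textcite[Lemma~3.5.3]{drinfeld-gaitsgory-2015-compact}: for coherent $E$ and any $n$ there is a compact $E' \to E$ with cone in $\mathsf{DMod}(\mathcal{X})^{<-n}$. For $n \gg 0$ this gives $\mathrm{Res}_\lambda(E) \simeq \tau^{\geq -m}\,\mathrm{Res}_\lambda(E')$. This is a truncation of a compact, hence coherent, object, and so is coherent. The same argument applies to $\mathrm{Res}'_\lambda$.
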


\begin{proof}
    First, note that the restriction functors preserve compact D-modules,
    defined as compact objects in $\mathsf{DMod} (-)$,
    and have finite cohomological amplitude,
    since by Braden's theorem and
    \cref{prop-adjunction-seq} below,
    they have adjoints on both sides given by induction functors,
    which are smooth pullbacks followed by proper pushforwards,
    and hence are continuous and have finite cohomological amplitude.
    Now, by \textcite[Lemma~3.5.3]{drinfeld-gaitsgory-2015-compact},
    for any coherent D-module~$E \in \mathsf{DMod}_{\mathrm{coh}} (\mathcal{X})$
    and any integer~$n$,
    there exists a compact D-module~$E' \in \mathsf{DMod}_{\mathrm{cpt}} (\mathcal{X})$
    and a morphism $E' \to E$ whose cone lies in
    $\mathsf{DMod} (\mathcal{X})^{< -n}$.
    For $n$ large enough, it follows that
    $\mathrm{Res}_\lambda (E) \simeq \tau^{\geq -m} \, \mathrm{Res}_\lambda (E')$
    for some~$m$, where $\mathrm{Res}_\lambda (E')$ is compact, hence coherent,
    and $\tau^{\geq -m}$ preserves coherence as in
    \cite[below Lemma~3.5.3]{drinfeld-gaitsgory-2015-compact},
    so that $\mathrm{Res}_\lambda (E)$ is coherent.
    The same argument applies to $\mathrm{Res}'_\lambda$.
\end{proof}

\noindent
Finally, we collect a standard result on $\mathbb{A}^1$-homotopy invariance
that will be used later.

\begin{lemma}
    \label{lem-a1-contract}
    Under \cref{assumption-six-functors},
    let $f \colon \mathcal{X} \to \mathcal{Y}$
    be a morphism in $\mathsf{St}_S$
    which is an $\mathbb{A}^1$-deformation retract.
    Then there is an isomorphism given by the adjunction unit
    \begin{equation*}
        \mathrm{id} \simeq f_* \, f^* \ .
    \end{equation*}
    In particular, if~$f$ is also smooth of relative dimension~$n$, then
    $f_* \, f^! \simeq \langle n \rangle$.
\end{lemma}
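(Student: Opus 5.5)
The plan is to prove the two claims in turn: first the isomorphism $\mathrm{id} \simeq f_* \, f^*$ via the $\mathbb{A}^1$-homotopy, then the twisted form $f_* \, f^! \simeq \langle n \rangle$ as a formal consequence.

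\textbf{Setup.} By definition, an $\mathbb{A}^1$-deformation retract comes with:
\begin{itemize}
    \item a section $s \colon \mathcal{Y} \to \mathcal{X}$ with $f \circ s \simeq \mathrm{id}_{\mathcal{Y}}$;
    \item a homotopy $h \colon \mathbb{A}^1_{\mathcal{X}} \to \mathcal{X}$ with $h|_{1} \simeq \mathrm{id}_{\mathcal{X}}$, $h|_{0} \simeq s \circ f$, and $f \circ h \simeq f \circ \mathrm{pr}$, where $\mathrm{pr} \colon \mathbb{A}^1_{\mathcal{X}} \to \mathcal{X}$ is the projection.
\end{itemize}

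\textbf{Step 1: reduce to $f^*$ being fully faithful.} The unit $\mathrm{id} \to f_* f^*$ is an isomorphism exactly when $f^*$ is fully faithful. Since $s^* f^* \simeq \mathrm{id}$, it suffices to show that $f^* s^* \simeq \mathrm{id}$ on the essential image of $f^*$, compatibly with the unit. Concretely, I would exhibit $f_* f^*$ as a retract of the identity and then show the composite the other way is also the identity.

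\textbf{Step 2: apply $\mathbb{A}^1$-invariance.} Let $i_0, i_1 \colon \mathcal{X} \to \mathbb{A}^1_{\mathcal{X}}$ be the zero and one sections. By \cref{assumption-six-functors}~\cref{item-assumption-a1-invariance}, $\mathrm{pr}^*$ is fully faithful. Since $\mathrm{pr} \circ i_t = \mathrm{id}$, each $i_t^*$ is inverse to $\mathrm{pr}^*$ on the image of $\mathrm{pr}^*$, so $i_0^* \simeq i_1^*$ there. For $F \in \mathcal{D}(\mathcal{Y})$ we have
\[
h^* f^* F \simeq (f h)^* F \simeq (f \,\mathrm{pr})^* F = \mathrm{pr}^* f^* F ,
\]
so $h^* f^* F$ lies in the image of $\mathrm{pr}^*$. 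Therefore
\[
f^* F \simeq i_1^* h^* f^* F \simeq i_0^* h^* f^* F \simeq f^* s^* f^* F .
\]

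\textbf{Step 3: compute Homs.} For $F, G \in \mathcal{D}(\mathcal{Y})$, I would show that
\[
\mathrm{Hom}(F, G) \to \mathrm{Hom}(f^* F, f^* G)
\]
is an isomorphism. It has a left inverse given by $s^*$, using $s^* f^* \simeq \mathrm{id}$. The composite $f^* s^*$ on $\mathrm{Hom}(f^* F, f^* G)$ agrees with the identity by the same homotopy argument applied to morphisms, using that $\mathrm{pr}^*$ is fully faithful on Hom-spaces. Hence $f^*$ is fully faithful, and the unit is an isomorphism.

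\textbf{Main obstacle.} The delicate point is showing these identifications are compatible with the unit map, rather than merely producing some abstract isomorphism. I would handle this by working with $\mathrm{Hom}$ functors as above, where naturality is automatic.

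\textbf{Step 4: the twisted statement.} If $f$ is also smooth of relative dimension $n$, then \cref{item-assumption-smooth-pullback} gives $f^! \simeq f^* \langle n \rangle$. Since the Tate twist is tensoring with the pullback of an invertible object, it commutes with $f_*$ by the projection formula for invertible objects. Hence
\[
f_* f^! \simeq (f_* f^*) \langle n \rangle \simeq \langle n \rangle ,
\]
using the first part.
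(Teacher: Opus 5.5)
Your proposal is correct and follows essentially the same route as the paper. You reduce to full faithfulness of $f^*$, use $s^*$ as a left inverse on Hom-spaces via $s^* f^* \simeq \mathrm{id}$, and use $h^* f^* \simeq \mathrm{pr}^* f^*$ with full faithfulness of $\mathrm{pr}^*$ to see that $i_1^* h^* \simeq \mathrm{id}$ and $i_0^* h^* \simeq f^* s^*$ induce the same map on $\mathrm{Hom}(f^* F, f^* G)$; your Step~4, which commutes $f_*$ past tensoring with a pulled-back invertible object, just makes explicit what the paper leaves as ``in particular''.
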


\begin{proof}
    It is enough to show that the left adjoint~$f^*$ is fully faithful.
    Let $p \colon \mathbb{A}^1 \times \mathcal{X} \to \mathcal{X}$
    be the projection,
    and let $h \colon \mathbb{A}^1 \times \mathcal{X} \to \mathcal{X}$
    be the deformation retract,
    so that there exists $s \colon \mathcal{Y} \to \mathcal{X}$ such that
    $f s \simeq \mathrm{id}_{\mathcal{Y}}$,
    $h i_0 \simeq s f$,
    $h i_1 \simeq \mathrm{id}_{\mathcal{X}}$,
    and $f h \simeq f p$,
    where $i_a \colon \mathcal{X} \to \mathbb{A}^1 \times \mathcal{X}$
    is the inclusion as $\{ a \} \times \mathcal{X}$.
    Now $s^* \, f^* \simeq \mathrm{id}$,
    and for $x, y \in \mathcal{D} (\mathcal{Y})$, consider the morphism
    \begin{align}
        \mathrm{Hom} (f^* \, x, f^* \, y)
        \overset{h^*}{\to} {}
        & \mathrm{Hom} (h^* \, f^* \, x, h^* \, f^* \, y)
        \notag \\ \simeq {}
        & \mathrm{Hom} (p^* \, f^* \, x, p^* \, f^* \, y)
        \notag \\ \simeq {}
        & \mathrm{Hom} (f^* \, x, f^* \, y) \ ,
        \label{eq-proof-a1-retract-invariance}
    \end{align}
    where the last step uses the full faithfulness of~$p^*$ by
    \cref{assumption-six-functors}~\cref{item-assumption-a1-invariance}.
    It follows that the last morphism agrees with both
    the ones induced by $i_0^*$ and $i_1^*$,
    so that the composition map in
    \cref{eq-proof-a1-retract-invariance} is
    $\mathrm{id} \simeq i_1^* \, h^* \simeq i_0^* \, h^* \simeq f^* \, s^*$
    on the hom-space.
    In particular, $s^*$ and~$f^*$ induce inverse isomorphisms
    $\mathrm{Hom} (x, y) \simeq \mathrm{Hom} (f^* \, x, f^* \, y)$,
    so that $f^*$ is fully faithful.
\end{proof}

%% file: steinberg.tex
\subsection{Definition and examples}

Throughout, let~$S$ be a locally noetherian algebraic space,
and let $\mathcal{X}$ be an algebraic stack locally of finite type over~$S$
with affine stabilizers and separated diagonal.

\begin{definition}[Steinberg stack]
    The \emph{Steinberg stack} of~$\mathcal{X}$ is defined as the fibre product
    \begin{equation*}
        \calSt (\mathcal{X}) =
        \calFilt (\mathcal{X})
        \underset{\mathcal{X}}{\times}
        \calFilt (\mathcal{X}) \ .
    \end{equation*}
    For $\lambda, \mu \in \mathrm{CL} (\mathcal{X})$,
    define the open and closed substack
    \begin{equation*}
        \calSt_{\lambda, \mu} =
        \calSt (\mathcal{X})_{\lambda, \mu} =
        \mathcal{X}_\lambda^+
        \underset{\mathcal{X}}{\times}
        \mathcal{X}_\mu^+
        \subset \calSt (\mathcal{X}) \ .
    \end{equation*}
    We have a commutative diagram
    \begin{equation*}
        \begin{tikzcd}[column sep={8em, between origins}]
            \calFilt^2 (\mathcal{X})_{\lambda, \mu}
            \ar[r] \ar[d, hook]
            \ar[dr, start anchor=-45, end anchor=135, pos=.2, phantom, "\ulcorner"]
            &
            \calSt_{\lambda, \mu}
            \ar[r] \ar[d, hook]
            \ar[dr, start anchor=-45, end anchor=135, pos=.2, phantom, "\ulcorner"]
            &
            \mathcal{X}_\lambda^+ \underset{S}{\times} \mathcal{X}_\mu^+
            \ar[d, hook]
            \\
            \calFilt^2 (\mathcal{X})
            \ar[r]
            &
            \calSt (\mathcal{X})
            \ar[r]
            &
            \calFilt (\mathcal{X}) \underset{S}{\times} \calFilt (\mathcal{X})
            \rlap{ ,}
        \end{tikzcd}
    \end{equation*}
    where the right vertical arrow is the inclusion of an open and closed substack,
    and the bottom left horizontal arrow is induced by the two maps
    $\mathbb{A}^1 / \mathbb{G}_\mathrm{m} \to (\mathbb{A}^1 / \mathbb{G}_\mathrm{m})^2$
    given by $(\mathrm{id}, 1)$ and $(1, \mathrm{id})$, respectively.
    The stack $\calFilt^2 (\mathcal{X})_{\lambda, \mu}$
    is the disjoint union of the connected components
    $(\mathcal{X}_\lambda^+)_{\mu_i}^+ \subset \calFilt^2 (\mathcal{X})$
    for relative positions $(\lambda, \mu_i)$ of~$\lambda$ and~$\mu$
    as in \cref{def-relative-position},
    where we identify
    $\mathrm{CL} (\mathcal{X}_\lambda) \simeq \mathrm{CL} (\mathcal{X}_\lambda^+)$;
    see \cite[\S\S 3.2.7 and~5.1.9]{bu2025intrinsic}.

    More generally, for a pair of morphisms
    $\beta \overset{\sigma}{\gets} \alpha \overset{\tau}{\to} \gamma$
    in the Hall category $\mathsf{Hall} (\mathcal{X})$, we define the Steinberg stack
    \begin{equation*}
        \calSt_{\sigma, \tau}
        =
        \calSt (\mathcal{X}_\alpha)_{\sigma, \tau}
        =
        \mathcal{X}_\sigma^+
        \underset{\mathcal{X}_\alpha}{\times}
        \mathcal{X}_\tau^+ \ .
    \end{equation*}
    Under certain finiteness assumptions,
    such as when~$\mathcal{X}$ is quasi-compact and has quasi-compact graded points
    as in \cref{def-qc-graded-points},
    by \cite[Theorem~6.2.3]{bu2025intrinsic},
    for generic elements
    $\lambda \in \sigma \subset \mathrm{CL} (\mathcal{X}_\alpha)$
    and
    $\mu \in \tau \subset \mathrm{CL} (\mathcal{X}_\alpha)$,
    we have a canonical isomorphism
    \begin{equation*}
        \calSt (\mathcal{X}_\alpha)_{\sigma, \tau}
        \simeq \calSt (\mathcal{X}_\alpha)_{\lambda, \mu} \ .
    \end{equation*}
\end{definition}

\begin{remark}
    \label{rem-mapping-stack}
    Since we have the pushout
    \begin{equation*}
        \mathbb{A}^1 / \mathbb{G}_{\mathrm{m}}
        \underset{\{ 1 \}}{\cup}
        \mathbb{A}^1 / \mathbb{G}_{\mathrm{m}}
        \simeq
        (\mathbb{A}^2 \setminus \{ 0 \}) / \mathbb{G}_{\mathrm{m}}^2
        \simeq
        \mathbb{P}^1 / \mathbb{G}_{\mathrm{m}} \ ,
    \end{equation*}
    with $\mathbb{G}_{\mathrm{m}}^2$ acting on~$\mathbb{A}^2$
    by scaling each coordinate
    and~$\mathbb{G}_{\mathrm{m}}$ acting on~$\mathbb{P}^1$ by scaling,
    we have
    \begin{equation*}
        \calSt (\mathcal{X}) \simeq
        \calMap (\mathbb{P}^1 / \mathbb{G}_{\mathrm{m}}, \mathcal{X}) \ .
    \end{equation*}
\end{remark}

\begin{example}[Classifying stacks]
    \label{eg-steinberg-classifying}
    Let~$k$ be a field, and let~$G$ be an affine algebraic group over~$k$
    with a split maximal torus $T \subset G$.
    Then, for cocharacters $\lambda, \mu \colon \mathbb{G}_\mathrm{m} \to T$, we have
    \begin{equation*}
        \calSt (\mathrm{B} G)_{\lambda, \mu}
        \simeq
        \mathrm{B} P_\lambda
        \underset{\mathrm{B} G}{\times}
        \mathrm{B} P_\mu
        \simeq
        P_\lambda \backslash G / P_\mu \ ,
    \end{equation*}
    where $P_\lambda, P_\mu \subset G$
    are parabolic subgroups defined as attracting loci in~$G$
    for the conjugation actions by~$\lambda, \mu$, respectively.

    In particular, when~$G$ is reductive,
    $\bar{k}$-points of $P_\lambda \backslash G / P_\mu$
    are in bijection with Bruhat cells of~$G$,
    and also with elements of the double quotient
    $W_\lambda \backslash W / W_\mu$,
    where $W$ is the Weyl group
    and $W_\lambda, W_\mu \subset W$
    are the stabilizers of~$\lambda$, $\mu$, respectively.
\end{example}

\begin{example}[Adjoint quotients]
    \label{eg-steinberg-adjoint}
    Let~$G$ and $\lambda, \mu$ be as above. The Steinberg stacks
    \begin{equation*}
        \calSt (\mathfrak{g} / G)_{\lambda, \mu} \simeq
        \mathfrak{p}_\lambda / P_\lambda
        \underset{\mathfrak{g} / G}{\times}
        \mathfrak{p}_\mu / P_\mu
        \qquad\text{and}\qquad
        \calSt (G / G)_{\lambda, \mu} \simeq
        P_\lambda / P_\lambda
        \underset{G / G}{\times}
        P_\mu / P_\mu
    \end{equation*}
    are of interest in geometric representation theory,
    where~$G$ acts on~$\mathfrak{g}$ and~$G$ by conjugation,
    and $\mathfrak{g}, \mathfrak{p}_\lambda, \mathfrak{p}_\mu$
    are the Lie algebras of~$G$, $P_\lambda$, $P_\mu$.
    For example, when~$G$ is reductive,
    $\calSt (G / G)_{\lambda, \mu}$
    gives a geometric description and categorification of affine Hecke algebras,
    as in \textcite{bezrukavnikov-2016-aha},
    and is involved in the tame local categorical Langlands correspondence
    as in \textcite[\S 2]{zhu-tame}.

    When~$G$ is split reductive,
    the stacks $\calSt (\mathfrak{g} / G)_{\lambda, \mu}$
    and $\calSt (G / G)_{\lambda, \mu}$
    have irreducible components indexed by the double quotient
    $W_\lambda \backslash W / W_\mu$,
    by \cref{cor-irred-comp}.

    For example, when $G = \mathrm{SL}_2$,
    if $\lambda \colon \mathbb{G}_\mathrm{m} \to G$
    is a generic cocharacter, then
    $\calSt (\mathfrak{sl}_2 / \mathrm{SL}_2)_{\lambda, \lambda} \simeq Z / \mathrm{SL}_2$,
    where
    \begin{equation*}
        Z = \{(x, \ell_1, \ell_2) \in \mathfrak{sl}_2 \times \mathbb{P}^1 \times \mathbb{P}^1
        \mid x(\ell_i) \subset \ell_i\} \ .
    \end{equation*}
    If we write $Z_1 = \{\ell_1 \neq \ell_2\} \subset Z$
    and $Z_2 = \{\ell_1 = \ell_2\} \subset Z$,
    so that $Z = Z_1 \sqcup Z_2$
    with $Z_1$ open and $Z_2$ closed,
    then $Z_1$ is an affine bundle of rank $1$ over
    $(\mathbb{P}^1 \times \mathbb{P}^1) \setminus \Delta$,
    and $Z_2$ is an affine bundle of rank $2$ over
    $\mathbb{P}^1$.
    The two irreducible components of $Z$
    are $\overline{Z_1}$ and $Z_2$,
    which both have dimension~$3$
    and intersect along a $2$-dimensional subvariety of~$Z$.
\end{example}

\begin{example}[$G$-bundles]
    \label{eg-steinberg-bun}
    Let $G$ and $\lambda, \mu$ as above,
    let~$X$ be a proper scheme over~$k$,
    and let $\calBun_G (X) = \calMap (X, \mathrm{B} G)$
    be the moduli stack of $G$-bundles on~$X$.
    We have the Steinberg stack
    \begin{equation*}
        \calSt (\calBun_G (X))_{\lambda, \mu}
        \simeq
        \calBun_{P_\lambda} (X)
        \underset{\calBun_G (X)}{\times}
        \calBun_{P_\mu} (X)
        \simeq
        \calMap (X, P_\lambda \backslash G / P_\mu) \ .
    \end{equation*}
    Here, we have abused notation by writing
    $\calSt (\calBun_G (X))_{\lambda, \mu}$
    for cocharacters $\lambda, \mu$,
    so it is still an open and closed substack of
    $\calSt (\calBun_G (X))$,
    but it may have multiple connected components.

    When $X = C$ is a geometrically connected
    smooth projective curve and~$G$ is split reductive,
    we also consider the semistable version
    \begin{equation*}
        \calSt (\calBun_G^\mathrm{ss} (C))_{\lambda, \mu}
        \simeq
        \calBun_{P_\lambda}^\mathrm{ss} (C)
        \underset{\calBun_G^\mathrm{ss} (C)}{\times}
        \calBun_{P_\mu}^\mathrm{ss} (C) \ ,
    \end{equation*}
    where we regard a $P_\lambda$-bundle as semistable
    if its associated graded $L_\lambda$-bundle is semistable
    and has a \emph{$G$-admissible} degree in the sense of
    \cite[\S 10.3.1]{bu-davison-ibanez-nunez-kinjo-padurariu},
    where admissibility ensures semistability of the induced $G$-bundle.
\end{example}

\subsection{The Bruhat stratification}

In this subsection, we prove \cref{theorem:stratification},
which is one of the main results of this paper.
It gives a decomposition of the Steinberg stack~$\calSt (\mathcal{X})$
generalizing the Bruhat decomposition of flag varieties of reductive groups.

\begin{definition}[Stratification]
    \label{def-stratification}
    A \emph{stratification} of a stack~$\mathcal{X}$
    is a partition
    \begin{equation*}
        \mathcal{X} = \bigcup_{i \in I} \mathcal{X}_i \ ,
    \end{equation*}
    where $I$ is a totally ordered set,
    $\mathcal{X}_i \subset \mathcal{X}$
    are disjoint locally closed substacks, such that
    $\mathcal{X}_{\geq i} = \bigcup_{j \geq i} \mathcal{X}_j$
    is open in~$\mathcal{X}$ for all $i \in I$.
    In this case, we also say that the morphism
    $\coprod_{i \in I} \mathcal{X}_i \to \mathcal{X}$
    is a stratification.

    If, moreover, we have
    $\mathcal{X}_i \subset \overline{\mathcal{X}_j}$
    whenever $i \leq j$,
    then we say that the stratification is a \emph{good stratification}.
\end{definition}

\begin{assumption}
    \label{assumption-stratification}
    Let~$S$ be a noetherian algebraic space,
    and let~$\mathcal{X}$ be an algebraic stack of finite type over~$S$
    with affine stabilizers and separated diagonal.
    Assume we are in either one of the following situations:

    \begin{enumerate}
        \item
            \label{item-case-gms}
            $S$ is of characteristic~$0$,
            $\mathcal{X}$ has a good moduli space~$X$,
            and $\mathrm{CL} (\mathcal{X})$ has a quadratic norm
            in the sense of \cref{def-norm};

        \item
            \label{item-case-glob-quot}
            $S$ is arbitrary,
            $\mathcal{X}$ has an adequate moduli space
            $X$ of finite type over~$S$,
            and it has a global quotient presentation
            $\mathcal{X} = Y / \mathrm{GL}_n$
            for an algebraic space~$Y$.
    \end{enumerate}
\end{assumption}

\noindent
Note that having a global quotient presentation implies
the existence of a quadratic norm on~$\mathrm{CL} (\mathcal{X})$,
since we may pull back the norm on $\mathrm{BGL}_n$
along the representable morphism~$\mathcal{X} \to \mathrm{BGL}_n$,
by \cite[Lemma~3.3.12]{bu2025intrinsic}.
In particular, in both cases,
$\mathcal{X}$ has quasi-compact graded points as in
\cref{def-qc-graded-points}.
Moreover, in both cases,
$\mathcal{X}$ is $\Theta$-reductive by
\cite[Proposition~3.21]{alper-halpern-leistner-heinloth-2023}.

\begin{theorem}[Bruhat stratification]
    \label{theorem:stratification}
    Under \cref{assumption-stratification},
    the natural morphism
    \begin{equation}
        \label{eq-steinberg-strat}
        \calFilt^2 (\mathcal{X}) \longrightarrow
        \calSt (\mathcal{X})
    \end{equation}
    is a stratification.

    In particular, for any
    $\lambda, \mu \in \mathrm{CL} (\mathcal{X})$,
    the morphism
    \begin{equation}
        \calFilt^2 (\mathcal{X})_{\lambda, \mu} =
        \coprod_i {} (\mathcal{X}_{\lambda}^+)_{\mu_i}^+
        \longrightarrow
        \calSt_{\lambda, \mu}
    \end{equation}
    is a stratification,
    where~$i$ runs through relative positions of~$\lambda$ and~$\mu$
    as in \cref{def-relative-position}.

    In the case \cref{item-case-gms} in
    \cref{assumption-stratification},
    the order of the stratification can be chosen to be
    any total order refining the order of the inner product
    $-\langle \lambda, \mu_i \rangle \in \mathbb{R}$.
    In the case \cref{item-case-glob-quot},
    the order is induced by the Bruhat order for~$\mathrm{GL}_n$,
    as we will explain in the proof.
\end{theorem}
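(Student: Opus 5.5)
The plan is to reduce, étale-locally on the moduli space~$X$, to a quotient of an affine scheme by a group~$G$. There the Steinberg stack maps to $\calSt (\mathrm{B} G)_{\lambda, \mu} \simeq P_\lambda \backslash G / P_\mu$, as in \cref{eg-steinberg-classifying}. We then pull back the Bruhat decomposition of $P_\lambda \backslash G / P_\mu$ and identify each preimage with a component $(\mathcal{X}_\lambda^+)_{\mu_i}^+$ of $\calFilt^2 (\mathcal{X})$.

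First, since the target is quasi-compact, the statement is étale local on~$X$. The formation of $\calGrad$, $\calFilt$ and the Steinberg stack is compatible with strongly étale base change along good (resp.\ adequate) moduli spaces, by the results of Halpern-Leistner and Alper--Halpern-Leistner--Heinloth. By the local structure theorem of Alper--Hall--Rydh we may therefore assume $\mathcal{X} = \operatorname{Spec} A / G$, with~$G$ linearly reductive in case~\cref{item-case-gms}. In case~\cref{item-case-glob-quot} we keep $G = \mathrm{GL}_n$ and take $\operatorname{Spec} A$ to be étale-local pieces of~$Y$ over~$X$, using affineness of the adequate moduli space map. The local structure theorem needs a split maximal torus, so we may also have to pass to a further étale cover of~$S$ to split~$T$.

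In this affine setting, $\mathcal{X} \to \mathrm{B} G$ is affine. By \cref{rem-mapping-stack} the Steinberg stack is $\calMap (\mathbb{P}^1 / \mathbb{G}_\mathrm{m}, \mathcal{X})$, and composing with $\mathcal{X} \to \mathrm{B} G$ gives
\begin{equation*}
    \calSt (\mathcal{X})_{\lambda, \mu} \longrightarrow \calSt (\mathrm{B} G)_{\lambda, \mu} \simeq P_\lambda \backslash G / P_\mu \ .
\end{equation*}
After refining over components, the Bruhat decomposition $G = \bigsqcup_w P_\lambda \dot{w} P_\mu$ with $w \in W_\lambda \backslash W / W_\mu$ is a stratification of the target, ordered by closure order. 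Its pullback is a stratification of $\calSt_{\lambda, \mu}$. For $\mathrm{B} G$ itself, the cell of~$w$ is $\mathrm{B} (P_\lambda \cap \dot{w} P_\mu \dot{w}^{-1})$, which is precisely the component of $\calFilt^2 (\mathrm{B} G)$ of the relative position $(\lambda, w \mu)$. The corresponding stratum of $\calSt (\mathcal{X})$ is $(U_\lambda^+ \cap \dot{w} U_\mu^+) / (P_\lambda \cap \dot{w} P_\mu \dot{w}^{-1})$, where $U = \operatorname{Spec} A$. Via the description of $\calFilt$ for quotient stacks in \cref{def-grad-filt}, applied to $\mathcal{X}_\lambda^+ = U_\lambda^+ / P_\lambda$ and the cocharacter $w \mu$, this should be exactly $(\mathcal{X}_\lambda^+)_{\mu_i}^+$.

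The main obstacle is making this last identification intrinsic. We must show that a map $\mathbb{P}^1 / \mathbb{G}_\mathrm{m} \to \mathcal{X}$ whose image in $\mathrm{B} G$ lies in the cell of~$w$ extends uniquely along
\begin{equation*}
    (\mathbb{A}^2 \setminus 0) / \mathbb{G}_\mathrm{m}^2 \hookrightarrow \mathbb{A}^2 / \mathbb{G}_\mathrm{m}^2 \ .
\end{equation*}
For $\mathrm{B} G$ this is the statement about cells above. Since $\mathcal{X} \to \mathrm{B} G$ is affine, the lift to~$\mathcal{X}$ is a section of an affine morphism over $\mathbb{A}^2$ minus the origin, which extends uniquely by Hartogs, as the origin has codimension~$2$. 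This makes $\calFilt^2 (\mathcal{X}) \to \calSt (\mathcal{X})$ a monomorphism that is an isomorphism onto each pulled-back cell, and that is what has to be checked with care, in families. One must also verify that the local orders glue under étale base change, and that the relative positions indexed by~$w$ match those of \cref{def-relative-position}.

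For the order, the Bruhat closure order of $\mathrm{GL}_n$ gives case~\cref{item-case-glob-quot} directly, since $\mathcal{X} \to \mathrm{BGL}_n$ is global. In case~\cref{item-case-gms}, the local orders depend on the chart, so we need a chart-independent order. We will show that going up in the Bruhat order, towards the open cell, strictly decreases $\langle \lambda, w \mu \rangle$ for the Weyl-invariant inner product induced by~$\nu$. This follows from the standard fact that if $w < s_\alpha w$ then $\langle \lambda, s_\alpha w \mu \rangle \leq \langle \lambda, w \mu \rangle$, with equality only when the cells coincide after taking double cosets. Consequently, any total order refining $-\langle \lambda, \mu_i \rangle$ makes the unions $\mathcal{X}_{\geq i}$ open locally, hence globally.
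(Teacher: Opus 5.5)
\textbf{Gap in the order for case~(i).} Your outline otherwise follows the paper's proof. You localize on the moduli space, pull back the Bruhat decomposition of $P_\lambda \backslash G / P_\mu$ along $\calSt(\mathcal{X})_{\lambda,\mu} \to \calSt(\mathrm{B}G)_{\lambda,\mu}$, and identify each preimage with a component of $\calFilt^2(\mathcal{X})$. Your Hartogs argument is a valid and more intrinsic substitute for the paper's explicit computation $U_\lambda^+ \cap U_{w\mu}^+ \simeq \operatorname{Spec} A/(A_{a,b} : a<0 \text{ or } b<0) \simeq (U_\lambda^+)_{w\mu}^+$. It works in families because $x,y$ is a regular sequence on $R[x,y]$ for every ring $R$, and case~(ii) goes through.

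The problem is in case~(i). The number $\langle \lambda, \mu_i \rangle$ is $\nu$ evaluated on the face $\mathbb{Z}^2 \to \mathrm{CL}(\mathcal{X})$ of the relative position. So it is attached to the connected component of $\calGrad^2$ onto which the stratum retracts. On a chart $V/G$ centred at a closed point $0$, the $W$-invariant inner product on $\Lambda_T$ computes $\nu$ only on faces through $0$. In general, the fixed loci $V_{\lambda'}/L_{\lambda'}$, and hence the preimage of a single Bruhat cell, can have components that do not specialize to $0$. These give possibly different faces of $\mathrm{CL}(\mathcal{X})$, on which $\nu$ need not equal $\langle \lambda, w\mu \rangle_{\Lambda_T}$. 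So your inequality for $\langle \lambda, w\mu\rangle$ does not yet give compatibility with $-\langle \lambda, \mu_i\rangle$.

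The missing input is Luna's finiteness theorem, strengthened to general bases by Ibáñez Núñez: $V_{\lambda'} \git L_{\lambda'} \to V \git G$ is finite. Hence the images of the components not containing $0$ are closed and can be removed, with only finitely many $\lambda'$ needed. This achieves $\mathrm{CL}(V/G) \simeq \Lambda_T/W$, and only then is $\nu$ a $W$-invariant inner product governing every stratum of the chart.

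\textbf{Smaller omission: disconnected stabilizers.} In characteristic~$0$, stabilizers of closed points are linearly reductive but can be disconnected. Then $W = \mathrm{N}_G(T)/\mathrm{Z}_G(T)$ need not be a Coxeter group, and your ``standard fact'' about $w < s_\alpha w$ does not apply directly; it also needs $\lambda, \mu$ dominant. The paper reduces to the identity component via $W = W^\circ \rtimes W^*$ and the induced decomposition of $W_\lambda \backslash W / W_\mu$, as in the second half of \cref{lem-bruhat-ineq}.

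Relatedly, running the Bruhat stratification itself over a base, for a possibly disconnected and non-split linearly reductive group scheme, needs more care than you indicate. The étale cover must split $G^\circ$ and make $\pi_0(G)$ constant, and it is taken over the moduli space rather than over $S$. The paper sidesteps this by using the $U/\mathrm{GL}_n$ charts for the stratification, and the $V/G$ charts only to compare orders on geometric fibres.
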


\begin{remark}
    In light of \cref{rem-mapping-stack},
    \cref{theorem:stratification}
    can be seen as a refinement of the fact that
    if~$\mathcal{X}$ is $\Theta$-reductive
    (which is implied by \cref{assumption-stratification}),
    then for any field~$k$, any morphism
    $(\mathbb{A}^2_{\smash{k}} \setminus \{ 0 \}) / \mathbb{G}_{\smash{\mathrm{m}, \, k}}^2 \to \mathcal{X}$
    extends uniquely to a morphism
    $\mathbb{A}^2_{\smash{k}} / \mathbb{G}_{\smash{\mathrm{m}, \, k}}^2 \to \mathcal{X}$,
    as in the proof of
    \textcite[Lemma~5.1.11]{halpern-leistner-instability}.
    In other words, the morphism
    \cref{eq-steinberg-strat}
    is universally bijective.
    We strengthen this fact by showing that
    there is a global order on the set of strata
    making it into a stratification.
\end{remark}

\begin{remark}
    The stratification in \cref{theorem:stratification}
    is not always a good stratification in the sense of \cref{def-stratification}.
    For example, for $\mathcal{X} = \mathfrak{sl}_2 / \mathrm{SL}_2$
    as in \cref{eg-steinberg-adjoint},
    the stratification corresponds to the decomposition
    $Z = Z_1 \sqcup Z_2$ there,
    which is not a good stratification since
    $Z_2 \not\subset \overline{Z_1}$.
\end{remark}

\begin{remark}
    \cref{theorem:stratification}
    does not hold for stacks without good or adequate moduli spaces in general.
    For example, consider the stack
    $\mathcal{X} = \mathrm{B} G$ over a field,
    where $G = \mathbb{G}_{\mathrm{a}}\rtimes\mathbb{G}_{\mathrm{m}}$
    with $\mathbb{G}_{\mathrm{m}}$ acting on $\mathbb{G}_{\mathrm{a}}$ with weight $1$.
    Let $\lambda \colon \mathbb{G}_{\mathrm{m}} \to G$
    be given by $t \mapsto (0, t^{-1})$,
    so $\mathcal{X}_{\lambda}^+ \simeq \mathrm{B} {\mathbb{G}_{\mathrm{m}}}$,
    and consider the Steinberg stack
    \begin{equation*}
        \calSt (\mathcal{X})_{\lambda,\lambda}
        = \mathcal{X}_{\lambda}^+ \underset{\mathcal{X}}{\times} \mathcal{X}_{\lambda}^+
        \simeq \mathbb{G}_{\mathrm{m}}\backslash G/\mathbb{G}_{\mathrm{m}}
        \simeq \mathbb{A}^1 / \mathbb{G}_\mathrm{m} \ ,
    \end{equation*}
    where the last quotient has weight~$1$.
    Since every connected component of
    $\calFilt^n (\mathcal{X})$
    for any $n \geq 0$ is isomorphic to either
    $\mathrm{B} G$ or
    $\mathrm{B} \mathbb{G}_{\mathrm{m}}$,
    the stack $\calSt (\mathcal{X})_{\lambda,\lambda}$
    cannot be stratified by any substack of
    $\calFilt^n (\mathcal{X})$ for any~$n$.
\end{remark}

\begin{remark}[$\Theta$-stratification]
    The stratification in \cref{theorem:stratification}
    in fact has the structure of a $\Theta$-stratification in the sense of
    \textcite[Definition~2.1.2]{halpern-leistner-instability}.
    To see this,
    it is enough to show that each $(\mathcal{X}_\lambda^+)_{\mu_i}^+$
    can be identified with a connected component of
    $\calFilt (\calSt_{\smash{\lambda, \mu}}^{\geq i})$
    for the open substack
    $\calSt_{\smash{\lambda, \mu}}^{\geq i} \subset \calSt_{\lambda, \mu}$.
    Indeed, we may take this component to be
    \begin{equation*}
        (\calSt_{\smash{\lambda, \mu}}^{\geq i})_{\mu_i}^+
        \subset (\calSt_{\lambda, \mu})_{\mu_i}^+
        \simeq
        (\mathcal{X}_\lambda^+)_{\mu_i}^+
        \underset{\mathcal{X}_\mu^+}{\times}
        (\mathcal{X}_\mu^+)_\mu^+
        \simeq
        (\mathcal{X}_\lambda^+)_{\mu_i}^+ \ ,
    \end{equation*}
    where
    $(\calSt_{\smash{\lambda, \mu}}^{\geq i})_{\mu_i}^+$
    and
    $(\calSt_{\lambda, \mu})_{\mu_i}^+$
    are the preimages of
    $(\mathcal{X}_\lambda^+)_{\mu_i}^+$
    under the induced maps on filtered points,
    and we used the fact that $\calFilt (-)$ commutes with limits and that
    $(\mathcal{X}_\mu^+)_\mu^+ \simeq \mathcal{X}_\mu^+$.
    In fact, this shows the stronger property that
    $(\mathcal{X}_\lambda^+)_{\mu_i}^+$
    is a connected component of
    $\calFilt (\calSt_{\lambda, \mu})$
    which is contained in
    $\calFilt (\calSt_{\smash{\lambda, \mu}}^{\geq i})$.
    Note that the $\Theta$-stratification structure is not canonical,
    since the choice of the component of
    $\calFilt (\calSt_{\smash{\lambda, \mu}}^{\geq i})$
    is also a part of the data of a $\Theta$-stratification,
    and for example, we may use~$\lambda_i$ instead of~$\mu_i$
    to pick a different component.
\end{remark}

\begin{proof}[Proof of \cref{theorem:stratification}]
    It is enough to prove the version for fixed $\lambda, \mu$.
    In fact, we may assume that $\mathcal{X} \simeq U / \mathrm{GL}_n$
    for an affine scheme $U$,
    since the statement can be checked étale locally
    over the good or adequate moduli space,
    and in the case \cref{item-case-gms},
    this follows from \textcite[Theorem~6.1]{alper-hall-rydh-2026-etale};
    in the case \cref{item-case-glob-quot},
    by \textcite[Theorem~4.3.1]{alper-2014-adequate},
    the morphism $Y \to X$ is affine,
    so the base change along any étale morphism
    $X' \to X$ with~$X'$ affine gives the desired form.

    Now let $\mathcal{X} = U / \mathrm{GL}_n$ for an affine scheme~$U$,
    let $T \subset \mathrm{GL}_n$ be a maximal torus,
    and let~$\lambda, \mu \colon \mathbb{G}_\mathrm{m} \to T$ be cocharacters.
    By \textcite[Theorem~1.4.7]{halpern-leistner-instability}, we have
    \begin{equation*}
        \mathcal{X}_\lambda \simeq
        U_\lambda / L_\lambda \ ,
        \qquad
        \mathcal{X}_\lambda^+ \simeq
        U_\lambda^+ / P_\lambda \ ,
    \end{equation*}
    where $U_\lambda, U_\lambda^+ \subset U$
    are the fixed and attracting loci of~$\lambda$, respectively, described in
    \textcite[\S 1.4.7]{drinfeld-gaitsgory-2014-braden},
    and $L_\lambda, P_\lambda \subset \mathrm{GL}_n$
    are the Levi and parabolic subgroups given by~$\lambda$.
    It follows that we have
    \begin{equation*}
        \calSt_{\lambda, \mu} \simeq P_\lambda \backslash Z / P_\mu \ ,
        \qquad
        Z = \{ (x, g, y) \in U_\lambda^+ \times \mathrm{GL}_n \times U_\mu^+ \mid x = g y \} \ ,
    \end{equation*}
    and for each
    $w \in |P_\lambda \backslash \mathrm{GL}_n / P_\mu| \simeq
    W_\lambda \backslash W / W_\mu$,
    where $W_\lambda, W_\mu \subset W$ are the stabilizers of~$\lambda, \mu$,
    we have the locally closed subset
    \begin{equation*}
        \calSt_{\lambda, \mu}^w \simeq P_\lambda \backslash Z_w / P_\mu \ ,
        \qquad
        Z_w = \{ (x, g, y) \in Z \mid g \in P_\lambda w P_\mu \} \ .
    \end{equation*}
    The Bruhat decomposition
    $\mathrm{GL}_n = \bigsqcup_w P_\lambda w P_\mu$
    implies that we have a stratification
    \begin{equation*}
        \calSt_{\lambda, \mu} =
        \bigsqcup_{w \in W_\lambda \backslash W / W_\mu}
        \calSt_{\lambda, \mu}^w \ ,
    \end{equation*}
    with the order given by any total order refining the Bruhat order on
    $W_\lambda \backslash W / W_\mu$.
    We will show that the connected components of
    $\calSt_{\lambda, \mu}^w$
    give the desired stratification,
    so that components with the same~$w$ can be ordered arbitrarily.

    Indeed, now write $U = \operatorname{Spec} A$,
    and let $w \in W_\lambda \backslash W / W_\mu$.
    The $\mathbb{G}_\mathrm{m}^2$-action on~$U$
    given by $(\lambda, w \mu)$
    equips~$A$ with a $\mathbb{Z}^2$-grading
    $A = \bigoplus_{\smash{(a, b) \in \mathbb{Z}^2}} A_{a, b}$,
    and by the description in
    \cite[\S 1.4.7]{drinfeld-gaitsgory-2014-braden}, we have
    \begin{equation*}
        U_\lambda^+ = \operatorname{Spec}
        \frac{A}{( A_{a, b} : a < 0 )} \ ,
        \qquad
        U_{w \mu}^+ = \operatorname{Spec}
        \frac{A}{( A_{a, b} : b < 0 )} \ ,
    \end{equation*}
    as closed subschemes of~$U$,
    where the denominators are ideals generated by the graded parts.
    We may then identify
    \begin{equation*}
        \calSt_{\lambda, \mu}^w
        \simeq \frac{U_\lambda^+ \cap U_{w \mu}^+}{P_\lambda \cap w P_\mu w^{-1}} \ ,
        \qquad
        U_\lambda^+ \cap U_{w \mu}^+ \simeq \operatorname{Spec}
        \frac{A}{( A_{a, b} : a < 0 \text{ or } b < 0 )}
        \simeq (U_\lambda^+)_{w \mu}^+ \ ,
    \end{equation*}
    where we take scheme-theoretic intersections.
    As a consequence,
    $\calSt_{\lambda, \mu}^w \simeq
    ((U / \mathrm{GL}_n)_\lambda^+)_{w \mu}^+$.

    The proof is now complete in the case \cref{item-case-glob-quot},
    with the order of the stratification
    induced by the Bruhat order for $\mathrm{GL}_n$ as above.

    In the case \cref{item-case-gms},
    it remains to show that the order of the stratification
    when $\mathcal{X} = U / \mathrm{GL}_n$
    is compatible with the order given by any norm on $\mathrm{CL} (\mathcal{X})$.
    It is enough to check this on an étale neighbourhood
    of any closed point $x \in \mathcal{X}$.
    Let $\mathrm{Aut} (x)$ denote the stabilizer group of~$x$.

    Applying \cref{lem-local-str}
    implies that we may further replace $U / \mathrm{GL}_n$
    by $V / G$ defined over a base scheme~$S'$,
    where $V$ is affine,
    $G$ is a linearly reductive group scheme,
    and there exists $s \in S'$ with $G_s \simeq \mathrm{Aut} (x)_{\kappa'}$
    and a point $0 \in V_s$ fixed by $G_s$,
    where $\kappa' = \kappa (s)$ is a finite extension of~$\kappa (x)$.
    By \cite[Theorem~9.9]{alper-hall-rydh-2026-etale},
    there is a well-defined unit component $G^\circ \subset G$ which is reductive,
    and $G / G^\circ \to S'$ is finite étale.
    Taking a further étale neighbourhood,
    by \cite[Lemma~5.1.3]{conrad-2014-reductive},
    we may assume that $G^\circ$ is split reductive and
    $G / G^\circ \simeq \uppi_0 (G)$ is a constant finite group.
    Let $T \subset G^\circ$ be a split maximal torus.

    Now, a theorem of \textcite[Theorem]{luna-1975-adherences},
    strengthened by \textcite{ibanez-nunez-finiteness}
    to allow more general bases,
    implies that for any cocharacter
    $\lambda \colon \mathbb{G}_\mathrm{m} \to T$,
    the morphism
    $V_\lambda \git L_\lambda \to V \git G$
    is finite, where $V_\lambda \subset V$ is the $\lambda$-fixed locus.
    Therefore, shrinking~$V$,
    we may assume that $V_\lambda / L_\lambda$ is connected for all~$\lambda$.
    Indeed, we may remove from~$V \git G$ the images of
    $(V_\lambda \setminus V_\lambda^\circ) \git L_\lambda$
    for all~$\lambda$, where
    $V_\lambda^\circ / L_\lambda \subset V_\lambda / L_\lambda$
    is the connected component containing~$0$,
    and the cited result implies that such images are closed.
    Only finitely many~$\lambda$ are needed for this process.

    We now have $\mathrm{CL} (V / G) \simeq \Lambda_T / W$,
    where $W = \mathrm{N}_G (T) / \mathrm{Z}_G (T)$ is the Weyl group,
    as a constant finite group on~$S'$.
    Crucially, the original quadratic norm on~$\mathrm{CL} (\mathcal{X})$
    now induces a $W$-invariant inner product on~$\Lambda_T$.
    For cocharacters $\lambda, \mu \colon \mathbb{G}_\mathrm{m} \to T$,
    the strata of $\calSt_{\lambda, \mu} (V / G)$
    are given by preimages of Bruhat strata in
    $P_\lambda \backslash G / P_\mu$,
    whose order can now be checked on geometric fibres,
    and the result follows from \cref{lem-bruhat-ineq}.
\end{proof}

\begin{lemma}
    \label{lem-local-str}
    Let~$\mathcal{X}$ be a stack as in
    \cref{theorem:stratification}~\cref{item-case-gms}.
    Then for any closed point $x \in \mathcal{X}$
    with residue field $\kappa = \kappa (x)$,
    there exists a cartesian diagram
    \begin{equation*}
        \begin{tikzcd}
            \llap{$V / G \simeq {}$}
            \mathcal{X}'
            \ar[r, "f"]
            \ar[d, "\pi'"']
            \ar[dr, phantom, pos=.2, "\ulcorner"]
            &
            \mathcal{X}
            \ar[d, "\pi"]
            \\
            X'
            \ar[r, "\bar{f}"]
            &
            X \rlap{ ,}
        \end{tikzcd}
    \end{equation*}
    where $f$ and $\bar{f}$ are étale,
    $X'$ and~$V$ are affine schemes,
    $G$ is a linearly reductive group scheme over~$X'$,
    and there is a point $0 \in V$ with residue field
    $\kappa' = \kappa (0)$ which is a finite extension of~$\kappa$,
    fixed by the action of $G_{\bar{0}}$,
    where $\bar{0} = \pi' (0)$ also has residue field~$\kappa'$,
    and $f (0) = x$ inducing an isomorphism on stabilizer groups
    $G_{\bar{0}} \simeq \mathrm{Aut} (x)$.
\end{lemma}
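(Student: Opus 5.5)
The plan is to combine the Luna étale slice theorem of Alper--Hall--Rydh with the fact that the stabilizer of a closed point of a stack with a good moduli space is linearly reductive. Since $\mathcal{X}$ has a good moduli space $\pi \colon \mathcal{X} \to X$ and $x$ is closed, the stabilizer $\mathrm{Aut}(x)$ is linearly reductive over~$\kappa$ by \cite{alper-2013-good}. The local structure theorem of \textcite[Theorem~6.1]{alper-hall-rydh-2026-etale}, in the relative version over the noetherian base~$S$ in characteristic~$0$, gives the required data. It provides an affine scheme~$X'$, a linearly reductive group scheme $G \to X'$, an affine scheme~$V$ with a $G$-action, and a point $0 \in V$, together with an étale morphism $f \colon V/G \to \mathcal{X}$ satisfying the following: $f(0) = x$, the residue field extension $\kappa(x) \subset \kappa(0)$ is finite, $0$ is fixed by the fibre group, and $f$ induces an isomorphism of stabilizers $G_{\bar 0} \simeq \mathrm{Aut}(x)$, possibly after base change to~$\kappa'$. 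One may need the version in which $G$ is a deformation of $\mathrm{Aut}(x)$ over an étale neighbourhood of the image of~$x$ in~$S$. I will use that version, which \cite{alper-hall-rydh-2026-etale} states explicitly for linearly reductive stabilizers.

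The main point is to arrange that $f$ is the base change of an étale morphism of good moduli spaces, so that the square is cartesian. The tool is Luna's fundamental lemma in the form of \cite[Theorem~6.10]{alper-hall-rydh-2026-etale} or \cite{alper-2013-good}. The hypotheses to check are that $f$ is étale, representable and separated, that $f$ is stabilizer-preserving at~$0$, and that $0$ is closed in its fibre. Closedness holds because $0$ is fixed by $G_{\bar 0}$ and so is a closed orbit in the fibre of $V/G \to V \git G$. Given these, there is an open neighbourhood $\mathcal{U} \subset V/G$ of~$0$, saturated for the good moduli space morphism $V/G \to V \git G$, such that $f|_{\mathcal{U}}$ is strongly étale. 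This means the induced map $\mathcal{U}\git \to X$ is étale and the square is cartesian. Since $V \git G$ is affine, I can shrink further to a basic open affine $X' \subset V \git G$ containing~$\bar 0$. Its preimage in~$V$ is then affine and $G$-stable, and still contains~$0$, so I replace $V$ by it. The residue field of $\bar 0$ equals that of~$0$ because $0$ is the unique closed point of its fibre and is fixed, so the good moduli space map is a bijection on the relevant points with the same residue field.

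The step I expect to be the main obstacle is getting the group scheme~$G$ defined over~$X'$ rather than over a base étale over~$S$. One must also keep track of residue field extensions when $\kappa(x)$ is not algebraically closed. I would handle the first by pulling $G$ back along $X' \to S'$, where $S'$ is the étale neighbourhood of~$S$ over which the local structure theorem produces~$G$. Here $V/G$ maps to~$S'$, so $V \git G$ and $X'$ map to~$S'$ as well. For the second, I would pass to the finite extension $\kappa'$ that the slice theorem allows, which suffices because the statement only asks for a finite extension.
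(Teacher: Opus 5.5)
Your argument is essentially the paper's. Both get the lemma from the Alper--Hall--Rydh étale slice theorem with a linearly reductive group scheme deforming $\mathrm{Aut}(x)$. Your extra steps (Luna's fundamental lemma, shrinking to a basic open $X' \subset V \git G$, comparing residue fields, and pulling $G$ back to $X'$) make explicit what the paper leaves inside the citation.

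The reference needs fixing. As the paper uses it, \cite[Theorem~6.1]{alper-hall-rydh-2026-etale} only reduces to the case $\mathcal{X} \simeq U / \mathrm{GL}_n$ with $U$ affine, étale locally on $X$. The group scheme $G$ comes from \cite[Theorem~10.4]{alper-hall-rydh-2026-etale} applied to that quotient stack, so insert this reduction before invoking the group-scheme version rather than applying it directly to $\mathcal{X}$.
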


\begin{proof}
    By \textcite[Theorem~6.1]{alper-hall-rydh-2026-etale},
    we may assume $\mathcal{X} \simeq U / \mathrm{GL}_n$
    for an affine scheme~$U$.
    The lemma now follows from applying
    \cite[Theorem~10.4]{alper-hall-rydh-2026-etale}
    to this situation.
\end{proof}

\begin{lemma}
    \label{lem-bruhat-ineq}
    Let~$k$ be an algebraically closed field of characteristic~$0$,
    $G$ a linearly reductive group over~$k$
    with a maximal torus~$T$,
    and let $\lambda, \mu \colon \mathbb{G}_\mathrm{m} \to T$ be cocharacters.
    Define the Bruhat partial order $\preceq$ on the double quotient
    $|P_\lambda \backslash G / P_\mu|$
    to be given by $w \preceq w'$ if and only if
    $\{ w \} \subset \overline{\{ w' \}}$.

    Then, for any $W$-invariant inner product $\langle -, - \rangle$
    on the cocharacter lattice~$\Lambda_T$ of\/~$T$,
    and for any $w, w' \in |P_\lambda \backslash G / P_\mu|$,
    we have that
    \begin{equation*}
        w \prec w'
        \quad \text{implies} \quad
        \langle \lambda, w \mu \rangle > \langle \lambda, w' \mu \rangle \ .
    \end{equation*}
\end{lemma}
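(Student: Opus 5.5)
The plan is to read the inequality as a statement about the weights of a $\mathbb{G}_\mathrm{m}$-action on a projective variety, and then use the monotonicity of these weights along closures of attracting cells (Białynicki-Birula). First, I will reduce to the case where the double cosets are $T$-fixed points. Representatives of $|P_\lambda \backslash G / P_\mu|$ can be taken in $\mathrm{N}_G (T)$, giving the identification with $W_\lambda \backslash W / W_\mu$. The quantity $\langle \lambda, w\mu \rangle$ is then well defined on double cosets, because $\langle -,- \rangle$ is $W$-invariant and $W_\lambda$, $W_\mu$ fix $\lambda$, $\mu$.

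Next, I will pass to the partial flag variety $F = G/P_\mu$, on which $P_\lambda$ acts. Each double coset $P_\lambda n P_\mu$ corresponds to a $P_\lambda$-stable locally closed subset $C_w \subset F$, and $w \prec w'$ means $C_w \subset \overline{C_{w'}}$. By the standard description $P_\lambda = L_\lambda \ltimes U_\lambda$ of the attracting locus, $C_w$ is the $\lambda$-attracting set of the $\lambda$-fixed subvariety $F_w = L_\lambda n P_\mu / P_\mu$. Here I use that the $\lambda$-fixed locus of $F$ is the disjoint union of the $L_\lambda$-orbits of the $T$-fixed points $nP_\mu$.

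I then need a function $h$ that is locally constant on the $\lambda$-fixed locus and strictly monotone along attracting flows. Using the inner product, I will regard $\mu$ as a rational character $\chi_\mu = \langle \mu, - \rangle$ of $T$; it extends to $L_\mu$ and hence to $P_\mu$ after passing to a multiple. This gives a $G$-linearized line bundle $\mathcal{L}$ on $F$, which is ample (or its dual is, depending on sign conventions). On the fixed component $F_w$, the cocharacter $\lambda$ acts on the fibre of $\mathcal{L}$ by the weight $\pm\langle \lambda, w\mu \rangle$, since at $nP_\mu$ the torus acts through $n\chi_\mu$. Embedding $F$ equivariantly into $\mathbb{P}(V)$ by sections of $\mathcal{L}^{\otimes m}$, this weight is the "moment map" value $h(F_w)$.

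The key step is the standard fact that for a $\mathbb{G}_\mathrm{m}$-action on a closed subvariety of $\mathbb{P}(V)$, the closure of the attracting set of a fixed component $F_{w'}$ meets only those attracting sets of components $F_w$ with $h(F_w) \neq h(F_{w'})$ on one fixed side, apart from $F_{w'}$ itself. Equivalently, a limit of points flowing to weight $a$ flows to a strictly smaller weight, which can be proved by decomposing $V$ into $\lambda$-weight spaces and tracking the lowest nonzero weight component. This gives $\langle \lambda, w\mu \rangle > \langle \lambda, w'\mu \rangle$, up to the sign convention that I will have to fix carefully.

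The main obstacles are twofold. The first is making the inequality strict when two distinct fixed components have the same weight; the weight-space argument should handle this, since it shows that the closure of $C_{w'}$ contains only points flowing to strictly different weight, unless they already lie in $C_{w'}$. The second is $G$ being disconnected. There $F$ is a disjoint union indexed by $\uppi_0 (G)$, so closure relations occur only within one $G^\circ$-orbit, and the argument above applies to each $G^\circ$-orbit separately.
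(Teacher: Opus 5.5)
Your argument takes a genuinely different route from the paper's. The paper argues combinatorially. It takes $\lambda,\mu$ dominant and reduces the order on $W_\lambda\backslash W/W_\mu$ to covering relations $w\prec ws_\alpha$ in $W$ with $\ell(ws_\alpha)=\ell(w)+1$, so that $w\alpha>0$. It then computes
\[
    \langle\lambda,w\mu\rangle-\langle\lambda,ws_\alpha\mu\rangle
    =\tfrac{2}{\langle\alpha,\alpha\rangle}\,\langle\lambda,w\alpha\rangle\,\langle\mu,\alpha\rangle\geq 0 ,
\]
with equality only when the two double cosets coincide. Disconnected $G$ is handled through $W=W^\circ\rtimes W^*$.

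You instead work with the closure order exactly as the statement defines it. You realize the double cosets as Bia\l{}ynicki-Birula cells of $\lambda$ on $G/P_\mu$ and use semicontinuity of the lowest $\lambda$-weight in an equivariant projective embedding. This avoids the Coxeter combinatorics, and also the identification of the closure order with the combinatorial Bruhat order on double cosets, which the paper takes as standard. The paper's computation is shorter and applies to real inner products directly; yours is more conceptual but needs a rational linearization and sign bookkeeping.

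Your strictness step is sound. On the locally closed set where the lowest weight equals $a(w')$, the limit map is the linear projection to $\mathbb{P}(V_{a(w')})$. It therefore sends $\overline{C_{w'}}\cap\{a_{\min}=a(w')\}$ into the closed set $F_{w'}$. Since the $F_w$ are pairwise disjoint, $C_w\subset\overline{C_{w'}}$ with $w\neq w'$ forces $a(w)>a(w')$.

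There is, however, one step that fails as written. The inner product in the lemma is an arbitrary real $W$-invariant one; in the application it comes from a real-valued norm. So $\chi_\mu=\langle\mu,-\rangle$ is in general only an element of $X^*(T)\otimes\mathbb{R}$. For $\mathrm{GL}_2$, the invariant forms are $a(x_1+x_2)(y_1+y_2)+b(x_1-x_2)(y_1-y_2)$ with $a,b>0$. For $\mu=(1,0)$ this gives $\chi_\mu=(a+b)y_1+(a-b)y_2$, which is not proportional to a rational character unless $a/b\in\mathbb{Q}$. No rescaling produces the line bundle $\mathcal{L}$.

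A limiting argument alone would only give $\geq$. What does work: the space of $W$-invariant symmetric forms on $\Lambda_T\otimes\mathbb{R}$ is defined over $\mathbb{Q}$, and the positive definite ones form an open cone in it. Hence any $W$-invariant inner product is a convex combination of finitely many rational ones. Since $\langle\lambda,w\mu\rangle-\langle\lambda,w'\mu\rangle$ is linear in the form, strict positivity for rational forms implies it in general.

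You also leave the sign open, and since the direction is the content of the lemma, you should fix it. It comes out right. Take $\mathcal{L}_\chi=G\times^{P_\mu}k_\chi$ with $[gp,z]=[g,\chi(p)z]$. Then $T$ acts on the fibre at $\dot wP_\mu$ through $w\chi$, so $\lambda$ has weight $\langle\lambda,w\mu\rangle$ on $\mathcal{L}_{\chi_\mu}$.

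By $W$-invariance, each coroot $\alpha^\vee$ is a positive multiple of the vector representing $\alpha$. So $\chi_\mu$ pairs positively with coroots of roots in the unipotent radical of $P_\mu$ and trivially with those of $L_\mu$. Thus $\mathcal{L}_{\chi_\mu}$ is antiample (a negative power of $\mathcal{O}(1)$ on $\mathbb{P}^1$ for $\mathrm{SL}_2$), and $\mathcal{L}_{-m\chi_\mu}$ is very ample for suitable $m$. In $\mathbb{P}(H^0(\mathcal{L}_{-m\chi_\mu})^\vee)$, the weight on the tautological line over $F_w$ is $m\langle\lambda,w\mu\rangle$. Your lowest-weight argument then gives exactly $\langle\lambda,w\mu\rangle>\langle\lambda,w'\mu\rangle$.

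For disconnected $G$, the components of $G/P_\mu$ are indexed by $\pi_0(G)/\pi_0(P_\mu)$, not $\pi_0(G)$. A single $P_\lambda$-double coset may meet several of them. You should therefore argue with $(P_\lambda\cap G^\circ)$-orbits on each component $G^\circ\dot nP_\mu/P_\mu\simeq G^\circ/(G^\circ\cap P_{n\mu})$. You then need to check, via $W$-invariance and the identification of double cosets with $W_\lambda\backslash W/W_\mu$, that their $T$-fixed points still have weight $\langle\lambda,w\mu\rangle$.
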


\begin{proof}
    We first assume that~$G$ is connected.
    We may assume that $\lambda, \mu$ are dominant,
    and identify $|P_\lambda \backslash G / P_\mu|$
    with $W_\lambda \backslash W / W_\mu$,
    where $W_\lambda, W_\mu \subset W$ are the stabilizers of $\lambda, \mu$.
    The Bruhat order on the double quotient is induced by that on~$W$,
    and the latter is generated by covering relations
    $w \prec w'$ for $w' = w s_\alpha$,
    where $s_\alpha$ is the reflection along a positive root~$\alpha$,
    and $\ell (w') = \ell (w) + 1$.
    It suffices to prove the lemma for such $w, w'$
    when they give different elements of
    $W_\lambda \backslash W / W_\mu$.
    By \textcite[Proposition~5.7]{humphreys-1990-coxeter},
    the length condition implies that $w \alpha$ is a positive root.
    Thus
    \begin{align*}
        \langle \lambda, w \mu \rangle - \langle \lambda, w' \mu \rangle
        & = \langle \lambda, w (\mu - s_\alpha \mu) \rangle
        \\
        & = \Bigl< \lambda, w \cdot 2 \frac{\langle \mu, \alpha \rangle}{\langle \alpha, \alpha \rangle} \alpha \Bigr>
        \\
        & =
        \frac{2}{\langle \alpha, \alpha \rangle}
        \langle \lambda, w \alpha \rangle
        \langle \mu, \alpha \rangle > 0 \ .
    \end{align*}
    Here, we see roots as cocharacters
    (with $\mathbb{R}$-coefficients)
    using the inner product.
    The last inequality is strict,
    because we assumed that $W_\lambda w' W_\mu \neq W_\lambda w W_\mu$;
    if $\langle \mu, \alpha \rangle = 0$,
    then $w' \mu = w \mu$ and $w' W_\mu = w W_\mu$;
    if $\langle \lambda, w \alpha \rangle = 0$,
    then $\langle w^{-1} \lambda, \alpha \rangle = 0$,
    so $w^{-1} \lambda = w'^{-1} \lambda$
    and $W_\lambda w' = W_\lambda w$.

    Now assume that~$G$ is disconnected.
    Let $G^\circ \subset G$ be the unit component,
    and let $W^\circ \subset W$ be the Weyl group of~$G^\circ$,
    which is a normal subgroup.
    Since the $W$-action on~$\Lambda_T$ preserves the root hyperplanes,
    each $W^\circ$-coset in $W$
    contains a unique element preserving the dominant chamber.
    Such elements form a subgroup
    $W^* = (\mathrm{N}_G (T) \cap \mathrm{N}_G (B)) / \mathrm{Z}_G (T) \subset W$,
    and we have a semidirect product
    $W = W^\circ \rtimes W^*$.
    Since $\lambda, \mu$ are dominant, one can deduce that
    $W_\lambda = W_\lambda^\circ \rtimes W_\lambda^*$
    with $W_\lambda^\circ \subset W^\circ$ and $W_\lambda^* \subset W^*$
    the stabilizers of~$\lambda$, and similarly for~$\mu$.
    We thus have a partition
    \begin{equation*}
        W_\lambda \backslash W / W_\mu \simeq
        \coprod_{w \in W_\lambda^* \backslash W^* / W_{\mu}^*}
        \frac{W_\lambda^\circ \backslash W^\circ / W_{w \mu}^\circ}
        {W_\lambda^* \cap W_{w \mu}^*}
    \end{equation*}
    as a partially ordered set,
    where we use the Bruhat order on
    $W_\lambda^\circ \backslash W^\circ / W_{w \mu}^\circ$
    as in the connected case,
    which descends to the quotient by the conjugation action of
    $W_\lambda^* \cap W_{w \mu}^*$,
    and the lemma follows from the connected case.
\end{proof}

\noindent
Finally, we record a corollary of \cref{theorem:stratification}
which seems to be well-known,
but we could not find a direct reference for it,
so we include a simple proof here. We refer to \cite[\S3.3]{chriss1997representation} for a closely related variety.

\begin{corollary}
    \label{cor-irred-comp}
    Let~$k$ be a field,
    let~$G$ be a reductive group over~$k$
    with a split maximal torus $T \subset G$,
    and let $\lambda, \mu \colon \mathbb{G}_\mathrm{m} \to T$ be cocharacters.
    Then the Steinberg stacks for adjoint quotients
    \begin{equation*}
        \calSt (\mathfrak{g} / G)_{\lambda, \mu} \simeq
        \mathfrak{p}_\lambda / P_\lambda
        \underset{\mathfrak{g} / G}{\times}
        \mathfrak{p}_\mu / P_\mu
        \qquad\text{and}\qquad
        \calSt (G / G)_{\lambda, \mu} \simeq
        P_\lambda / P_\lambda
        \underset{G / G}{\times}
        P_\mu / P_\mu
    \end{equation*}
    are equidimensional of dimension~$0$,
    and have irreducible components indexed by the double quotient
    $W_\lambda \backslash W / W_\mu$,
    where $W = \mathrm{N}_G (T) / T$ is the Weyl group,
    and $W_\lambda, W_\mu \subset W$
    are the stabilizers of~$\lambda, \mu$.
    Each irreducible component is
    the closure of a stratum given by
    \cref{theorem:stratification}.
\end{corollary}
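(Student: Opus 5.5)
Using \cref{theorem:stratification} (whose proof for $U/\mathrm{GL}_n$ in fact works for any $V/G$ by pulling back the Bruhat decomposition of $P_\lambda\backslash G/P_\mu$), I would write $\calSt(\mathfrak{g}/G)_{\lambda,\mu}$ as a disjoint union of locally closed strata $\calSt^w_{\lambda,\mu}$, one for each $w \in W_\lambda\backslash W/W_\mu$. The same applies to $G/G$; I will treat $\mathfrak{g}$ and note that $G$ is identical. Each stratum has the form
\[
\calSt^w_{\lambda,\mu} \simeq \frac{\mathfrak{p}_\lambda\cap \mathrm{Ad}_w\mathfrak{p}_\mu}{P_\lambda\cap wP_\mu w^{-1}} \ .
\]
This follows from the identification $(U_\lambda^+)^+_{w\mu} = U_\lambda^+\cap U_{w\mu}^+$ in the proof, using $U=\mathfrak{g}$ and $U_\lambda^+=\mathfrak{p}_\lambda$. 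For $G/G$ the stratum is $(P_\lambda\cap wP_\mu w^{-1})$ modulo its own conjugation action. In both cases it is a vector space, or a connected group, modulo a connected group of the same dimension. I may pass to $\bar k$ to check irreducibility and dimension, since the representatives $w$ can be chosen in $N_G(T)(k)$ because $T$ is split.

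First, each stratum is irreducible of dimension $0$. The numerator is an irreducible affine space, respectively a connected smooth group; parabolic intersections $P_\lambda\cap wP_\mu w^{-1}$ are connected, since they contain $T$ and are generated by $T$ and root subgroups. The quotient stack therefore has dimension $\dim(\mathfrak p_\lambda\cap \mathrm{Ad}_w\mathfrak p_\mu)-\dim(P_\lambda\cap wP_\mu w^{-1})=0$.

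Next, I would show that $\calSt_{\lambda,\mu}$ is equidimensional of dimension $0$. The strata are finitely many and all of dimension $0$, so the whole stack has dimension $0$. The key point is that no stratum is a component of smaller dimension lying inside another, and no component can have dimension below $0$. For the lower bound I would use that $\calSt_{\lambda,\mu}$ is a fibre product of two smooth stacks over the smooth stack $\mathfrak g/G$ with expected dimension $\dim(\mathfrak p_\lambda/P_\lambda)+\dim(\mathfrak p_\mu/P_\mu)-\dim(\mathfrak g/G)=0$. It is thus locally cut out in a smooth stack by $\dim(\mathfrak g/G)$-many equations; more precisely, it is the preimage of the diagonal, which is a regular immersion of codimension $\dim\mathfrak g-\dim G=0$ up to the stacky correction. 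Concretely, I would present it as $Z/(P_\lambda\times P_\mu)$ with $Z=\{(x,g,y)\in\mathfrak p_\lambda\times G\times\mathfrak p_\mu: x=\mathrm{Ad}_g y\}$. Then $Z$ is cut out of a smooth variety of dimension $\dim\mathfrak p_\lambda+\dim G+\dim\mathfrak p_\mu$ by $\dim\mathfrak g$ equations, so every component of $Z$ has dimension at least $\dim\mathfrak p_\lambda+\dim\mathfrak p_\mu$. Hence every component of the stack has dimension at least $0$. For $G/G$, use $x=gyg^{-1}$ in $G$ instead.

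Combining the two bounds, every irreducible component has dimension exactly $0$. Each component is the closure of its generic stratum, and since each stratum is irreducible of dimension $0$, each stratum closure is a component. The closures $\overline{\calSt^w}$ are distinct for distinct $w$. Otherwise $\calSt^{w}\subset\overline{\calSt^{w'}}\setminus\calSt^{w'}$, which has dimension strictly less than $0$, contradicting $\dim\calSt^w=0$. This gives the bijection with $W_\lambda\backslash W/W_\mu$.

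The main obstacle I expect is making the dimension-theoretic argument rigorous for stacks. This involves the lower bound from the equation count, and the strict dimension drop for boundaries of an irreducible stratum. I would handle both by working with the $P_\lambda\times P_\mu$-torsor $Z$, which is a scheme, where these are standard facts about varieties. Dimensions on $Z$ are shifted uniformly by $\dim P_\lambda+\dim P_\mu$.
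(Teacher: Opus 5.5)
Your proposal is correct and follows essentially the same route as the paper. The paper identifies each Bruhat stratum with $(\mathfrak{p}_\lambda\cap\mathfrak{p}_{w\mu})/(P_\lambda\cap P_{w\mu})$ (or its group analogue), deduces that the pieces $Z_w$ of the $P_\lambda\times P_\mu$-torsor $Z$ are all connected and smooth of the same dimension $\dim P_\lambda+\dim P_\mu$, and concludes exactly as you do. Your one addition is the equation-count lower bound, which is correct but unnecessary: every component is already the closure of some stratum, and all strata have dimension~$0$.
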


\begin{proof}
    We write down the proof for~$\mathfrak{g} / G$,
    and the case of~$G / G$ is similar.

    Let~$Z$ and~$Z_w$ be as in the proof of \cref{theorem:stratification},
    that is
    \begin{align*}
        Z & = \{ (x, g, y) \in \mathfrak{p}_\lambda \times G \times \mathfrak{p}_\mu \mid x = \mathrm{Ad}_g \, y \} \ ,
        \\
        Z_w & = \{ (x, g, y) \in Z \mid g \in P_\lambda w P_\mu \} \ ,
    \end{align*}
    where $w \in W_\lambda \backslash W / W_\mu$,
    so that $\calSt_{\lambda, \mu} \simeq P_\lambda \backslash Z / P_\mu$,
    and we have a stratification
    $Z = \bigsqcup_w Z_w$.
    But the theorem implies that
    $P_\lambda \backslash Z_w / P_\mu \simeq
    (\mathfrak{p}_\lambda \cap \mathfrak{p}_{w \mu}) / (P_\lambda \cap P_{w \mu})$,
    which is a $0$-dimensional smooth stack,
    so $Z_w$ is smooth of dimension
    $\dim P_\lambda + \dim P_\mu$,
    and it is connected since $P_\lambda$ and $P_\mu$ are connected.
    In particular, all~$Z_w$ are smooth of the same dimension and are connected,
    which implies the corollary.
\end{proof}

\noindent
This also implies that the derived Steinberg stacks in this example,
defined as the corresponding derived fibre products, are classical.
This is because they are quasi-smooth with virtual dimension~$0$,
which coincides with the dimension of their classical truncations.

%% file: mackey.tex
Throughout this section,
let~$S$ be a noetherian algebraic space,
and let~$\mathcal{X}$ be an algebraic stack of finite type over~$S$
with affine stabilizers and separated diagonal,
and with quasi-compact graded points as in \cref{def-qc-graded-points}.

Let $\mathcal{D}$ be a six-functor formalism satisfying
\cref{assumption-six-functors}.

\subsection{Induction and restriction functors}

Recall from \cref{def-hall-cat}
the Hall category
$\mathsf{Hall} (\mathcal{X})$ of~$\mathcal{X}$.
For a morphism
$\alpha \overset{\sigma}{\to} \beta$
in the Hall category of~$\mathcal{X}$,
recall from \cref{def-ind-res} the induction and restriction functors
\begin{alignat*}{5}
    \mathrm{Ind}_\sigma
    & =
    (p_\sigma)_! \, q_\sigma^*
    && \ , \quad
    & \mathrm{Ind}'_\sigma
    & =
    (p_\sigma)_* \, q_\sigma^!
    && \ \colon \quad
    & \mathcal{D} (\mathcal{X}_\beta)
    & \longrightarrow \mathcal{D} (\mathcal{X}_\alpha) \ ,
    \\
    \mathrm{Res}_\sigma
    & =
    (q_\sigma)_! \, p_\sigma^*
    && \ , \quad
    & \mathrm{Res}'_\sigma
    & =
    (q_\sigma)_* \, p_\sigma^!
    && \ \colon \quad
    & \mathcal{D} (\mathcal{X}_\alpha)
    & \longrightarrow \mathcal{D} (\mathcal{X}_\beta) \ .
\end{alignat*}
Recall also that when~$\mathcal{X}$ is smooth and $\Theta$-reductive,
we have an isomorphism
\begin{equation}
    \label{eq-ind-equals-ind-prime}
    \mathrm{Ind}'_\sigma \simeq \mathrm{Ind}_\sigma \langle \dim q_\sigma \rangle \ ,
\end{equation}
and that when~$\mathcal{D}$ satisfies Braden's theorem for~$\mathcal{X}$ as in
\Cref{def-braden-theorem},
we have an isomorphism
\begin{equation}
    \label{eq-res-equals-res-prime}
    \mathrm{Res}'_\sigma \simeq \mathrm{Res}_{-\sigma} \ .
\end{equation}

\begin{proposition}[Associativity]
    \label{prop-ind-res-associativity}
    Let
    $\alpha \overset{\sigma}{\to} \beta \overset{\tau}{\to} \gamma$
    be morphisms in $\mathsf{Hall} (\mathcal{X})$.
    Then we have isomorphisms
    \begin{alignat*}{2}
        \mathrm{Ind}_\sigma \circ \mathrm{Ind}_\tau
        & \simeq
        \mathrm{Ind}_{\sigma \uparrow \tau} \ ,
        & \qquad
        \mathrm{Ind}'_\sigma \circ \mathrm{Ind}'_\tau
        & \simeq
        \mathrm{Ind}'_{\smash{\sigma \uparrow \tau}} \ ,
        \\
        \mathrm{Res}_\tau \circ \mathrm{Res}_\sigma
        & \simeq
        \mathrm{Res}_{\sigma \uparrow \tau} \ ,
        & \qquad
        \mathrm{Res}'_\tau \circ \mathrm{Res}'_\sigma
        & \simeq
        \mathrm{Res}'_{\smash{\sigma \uparrow \tau}} \ .
    \end{alignat*}
\end{proposition}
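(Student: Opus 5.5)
The plan is to deduce all four isomorphisms from the Associativity \cref{thm-associativity}, combined with base change and the functoriality of the six operations. \cref{thm-associativity} provides the diagram
\begin{equation*}
    \mathcal{X}_\gamma \xleftarrow{q_\tau} \mathcal{X}_\tau^+ \xrightarrow{p_\tau} \mathcal{X}_\beta \xleftarrow{q_\sigma} \mathcal{X}_\sigma^+ \xrightarrow{p_\sigma} \mathcal{X}_\alpha \ ,
\end{equation*}
together with the apex $\mathcal{X}_{\sigma \uparrow \tau}^+ \simeq \mathcal{X}_\tau^+ \times_{\mathcal{X}_\beta} \mathcal{X}_\sigma^+$. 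Call the two projections $a \colon \mathcal{X}_{\sigma \uparrow \tau}^+ \to \mathcal{X}_\tau^+$ and $b \colon \mathcal{X}_{\sigma \uparrow \tau}^+ \to \mathcal{X}_\sigma^+$. The theorem identifies $q_{\sigma \uparrow \tau} \simeq q_\tau \circ a$ and $p_{\sigma \uparrow \tau} \simeq p_\sigma \circ b$. Only the fact that the middle square is cartesian (with $p_\tau \circ a \simeq q_\sigma \circ b$) and these two compatibilities are needed.

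For $\mathrm{Ind}$, I would compute
\begin{equation*}
    \mathrm{Ind}_\sigma \, \mathrm{Ind}_\tau = (p_\sigma)_! \, q_\sigma^* \, (p_\tau)_! \, q_\tau^* \simeq (p_\sigma)_! \, b_! \, a^* \, q_\tau^* \simeq (p_{\sigma \uparrow \tau})_! \, q_{\sigma \uparrow \tau}^* = \mathrm{Ind}_{\sigma \uparrow \tau} \ .
\end{equation*}
The middle step is proper base change $q_\sigma^* (p_\tau)_! \simeq b_! \, a^*$, which is part of any six-functor formalism in the sense of \textcite{ScholzeSixFunctor}. The last step is the compatibility of $(-)_!$ and $(-)^*$ with composition. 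The other three cases follow the same pattern:
\begin{itemize}
    \item For $\mathrm{Ind}'$, use the dual base change $q_\sigma^! \, (p_\tau)_* \simeq b_* \, a^!$, then compose.
    \item For $\mathrm{Res}$, compute $(q_\tau)_! \, p_\tau^* \, (q_\sigma)_! \, p_\sigma^*$. Base change in the same square, now in the direction $p_\tau^* (q_\sigma)_! \simeq a_! \, b^*$, gives $(q_\tau a)_! (p_\sigma b)^*$.
    \item For $\mathrm{Res}'$, use $p_\tau^! (q_\sigma)_* \simeq a_* \, b^!$.
\end{itemize}
Alternatively, the primed statements follow from the unprimed ones by passing to adjoints, since $\mathrm{Ind}_\sigma \dashv \mathrm{Res}'_\sigma$ and $\mathrm{Res}_\sigma \dashv \mathrm{Ind}'_\sigma$ and adjoints of isomorphic functors are isomorphic. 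I would take this route, as it avoids separately justifying the $*$/$!$ base change.

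The main obstacle is making sure the identifications from \cref{thm-associativity} really are compatible with the projection maps, so that the composites $q_\tau \circ a$ and $p_\sigma \circ b$ agree with $q_{\sigma \uparrow \tau}$ and $p_{\sigma \uparrow \tau}$. This is exactly what is meant by ``a functor to the category of spans'' in that theorem, so I would cite it directly. Everything else is formal. No smoothness or $\Theta$-reductivity is needed, only the quasi-compactness hypotheses under which \cref{thm-associativity} holds, which are in force throughout this section.
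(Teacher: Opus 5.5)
Your proposal is correct and is essentially the paper's own argument, which simply states that the four isomorphisms follow from \cref{thm-associativity} by applying base change formulas. Your explicit computations in the cartesian square, and the shortcut of deducing the primed statements from the unprimed ones via the adjunctions $\mathrm{Ind}_\sigma \dashv \mathrm{Res}'_\sigma$ and $\mathrm{Res}_\sigma \dashv \mathrm{Ind}'_\sigma$, fill in exactly those details.
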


\begin{proof}
    These are direct consequences of
    \cref{thm-associativity}
    by applying base change formulas.
\end{proof}

\begin{proposition}[Verdier dual]
    \label{prop-ind-res-verdier}
    Let
    $\alpha \overset{\sigma}{\to} \beta$
    be a morphism in $\mathsf{Hall} (\mathcal{X})$.
    Then we have isomorphisms
    \begin{equation*}
        \mathbb{D} \, \mathrm{Ind}_\sigma
        \simeq
        \mathrm{Ind}'_\sigma \, \mathbb{D} \ ,
        \qquad
        \mathbb{D} \, \mathrm{Res}_\sigma
        \simeq
        \mathrm{Res}'_\sigma \, \mathbb{D} \ .
    \end{equation*}
\end{proposition}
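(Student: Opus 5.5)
The plan is to derive both isomorphisms formally from the two identities $\mathbb{D}_{\mathcal{X}} \, f^* \simeq f^! \, \mathbb{D}_{\mathcal{Y}}$ and $\mathbb{D}_{\mathcal{Y}} \, f_! \simeq f_* \, \mathbb{D}_{\mathcal{X}}$, which hold for every morphism $f$ in $\mathsf{St}_S$ as noted in \cref{assumption-six-functors}. No geometric input beyond the shape of the defining correspondences $\mathcal{X}_\beta \xleftarrow{q_\sigma} \mathcal{X}_\sigma^+ \xrightarrow{p_\sigma} \mathcal{X}_\alpha$ from \cref{def-ind-res} should be needed. In particular, neither smoothness, $\Theta$-reductivity, nor Braden's theorem will be used.

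For the first isomorphism, write $\mathrm{Ind}_\sigma = (p_\sigma)_! \, q_\sigma^*$ and move $\mathbb{D}$ past each factor in turn:
\begin{equation*}
    \mathbb{D}_{\mathcal{X}_\alpha} \, (p_\sigma)_! \, q_\sigma^*
    \simeq (p_\sigma)_* \, \mathbb{D}_{\mathcal{X}_\sigma^+} \, q_\sigma^*
    \simeq (p_\sigma)_* \, q_\sigma^! \, \mathbb{D}_{\mathcal{X}_\beta}
    = \mathrm{Ind}'_\sigma \, \mathbb{D}_{\mathcal{X}_\beta} \ .
\end{equation*}
The second isomorphism is obtained in exactly the same way, with the roles of $p_\sigma$ and $q_\sigma$ exchanged:
\begin{equation*}
    \mathbb{D}_{\mathcal{X}_\beta} \, (q_\sigma)_! \, p_\sigma^*
    \simeq (q_\sigma)_* \, \mathbb{D}_{\mathcal{X}_\sigma^+} \, p_\sigma^*
    \simeq (q_\sigma)_* \, p_\sigma^! \, \mathbb{D}_{\mathcal{X}_\alpha}
    = \mathrm{Res}'_\sigma \, \mathbb{D}_{\mathcal{X}_\alpha} \ .
\end{equation*}

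The only point requiring care, and the only candidate for an obstacle, is that all stacks and morphisms involved lie in $\mathsf{St}_S$, so that the duality identities apply. Concretely, one needs $\mathcal{X}_\alpha$, $\mathcal{X}_\beta$ and $\mathcal{X}_\sigma^+$ to be of finite type over $S$ with affine stabilizers.

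This follows from the standing assumption that $\mathcal{X}$ has quasi-compact graded points. By \cref{def-qc-graded-points}, that assumption makes the relevant components of $\calGrad(\mathcal{X})$ and $\calFilt(\mathcal{X})$ quasi-compact. By \cref{def-grad-filt}, these components are locally of finite type over $S$ with affine stabilizers. The same then holds for the stacks $\mathcal{X}_\sigma^+$, via the identification $\mathcal{X}_\sigma^+ \simeq (\mathcal{X}_\alpha)_\lambda^+$ recalled after \cref{def-hall-cat}.

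Since the assumptions only ask for these isomorphisms to exist, not to be coherent, no naturality checks are required beyond composing the two displayed natural isomorphisms.
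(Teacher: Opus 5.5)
Your proposal is correct and is the paper's argument: commute $\mathbb{D}$ past the $!$-pushforward and the $*$-pullback one factor at a time, using the two duality identities recorded in \cref{assumption-six-functors}. Your extra check that $\mathcal{X}_\alpha$, $\mathcal{X}_\beta$ and $\mathcal{X}_\sigma^+$ lie in $\mathsf{St}_S$, via quasi-compact graded points, is a detail the paper leaves implicit, and it is fine.
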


\begin{proof}
    These follow from that Verdier duality intertwines the
    $*$-push and the $!$-push,
    and the $*$-pull and the $!$-pull,
    as mentioned in \cref{assumption-six-functors}.
\end{proof}

\begin{proposition}[Adjunction sequence]
    \label{prop-adjunction-seq}
    Assume that $\mathcal{X}$ is smooth and $\Theta$-reductive,
    and that~$\mathcal{D}$ satisfies Braden's theorem for~$\mathcal{X}$ as in
    \Cref{def-braden-theorem}.
    Then for each morphism
    $\alpha \overset{\sigma}{\to} \beta$
    in $\mathsf{Hall}(\mathcal{X})$,
    we have an infinite sequence of adjunctions
    \begin{equation*}
        \cdots
        \dashv
        \mathrm{Ind}_{\sigma}
        \dashv
        \mathrm{Res}_{-\sigma}
        \dashv
        \mathrm{Ind}_{-\sigma} \langle d_- \rangle
        \dashv
        \mathrm{Res}_{\sigma} \langle -d_- \rangle
        \dashv
        \mathrm{Ind}_{\sigma} \langle d_- + d_+ \rangle
        \dashv
        \mathrm{Res}_{-\sigma} \langle -d_- - d_+ \rangle
        \dashv
        \cdots \ ,
    \end{equation*}
    where $d_{\pm} = \dim q_{\pm \sigma}$.
\end{proposition}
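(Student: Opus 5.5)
The plan is to build the sequence from two basic adjunctions, $\mathrm{Ind}_\tau \dashv \mathrm{Res}'_\tau$ and $\mathrm{Res}_\tau \dashv \mathrm{Ind}'_\tau$ for $\tau \in \{\sigma, -\sigma\}$, which hold by definition (\cref{def-ind-res}). These are then converted into unprimed functors using \cref{eq-ind-equals-ind-prime} and \cref{eq-res-equals-res-prime}. Throughout, $-\sigma$ denotes the opposite chamber. It is again a morphism $\alpha \overset{-\sigma}{\to} \beta$ in $\mathsf{Hall} (\mathcal{X})$, as already used in \cref{def-braden-theorem}, so both identifications apply to it as well as to~$\sigma$. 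Since $\mathcal{X}$ is smooth and $\Theta$-reductive, $q_{\pm\sigma}$ is smooth and $p_{\pm\sigma}$ is proper. If $q_{\pm\sigma}$ does not have constant relative dimension, I would work one connected component of $\mathcal{X}_\beta$ at a time.

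The first step is a bookkeeping lemma: Tate twists commute with all four functors. Write $\langle n \rangle = (-) \otimes \pi^* \mathbb{T}^{\otimes n}$. Then $f^*$ is monoidal, and $f_!$ commutes with tensoring by a pulled-back invertible object by the projection formula. Hence $\mathrm{Ind}_\tau \langle n \rangle \simeq \langle n \rangle\, \mathrm{Ind}_\tau$ and $\mathrm{Res}_\tau \langle n \rangle \simeq \langle n \rangle\, \mathrm{Res}_\tau$. Since $\langle n \rangle$ is an autoequivalence with inverse $\langle -n \rangle$, we get: if $F \dashv G$, then $F\langle n \rangle \dashv G \langle -n \rangle$ for every~$n$.

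Next I chain the adjunctions as follows.
\begin{itemize}
    \item $\mathrm{Ind}_\sigma \dashv \mathrm{Res}'_\sigma \simeq \mathrm{Res}_{-\sigma}$, by definition and Braden's theorem.
    \item $\mathrm{Res}_{-\sigma} \dashv \mathrm{Ind}'_{-\sigma} \simeq \mathrm{Ind}_{-\sigma}\langle d_- \rangle$, by \cref{eq-ind-equals-ind-prime}.
    \item $\mathrm{Ind}_{-\sigma} \dashv \mathrm{Res}'_{-\sigma} \simeq \mathrm{Res}_{\sigma}$, by Braden's theorem applied to $-\sigma$, using $-(-\sigma) = \sigma$. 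Twisting gives $\mathrm{Ind}_{-\sigma}\langle d_- \rangle \dashv \mathrm{Res}_\sigma \langle -d_- \rangle$.
    \item $\mathrm{Res}_\sigma \dashv \mathrm{Ind}'_\sigma \simeq \mathrm{Ind}_\sigma \langle d_+ \rangle$. Twisting by $\langle -d_- \rangle$ gives $\mathrm{Res}_\sigma\langle -d_- \rangle \dashv \mathrm{Ind}_\sigma \langle d_- + d_+ \rangle$.
\end{itemize}
This returns to $\mathrm{Ind}_\sigma$ shifted by $\langle d_- + d_+ \rangle$. Applying the twisting principle to the whole four-step block therefore yields the $2$-periodic pattern (up to twist) in both directions. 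To the right it continues with $\mathrm{Res}_{-\sigma}\langle -d_- - d_+ \rangle$ and so on. To the left it gives, for instance, $\mathrm{Res}_\sigma\langle -2d_- - d_+ \rangle \dashv \mathrm{Ind}_\sigma$. Since adjoints are unique up to isomorphism, the sequence is well defined.

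I expect the main point of care to be the twist bookkeeping: making sure the identifications \cref{eq-ind-equals-ind-prime} and \cref{eq-res-equals-res-prime} are applied with the correct chamber ($\dim q_{-\sigma}$ versus $\dim q_\sigma$), and that the projection formula really lets the twist pass through $p_!$ and $q^*$. The rest is formal.
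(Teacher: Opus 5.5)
Your proposal is correct and follows the paper's own proof: the paper also just combines the defining adjunctions $\mathrm{Ind}_{\pm\sigma} \dashv \mathrm{Res}'_{\pm\sigma}$ and $\mathrm{Res}_{\pm\sigma} \dashv \mathrm{Ind}'_{\pm\sigma}$ with the identifications \cref{eq-ind-equals-ind-prime} and \cref{eq-res-equals-res-prime}. Your handling of the Tate twists (passing them through $p_!$ and $q^*$ via the projection formula, and checking that the four-step block returns to $\mathrm{Ind}_\sigma\langle d_- + d_+\rangle$) spells out bookkeeping that the paper leaves implicit.
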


\begin{proof}
    This is a combination of the adjunctions
    $\mathrm{Ind}_{\sigma} \dashv \mathrm{Res}'_{\sigma}$
    and $\mathrm{Res}_{\sigma} \dashv \mathrm{Ind}'_{\sigma}$
    with the isomorphisms
    \cref{eq-ind-equals-ind-prime,eq-res-equals-res-prime}.
\end{proof}

\begin{proposition}[Induction preserves purity]
    \label{prop-ind-preserves-purity}
    Assume we are in the situation of
    mixed Hodge modules as in
    \cref{eg-mhm},
    and assume that $\mathcal{X}$ is smooth and $\Theta$-reductive.
    Let $\alpha \overset{\sigma}{\to} \beta$ be a morphism in $\mathsf{Hall}(\mathcal{X})$.

    Then the functors $\mathrm{Ind}_{\sigma}$ and $\mathrm{Ind}'_{\sigma}$
    preserve pure mixed Hodge modules of each weight.
\end{proposition}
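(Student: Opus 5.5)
The plan is to combine Saito's weight formalism for mixed Hodge modules on stacks with the geometry of the induction correspondence. Recall the standard weight estimates: for a morphism $f$, the functors $f_!$ and $f^*$ do not increase weights, while $f_*$ and $f^!$ do not decrease weights. For proper $f$ we have $f_! \simeq f_*$, so $f_*$ preserves purity of weight~$w$. For smooth $f$ of relative dimension~$n$ we have $f^! \simeq f^* \langle n \rangle$, and the Tate twist $\langle n \rangle = (n)[2n]$ preserves weight. Hence both $f^*$ and $f^!$ preserve purity. On stacks these statements are part of the formalism of \cref{eg-mhm}, via \textcite{tubach-2025-mixed}. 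In any case they reduce to the scheme case by smooth descent, since weights are detected after smooth pullback to an atlas, up to shift.

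With these estimates the argument is short. Since $\mathcal{X}$ is smooth and $\Theta$-reductive, \cref{def-ind-res} gives that $p_\sigma$ is proper and $q_\sigma$ is smooth, with $\mathcal{X}_\sigma^+ \simeq (\mathcal{X}_\alpha)^+_\lambda$ for a suitable~$\lambda$. Let $M$ be pure of weight~$w$. Then $q_\sigma^* M$ is pure of weight~$w$: it has weight at most~$w$ because $*$-pullback does not increase weights, and weight at least~$w$ because $q_\sigma^* M \simeq q_\sigma^! M \langle -d \rangle$. Applying $(p_\sigma)_! \simeq (p_\sigma)_*$ preserves both bounds, so $\mathrm{Ind}_\sigma M$ is pure of weight~$w$. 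The claim for $\mathrm{Ind}'_\sigma$ then follows from \cref{eq-ind-equals-ind-prime}, since the twist $\langle \dim q_\sigma \rangle$ preserves weight.

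The main obstacle is justifying the weight formalism for non-representable morphisms between stacks. The proper morphism $p_\sigma$ may be non-representable, and \cref{assumption-six-functors}~\cref{item-assumption-proper-pushforward} only gives $f_* \simeq f_!$ for representable proper~$f$. I would first handle this by noting that $p_\sigma$ is representable in the cases of interest, since $\mathcal{X}^+_\lambda \to \mathcal{X}$ is representable on stabilizers, because filtrations with a fixed limit are injective on automorphisms. Failing that, I would check $(p_\sigma)_! \simeq (p_\sigma)_*$ directly. One route is to factor through a representable proper map after a global quotient presentation, available étale locally by \cref{lem-local-str}. The other is to use Braden's theorem as in \cref{prop-adjunction-seq}.

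A second, subtler point is that $(p_\sigma)_*$ on stacks may send bounded objects to unbounded ones. Purity should therefore be interpreted in the ind-completed or unbounded sense of \textcite{tubach-2025-mixed}, where the weight estimates for $f_*$ and $f^*$ are established in general.
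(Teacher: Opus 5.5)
Your argument is correct and is the same as the paper's: $\mathrm{Ind}_\sigma = (p_\sigma)_!\,q_\sigma^*$ is a smooth pullback followed by a proper pushforward, each of which preserves purity of a given weight, and $\mathrm{Ind}'_\sigma$ differs from it only by the weight-preserving twist in \cref{eq-ind-equals-ind-prime}. Your representability concern is resolved essentially as you suggest, and the paper tacitly uses this in \cref{def-ind-res}; the relevant injectivity is $\mathrm{Aut}(f) \to \mathrm{Aut}(f(1))$, which follows from the separated diagonal and the density of $1 \in \mathbb{A}^1/\mathbb{G}_\mathrm{m}$, not from anything about the limit $f(0)$, so the fallback routes and the boundedness caveat are unnecessary.
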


\begin{proof}
    Under the assumptions of the proposition,
    the induction functors are compositions of
    smooth pullbacks and proper pushforwards,
    which both preserve purity and weight.
\end{proof}

\begin{proposition}[Restriction preserves purity]
    \label{prop-res-preserves-purity}
    Assume we are in the situation of mixed Hodge modules
    as in \cref{eg-mhm}.
    Let $\alpha \overset{\sigma}{\to} \beta$ be a morphism in $\mathsf{Hall}(\mathcal{X})$.

    Then the functors
    $\mathrm{Res}_{\sigma}$ and $\mathrm{Res}'_{\sigma}$
    preserve pure mixed Hodge modules of each weight.
\end{proposition}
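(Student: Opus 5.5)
The plan is the standard hyperbolic-localization argument: use Braden's theorem to write each restriction functor in two ways, one of which cannot raise weights and the other of which cannot lower them.

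First I would fix the weight conventions. On $\mathsf{MHM}(-;\mathbb{Q})$ over $\mathbb{C}$, as set up by \textcite{tubach-2025-mixed} and extended to stacks via smooth descent, I take the usual behaviour of weights under the six operations. For any morphism $f$ in $\mathsf{St}_{\mathbb{C}}$, the functors $f_!$ and $f^*$ preserve objects of weight $\leq w$. The functors $f_*$ and $f^!$ preserve objects of weight $\geq w$. An object is pure of weight $w$ exactly when it has weight both $\leq w$ and $\geq w$. I would state this as a recollection and cite it rather than reprove it.

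Next, fix a morphism $\alpha \overset{\sigma}{\to} \beta$ in $\mathsf{Hall}(\mathcal{X})$ and an object $E \in \mathsf{MHM}(\mathcal{X}_\alpha)$ that is pure of weight $w$.
\begin{enumerate}
    \item By definition $\mathrm{Res}_\sigma(E) = (q_\sigma)_!\,p_\sigma^* E$, a $*$-pullback followed by a $!$-pushforward. So $\mathrm{Res}_\sigma(E)$ has weight $\leq w$.
    \item Braden's theorem holds for mixed Hodge modules on all stacks in $\mathsf{St}_{\mathbb{C}}$ by \cite[Propositions~A.1 and~7.2]{kinjo2024cohomological}. By \cref{eq-res-equals-res-prime} applied to $-\sigma$, this gives $\mathrm{Res}_\sigma \simeq \mathrm{Res}'_{-\sigma} = (q_{-\sigma})_*\,p_{-\sigma}^!$. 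This is a $!$-pullback followed by a $*$-pushforward, so $\mathrm{Res}_\sigma(E)$ also has weight $\geq w$.
    \item Combining the two bounds, $\mathrm{Res}_\sigma(E)$ is pure of weight $w$.
    \item The same argument with $\sigma$ and $-\sigma$ exchanged, using $\mathrm{Res}'_\sigma \simeq \mathrm{Res}_{-\sigma}$, handles $\mathrm{Res}'_\sigma$.
\end{enumerate}
No smoothness or $\Theta$-reductivity is needed here, unlike in \cref{prop-ind-preserves-purity}.

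The main obstacle I expect is the first step: justifying the weight formalism on stacks, where pushforwards such as $(q_{-\sigma})_*$ are not bounded and the objects lie in ind-categories. I would check that Tubach's definition of weights on stacks is compatible with the stated inequalities for $f_!, f^*, f^!, f_*$ between quasi-compact stacks of finite type. If needed, I would check this smooth-locally, using that smooth pullback is exact up to the shift and Tate twist in \cref{assumption-six-functors}~\cref{item-assumption-smooth-pullback} and hence preserves and detects weights.

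A secondary point is that the Braden isomorphism must be an isomorphism in $\mathsf{MHM}$, not only on underlying constructible sheaves. This holds because the comparison map is constructed within the six-functor formalism, and whether it is an isomorphism can be detected after forgetting to constructible sheaves.
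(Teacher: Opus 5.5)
Your proposal is correct and follows essentially the paper's argument. The paper likewise uses Braden's theorem (via Kinjo--Park--Safronov) to get $\mathrm{Res}_{\sigma} \simeq \mathrm{Res}'_{-\sigma}$ and $\mathrm{Res}_{-\sigma} \simeq \mathrm{Res}'_{\sigma}$, then combines the bound of weight $\leq w$ for $\mathrm{Res}_{\pm\sigma}$ with the bound of weight $\geq w$ for $\mathrm{Res}'_{\pm\sigma}$ to conclude purity; the extra checks you flag (weight behaviour on stacks, and the Braden isomorphism holding in $\mathsf{MHM}$) are taken for granted there via the cited references.
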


\begin{proof}
    By \textcite[Proposition~A.1 and Remark~A.3]{kinjo2024cohomological},
    Braden's theorem in the sense of \cref{def-braden-theorem} holds.

    Let $E \in \mathcal{D} (\mathcal{X}_\alpha)_w$ be a pure object of weight~$w$.
    Then $\mathrm{Res}_{\pm \sigma} \, E \in \mathcal{D} (\mathcal{X}_\beta)_{\leq w}$
    and $\mathrm{Res}'_{\pm \sigma} \, E \in \mathcal{D} (\mathcal{X}_\beta)_{\geq w}$.
    But since
    $\mathrm{Res}_\sigma \, E \simeq \mathrm{Res}'_{-\sigma} \, E$
    and
    $\mathrm{Res}_{-\sigma} \, E \simeq \mathrm{Res}'_\sigma \, E$,
    they must all lie in $\mathcal{D} (\mathcal{X}_\beta)_{w}$.
\end{proof}

\begin{proposition}[Restrictions of constant/dualizing objects]
    Assume that~$\mathcal{X}$ is smooth,
    and let $\alpha\xrightarrow{\sigma}\beta$
    be a morphism in $\mathsf{Hall} (\mathcal{X})$. Then we have
    \[
        \mathrm{Res}_{\sigma} \, \mathbb{1}_{\mathcal{X}_{\alpha}}
        \simeq \mathbb{1}_{\mathcal{X}_{\beta}} \langle -{\dim q_\sigma} \rangle \ ,
        \qquad
        \mathrm{Res}'_{\sigma} \, \mathbb{D1}_{\mathcal{X}_{\alpha}}
        \simeq (\mathbb{D1}_{\mathcal{X}_{\beta}})
        \langle \dim q_\sigma \rangle \ .
    \]
\end{proposition}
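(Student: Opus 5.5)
We compute directly from the definitions. Write $\mathrm{Res}_\sigma \, \mathbb{1}_{\mathcal{X}_\alpha} = (q_\sigma)_! \, p_\sigma^* \, \mathbb{1}_{\mathcal{X}_\alpha} \simeq (q_\sigma)_! \, \mathbb{1}_{\mathcal{X}_\sigma^+}$, since $*$-pullback is symmetric monoidal and so preserves units. It remains to identify $(q_\sigma)_! \, q_\sigma^* \, \mathbb{1}_{\mathcal{X}_\beta}$. The key input is that $q_\sigma \colon \mathcal{X}_\sigma^+ \to \mathcal{X}_\beta$ is smooth of relative dimension $d = \dim q_\sigma$ when $\mathcal{X}$ is smooth. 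This holds because smoothness of $\mathcal{X}$ makes $\mathcal{X}_\beta$ and $\mathcal{X}_\sigma^+$ smooth, and $q_\sigma$ is then the projection of a vector-bundle-like stack given by the positive weight part of the tangent complex, as in \cite{halpern-leistner-instability}. In addition, $q_\sigma$ is an $\mathbb{A}^1$-deformation retract with section $s_\sigma$. If $d$ is not constant, we work componentwise on $\mathcal{X}_\beta$.

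Given these, \cref{lem-a1-contract} gives $(q_\sigma)_* \, q_\sigma^! \simeq \langle d \rangle$ and $\mathrm{id} \simeq (q_\sigma)_* \, q_\sigma^*$. To handle the $!$-pushforward, we apply Verdier duality. First, $\mathbb{D}(q_\sigma)_! \simeq (q_\sigma)_* \mathbb{D}$. Next, smooth pullback gives $q_\sigma^! \simeq q_\sigma^* \langle d \rangle$. Then, using $\mathbb{D}\,\mathbb{1}_{\mathcal{X}_\beta} \simeq \omega_{\mathcal{X}_\beta}$, we get
\begin{equation*}
    \mathbb{D} \, (q_\sigma)_! \, q_\sigma^* \, \mathbb{1}
    \simeq (q_\sigma)_* \, q_\sigma^! \, \omega_{\mathcal{X}_\beta}
    \simeq (q_\sigma)_* \, q_\sigma^* \, \omega_{\mathcal{X}_\beta} \langle d \rangle
    \simeq \omega_{\mathcal{X}_\beta} \langle d \rangle \ .
\end{equation*}
Dualizing back requires biduality for $\mathbb{1}$ and $\omega$ on the smooth stack $\mathcal{X}_\beta$. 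This follows from $\omega_{\mathcal{X}_\beta} \simeq \mathbb{1} \langle \dim \mathcal{X}_\beta \rangle$, which is invertible. We also use that $\mathbb{D}$ turns $\langle d \rangle$ into $\langle -d \rangle$, since the twist is tensoring with an invertible object. This yields $\mathrm{Res}_\sigma \, \mathbb{1} \simeq \mathbb{1}_{\mathcal{X}_\beta} \langle -d \rangle$.

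The second formula then follows from \cref{prop-ind-res-verdier}:
\begin{equation*}
    \mathrm{Res}'_\sigma \, \mathbb{D} \, \mathbb{1} \simeq \mathbb{D} \, \mathrm{Res}_\sigma \, \mathbb{1} \simeq \mathbb{D} (\mathbb{1} \langle -d \rangle) \simeq (\mathbb{D} \mathbb{1}) \langle d \rangle \ .
\end{equation*}
Alternatively, one can compute directly: $p_\sigma^! \, \omega \simeq \omega$ and $(q_\sigma)_* \, \omega_{\mathcal{X}_\sigma^+} \simeq (q_\sigma)_* \, q_\sigma^! \, \omega_{\mathcal{X}_\beta} \simeq \omega \langle d \rangle$ by \cref{lem-a1-contract}, which avoids biduality entirely. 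In fact, the cleanest route is to prove the $\mathrm{Res}'$ statement first in this way and deduce the $\mathrm{Res}$ statement by duality.

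The main obstacle is justifying smoothness of $q_\sigma$ with relative dimension $\dim q_\sigma$ using only smoothness of $\mathcal{X}$, without $\Theta$-reductivity. The plan is to cite the description of the cotangent complex of $\calFilt$ and $\calGrad$ from \cite{halpern-leistner-instability}. Smoothness of $\mathcal{X}$ forces $\mathcal{X}_\sigma^+$ and $\mathcal{X}_\beta$ to be smooth, and the relative cotangent complex of $q_\sigma$ is the positive-weight part of $\mathbb{L}_{\mathcal{X}}$ restricted to $\mathcal{X}_\beta$, which is a vector bundle.
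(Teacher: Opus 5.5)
Your reduction to computing $(q_\sigma)_! \, q_\sigma^* \, \mathbb{1}_{\mathcal{X}_\beta}$ and your use of \cref{lem-a1-contract} are right. Your direct computation of the second formula, $\mathrm{Res}'_\sigma \, \omega_{\mathcal{X}_\alpha} \simeq (q_\sigma)_* \, q_\sigma^! \, \omega_{\mathcal{X}_\beta} \simeq \omega_{\mathcal{X}_\beta} \langle d \rangle$, is also correct.

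The gap is the step ``dualizing back'' for the first formula. From $\mathbb{D} \bigl( (q_\sigma)_! \, \mathbb{1} \bigr) \simeq \omega_{\mathcal{X}_\beta} \langle d \rangle$ you can only conclude $\mathbb{D} \mathbb{D} \bigl( (q_\sigma)_! \, \mathbb{1} \bigr) \simeq \mathbb{1} \langle -d \rangle$. To say anything about $(q_\sigma)_! \, \mathbb{1}$ itself, you need the canonical map $E \to \mathbb{D} \mathbb{D} E$ to be an isomorphism for the \emph{unknown} object $E = (q_\sigma)_! \, \mathbb{1}$. Biduality for $\mathbb{1}$ and $\omega$ says nothing about this. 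Nothing in \cref{assumption-six-functors} supplies it either: $\mathcal{D}(-)$ is a presentable, ind-type category on which $\mathbb{D} = \calHom (-, \omega)$ is not an involution, and you have no a priori finiteness for $(q_\sigma)_! \, \mathbb{1}$.

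The same issue affects your ``cleanest route'' at the end. Deducing the $\mathrm{Res}$ formula from the $\mathrm{Res}'$ formula by duality is exactly the direction that needs reflexivity of $\mathrm{Res}_\sigma \, \mathbb{1}$. Only the opposite implication is formal: apply $\mathbb{D}$, use \cref{prop-ind-res-verdier}, and use $\mathbb{D} (E \langle -d \rangle) \simeq (\mathbb{D} E) \langle d \rangle$.

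The missing idea is to use adjunction instead of duality. Since $q_\sigma^* \dashv (q_\sigma)_*$ and $(q_\sigma)_! \dashv q_\sigma^!$, the functor $(q_\sigma)_! \, q_\sigma^*$ is left adjoint to $(q_\sigma)_* \, q_\sigma^!$. The latter is $\langle d \rangle$ by \cref{lem-a1-contract}, and the left adjoint of $\langle d \rangle$ is $\langle -d \rangle$. Uniqueness of adjoints then gives $(q_\sigma)_! \, q_\sigma^* \simeq \langle -d \rangle$, with no reflexivity needed. This is the paper's argument; the second formula then follows by applying $\mathbb{D}$, or by your direct computation.

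A minor point on your smoothness discussion: the relative cotangent complex of $q_\sigma$ is not a vector bundle in general. For $\mathcal{X} = \mathrm{B} G$, the map $\mathrm{B} P_\lambda \to \mathrm{B} L_\lambda$ has relative dimension $-\dim N_\lambda$, with $N_\lambda$ the unipotent radical. It is, however, a perfect complex in cohomological degrees $[0, 1]$. That is what smoothness of a morphism of algebraic stacks requires, so your conclusion that $q_\sigma$ is smooth stands.
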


\begin{proof}
    By \cref{prop-ind-res-verdier},
    it is enough to show the first formula. We have
    \begin{align*}
        \mathrm{Res}_{\sigma} \, \mathbb{1}_{\mathcal{X}_{\alpha}}
        \simeq (q_\sigma)_! \, p_{\sigma}^* \, \mathbb{1}_{\mathcal{X}_{\alpha}}
        \simeq (q_\sigma)_! \, \mathbb{1}_{\mathcal{X}_{\sigma}^+}
        \simeq (q_\sigma)_! \, q_{\sigma}^* \, \mathbb{1}_{\mathcal{X}_{\beta}}
        \simeq \mathbb{1}_{\mathcal{X}_{\beta}} \langle -{\dim q_{\sigma}} \rangle \ ,
    \end{align*}
    where the last follows from the fact that~$q_\sigma$
    is smooth and an $\mathbb{A}^1$-deformation retract,
    so we may apply \cref{lem-a1-contract},
    and take left adjoints of the statement of the lemma.
\end{proof}

\subsection{The Mackey formula}

Let~$\mathcal{X}$ be a stack over a base~$S$ as before,
and let $\lambda, \mu \in \mathrm{CL} (\mathcal{X})$ be two elements.
We wish to study the composition of the functors
\begin{equation}
    \label{eq-st-functor}
    \mathrm{St}_{\lambda, \mu}
    = \mathrm{Res}_\mu \circ \mathrm{Ind}_\lambda \ ,
    \quad
    \mathrm{St}'_{\smash{\lambda, \mu}}
    = \mathrm{Res}'_\mu \circ \mathrm{Ind}'_\lambda
    \colon \quad
    \mathcal{D} (\mathcal{X}_\lambda)
    \longrightarrow \mathcal{D} (\mathcal{X}_\mu) \ ,
\end{equation}
which we call the \emph{Steinberg functor}.
We have the adjunction
$\mathrm{St}_{\lambda, \mu}\dashv \mathrm{St}'_{\smash{\mu, \lambda}}$.

More generally, for morphisms
$\beta \overset{\sigma}{\gets} \alpha \overset{\tau}{\to} \gamma$
in the category $\mathsf{Hall} (\mathcal{X})$,
we may consider the composition
\begin{equation}
    \label{eq-st-functor-cones}
    \mathrm{St}_{\sigma, \tau}
    = \mathrm{Res}_\tau \circ \mathrm{Ind}_\sigma \ ,
    \quad
    \mathrm{St}'_{\sigma, \tau}
    = \mathrm{Res}'_\tau \circ \mathrm{Ind}'_\sigma
    \colon \quad
    \mathcal{D} (\mathcal{X}_\beta)
    \longrightarrow \mathcal{D} (\mathcal{X}_\gamma) \ ,
\end{equation}
which agree with~\cref{eq-st-functor}
with~$\mathcal{X}_\alpha$ in place of~$\mathcal{X}$,
and with $\lambda$ and~$\mu$ chosen generically in~$\sigma$ and~$\tau$, respectively.
Therefore, we work with~\cref{eq-st-functor} below
without losing generality.

By the definition of the Steinberg stack
$\calSt_{\lambda, \mu} = \calSt (\mathcal{X})_{\lambda, \mu}$,
we have a diagram
\begin{equation*}
    \begin{tikzcd}[column sep={4em, between origins}, row sep={2.5em, between origins}]
        &&
        \calSt_{\lambda, \mu}
        \ar[dl] \ar[dr]
        \ar[ddll, bend right=20, shift right, "\phi_{\lambda, \mu}"']
        \ar[ddrr, bend left=20, shift left, "\phi_{\mu, \lambda}"]
        \ar[dd, phantom, pos=.2, "\ulcorner" rotate=-45]
        &&
        \\
        &
        \mathcal{X}_\lambda^+
        \ar[dl, "q_\lambda"] \ar[dr, "p_\lambda"']
        &&
        \mathcal{X}_\mu^+
        \ar[dl, "p_\mu"] \ar[dr, "q_\mu"']
        \\
        \mathcal{X}_\lambda && \mathcal{X} && \mathcal{X}_\mu \rlap{ ,}
    \end{tikzcd}
\end{equation*}
where the square is cartesian, so that by the base change formula, we have
\begin{equation*}
    \mathrm{St}_{\lambda, \mu} \simeq
    (\phi_{\mu, \lambda})_! \,
    \phi_{\lambda, \mu}^* \ ,
    \qquad
    \mathrm{St}'_{\smash{\lambda, \mu}} \simeq
    (\phi_{\mu, \lambda})_* \,
    \phi_{\lambda, \mu}^! \ .
\end{equation*}

When~$\mathcal{X}$ satisfies \cref{assumption-stratification},
we use the stratification of $\calSt_{\lambda, \mu}$
given by \cref{theorem:stratification}
to produce a filtration of the functors
$\mathrm{St}_{\lambda, \mu}$ and~$\mathrm{St}'_{\smash{\lambda, \mu}}$.
Namely, let~$I$ be the set of relative positions of~$\lambda$ and~$\mu$,
with the total order $\leq$ as given by the theorem.
For each $i \in I$, we have the locally closed stratum
\begin{equation*}
    j_i \colon
    \calSt_{\lambda, \mu}^i =
    (\mathcal{X}_\lambda^+)_{\mu_i}^+ \simeq
    (\mathcal{X}_\mu^+)_{\lambda_i}^+
    \longhookrightarrow \calSt_{\lambda, \mu} \ ,
\end{equation*}
where $\lambda_i, \mu_i$ are lifts as in \cref{def-relative-position}.
We let $\phi_{\smash{\lambda, \mu}}^i = \phi_{\lambda, \mu} \circ j_i$
and $\phi_{\smash{\mu, \lambda}}^i = \phi_{\mu, \lambda} \circ j_i$,
and define the partial Steinberg functors
\begin{equation*}
    \mathrm{St}^i_{\smash{\lambda, \mu}} =
    (\phi_{\smash{\mu, \lambda}}^i)_! \,
    (\phi_{\smash{\lambda, \mu}}^i)^* \ ,
    \quad
    \mathrm{St}'^i_{\smash{\lambda, \mu}} =
    (\phi_{\smash{\mu, \lambda}}^i)_* \,
    (\phi_{\smash{\lambda, \mu}}^i)^!
    \colon \quad
    \mathcal{D} (\mathcal{X}_\lambda)
    \longrightarrow \mathcal{D} (\mathcal{X}_\mu) \ .
\end{equation*}
For each $i \in I$, we also consider the open substack
\begin{equation*}
    j_{\geq i} \colon
    \calSt_{\smash{\lambda, \mu}}^{\geq i} = \bigcup_{i' \geq i} \calSt_{\smash{\lambda, \mu}}^{\smash{i'}}
    \longhookrightarrow \calSt_{\lambda, \mu} \ ,
\end{equation*}
and define the partial Steinberg functors
$\mathrm{St}^{\geq i}_{\smash{\lambda, \mu}}$ and
$\mathrm{St}'^{\geq i}_{\smash{\lambda, \mu}}$
similarly.
We have localization exact triangles
\begin{equation*}
    (j_{> i})_! \, j_{> i}^* \longrightarrow
    (j_{\geq i})_! \, j_{\geq i}^* \longrightarrow
    (j_i)_! \, j_i^* \ ,
    \qquad
    (j_i)_* \, j_i^! \longrightarrow
    (j_{\geq i})_* \, j_{\geq i}^! \longrightarrow
    (j_{> i})_* \, j_{> i}^! \ ,
\end{equation*}
where the second one is given by
\cref{assumption-six-functors}~\cref{item-assumption-localization},
and the first one is obtained as its left adjoint.
They give exact triangles
\begin{equation*}
    \mathrm{St}_{\smash{\lambda, \mu}}^{> i} \longrightarrow
    \mathrm{St}_{\smash{\lambda, \mu}}^{\geq i} \longrightarrow
    \mathrm{St}_{\smash{\lambda, \mu}}^{i} \ ,
    \qquad
    \mathrm{St}_{\smash{\lambda, \mu}}'^{i} \longrightarrow
    \mathrm{St}_{\smash{\lambda, \mu}}'^{\geq i} \longrightarrow
    \mathrm{St}_{\smash{\lambda, \mu}}'^{> i} \ .
\end{equation*}
This proves the following proposition.

\begin{proposition}
    \label{prop-st-filtrations}
    Let~$\mathcal{X}$ satisfy \cref{assumption-stratification}.
    Then $\mathrm{St}_{\smash{\lambda, \mu}}$
    admits a filtration indexed by $(I, \geq)$
    with associated pieces~$\mathrm{St}_{\smash{\lambda, \mu}}^i$,
    and $\mathrm{St}'_{\smash{\lambda, \mu}}$
    admits a filtration indexed by $(I, \leq)$
    with associated pieces~$\mathrm{St}_{\smash{\lambda, \mu}}'^i$.
    \qed
\end{proposition}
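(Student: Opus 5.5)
The filtration is built directly from the stratification of the Steinberg stack. The first step is to identify $\mathrm{St}_{\lambda,\mu}$ with $(\phi_{\mu,\lambda})_!\,\phi_{\lambda,\mu}^*$ by proper base change along the cartesian square defining $\calSt_{\lambda,\mu}$. Then \cref{theorem:stratification} gives a finite totally ordered set $I$ of relative positions, locally closed strata $j_i\colon \calSt^i_{\lambda,\mu}\hookrightarrow \calSt_{\lambda,\mu}$, and open unions $j_{\geq i}$ with $\calSt^{\geq i}_{\lambda,\mu}=\calSt^{>i}_{\lambda,\mu}\sqcup \calSt^i_{\lambda,\mu}$. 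Here $\calSt^{>i}_{\lambda,\mu}$ is open and $\calSt^i_{\lambda,\mu}$ is closed in $\calSt^{\geq i}_{\lambda,\mu}$.

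Next I produce the localization triangles on $\calSt^{\geq i}_{\lambda,\mu}$. The $(*,!)$-triangle $(j_i)_*j_i^!\to \mathrm{id}\to (j_{>i})_*j_{>i}^!$ is exactly \cref{assumption-six-functors}~\cref{item-assumption-localization}, using $i_*\simeq i_!$ for the closed immersion and $j^!\simeq j^*$ for the open one. Passing to left adjoints term by term gives the $(!,*)$-triangle $(j_{>i})_!j_{>i}^*\to \mathrm{id}\to (j_i)_!j_i^*$. Left adjoints of an exact triangle of exact functors form an exact triangle with reversed order, since the adjoint of the fibre is the cofibre of the adjoints.

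Conjugating by the relevant functors then gives the triangles of Steinberg functors. I restrict to $\calSt^{\geq i}_{\lambda,\mu}$ via $j_{\geq i}$, precompose with $(\phi_{\lambda,\mu}\circ j_{\geq i})^*$, and postcompose with $(\phi_{\mu,\lambda}\circ j_{\geq i})_!$; the primed version uses $(-)^!$ and $(-)_*$ instead. Functoriality of the six operations under composition, namely $(fg)_!\simeq f_!g_!$ and $(fg)^*\simeq g^*f^*$, identifies the terms with $\mathrm{St}^{>i}$, $\mathrm{St}^{\geq i}$ and $\mathrm{St}^i$. This yields
\[
\mathrm{St}^{>i}_{\lambda,\mu}\to \mathrm{St}^{\geq i}_{\lambda,\mu}\to \mathrm{St}^{i}_{\lambda,\mu},
\qquad
\mathrm{St}'^{i}_{\lambda,\mu}\to \mathrm{St}'^{\geq i}_{\lambda,\mu}\to \mathrm{St}'^{>i}_{\lambda,\mu}.
\]

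Finally I assemble the filtration by induction along the finite total order. When $i=\min I$ we have $\calSt^{\geq i}_{\lambda,\mu}=\calSt_{\lambda,\mu}$, so $\mathrm{St}^{\geq \min I}_{\lambda,\mu}=\mathrm{St}_{\lambda,\mu}$. Above the maximum the open substack is empty, so the corresponding functor is $0$. The unprimed triangles give a chain of maps $0\to\cdots\to\mathrm{St}^{>i}\to\mathrm{St}^{\geq i}\to\cdots\to\mathrm{St}_{\lambda,\mu}$, which is increasing as $i$ decreases, hence indexed by $(I,\geq)$, with graded pieces $\mathrm{St}^i$. The primed triangles give the dual chain $\mathrm{St}'\to\cdots\to 0$ of successive quotients, indexed by $(I,\leq)$, with graded pieces $\mathrm{St}'^i$.

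The only delicate step is the adjunction argument producing the $(!,*)$-triangle. I will handle it by noting that $j_!\dashv j^*\simeq j^!\dashv j_*$ and $i^*\dashv i_*\simeq i_!\dashv i^!$, so the left adjoint of each term is the corresponding term, and taking left adjoints preserves cofibre sequences of exact functors between stable categories. Since the assumption only asserts that isomorphisms and triangles exist, with no coherence required, these triangles suffice to define the filtration.
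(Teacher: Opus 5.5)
Your proposal is correct and follows the paper's argument. You write $\mathrm{St}_{\lambda,\mu}$ via base change on the Steinberg stack, take the localization triangle for the closed stratum and its open complement in $\calSt^{\geq i}_{\lambda,\mu}$ together with its left adjoint, and conjugate by the pullbacks and pushforwards along $\phi\circ j_{\geq i}$ to obtain the triangles of partial Steinberg functors. The paper states these same triangles and simply concludes that they prove the proposition; your only addition is spelling out how they assemble into the filtration along the finite total order.
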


\noindent
We now study the functors
$\mathrm{St}_{\lambda, \mu}^i$ and~$\mathrm{St}_{\smash{\lambda, \mu}}'^i$.
Consider the commutative diagram
\begin{equation}
    \label{eq-affine-bundle-diagram}
    \begin{tikzcd}[column sep={5em, between origins}, row sep={3em, between origins}]
        &&
        \calSt_{\lambda, \mu}^i
        \ar[dl, "r^i_\lambda"]
        \ar[dr, "r^i_\mu"']
        \ar[ddll, bend right=20, shift right, "\phi^i_{\lambda, \mu}"']
        \ar[ddrr, bend left=20, shift left, "\phi^i_{\mu, \lambda}"]
        &&
        \\
        &
        (\mathcal{X}_\lambda)_{\mu_i}^+
        \ar[dl, "p^i_\mu"]
        \ar[dr, "q^i_\mu"']
        &&
        (\mathcal{X}_\mu)_{\lambda_i}^+
        \ar[dl, "q^i_\lambda"]
        \ar[dr, "p^i_\lambda"']
        \\
        \mathcal{X}_\lambda
        && \mathcal{X}_{\lambda, \mu}^i
        && \mathcal{X}_\mu \rlap{ ,}
    \end{tikzcd}
\end{equation}
where we define
$\mathcal{X}_{\lambda, \mu}^i =
(\mathcal{X}_\lambda)_{\mu_i} \simeq (\mathcal{X}_\mu)_{\lambda_i}$,
and we study the induced morphism
\begin{equation*}
    h_{\lambda, \mu}^i \colon
    \mathrm{St}_{\lambda, \mu}^i \longrightarrow
    (\mathcal{X}_\lambda)_{\mu_i}^+
    \underset{\mathcal{X}_{\lambda, \mu}^i}{\times}
    (\mathcal{X}_\mu)_{\lambda_i}^+ \ .
\end{equation*}

\begin{lemma}
    \label{lem-affine-bundle}
    Let~$\mathcal{X}$ be a stack satisfying \cref{assumption-stratification},
    and assume that~$\mathcal{X}$ is smooth over~$S$.
    Consider the morphism
    \begin{equation}
        \label{eq-affine-bundle-lemma}
        h \colon
        \calFilt^2 (\mathcal{X})
        \longrightarrow
        \calGrad (\calFilt (\mathcal{X}))
        \underset{\calGrad^2 (\mathcal{X})}{\times}
        \calFilt (\calGrad (\mathcal{X}))
    \end{equation}
    induced by the inclusions
    $\mathrm{B} \mathbb{G}_\mathrm{m} \times \mathbb{A}^1 / \mathbb{G}_\mathrm{m} \to
    \mathbb{A}^2 / \mathbb{G}_\mathrm{m}^2
    \gets \mathbb{A}^1 / \mathbb{G}_\mathrm{m} \times \mathrm{B} \mathbb{G}_\mathrm{m}$
    as the coordinate axes.

    Then~$h$ is smooth, and it is cohomologically contractible,
    meaning that the adjunction unit
    $\mathrm{id} \to h_* \, h^*$
    is an isomorphism.
    In particular,
    via \cref{assumption-six-functors}~\cref{item-assumption-smooth-pullback},
    we have an isomorphism 
    \begin{equation}
        h_* \, h^! \simeq \langle \dim h \rangle \ .
    \end{equation}
\end{lemma}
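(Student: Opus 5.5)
Our plan is to reduce to a local computation, as in the proof of \cref{theorem:stratification}, and identify $h$ there with an affine-space-like bundle, namely a vector bundle stack quotient. Smoothness and cohomological contractibility can both be checked étale locally over the good or adequate moduli space: smoothness is étale local, and the unit $\mathrm{id} \to h_* h^*$ being an isomorphism can be checked after étale base change by smooth descent (\cref{assumption-six-functors}~\cref{item-assumption-smooth-descent}) and smooth base change. The formation of $\calFilt^2$, $\calGrad \circ \calFilt$, $\calFilt\circ\calGrad$ and $\calGrad^2$ commutes with such base changes, because the relevant morphisms $\calMap(\Theta\text{-type stack},\mathcal{X}') \to \calMap(-,\mathcal{X})\times_{\mathcal{X}}\mathcal{X}'$ are isomorphisms for étale $\mathcal{X}'\to\mathcal{X}$ that are pulled back from moduli spaces; this is \cite[\S 1]{halpern-leistner-instability}. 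So we may assume $\mathcal{X} = U/\mathrm{GL}_n$ with $U=\operatorname{Spec} A$ affine and smooth over $S$, and fix cocharacters $\lambda,\mu$ into a maximal torus $T$ commuting, as in that proof. All four stacks then decompose into components indexed by pairs $(\lambda, w\mu)$ of commuting cocharacters modulo $W$.

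Next we compute each side for a fixed commuting pair $(\lambda,\mu)$, using the $\mathbb{Z}^2$-grading $A=\bigoplus A_{a,b}$. By \cite[Theorem~1.4.7]{halpern-leistner-instability} applied twice, the source component is $U^{+,+}/P_{\lambda,\mu}$. Here $U^{+,+}=\operatorname{Spec} A/(A_{a,b}: a<0 \text{ or } b<0)$, and $P_{\lambda,\mu}=P_\lambda\cap P_\mu$ is the subgroup of $\mathrm{GL}_n$ with weights $a\ge0,b\ge0$. The target is $(U^{0,+}\times_{U^{0,0}} U^{+,0})/(P_{0,+}\times_{L_{\lambda,\mu}} P_{+,0})$, where $U^{0,+}$ is cut out by $a\neq0$ or $b<0$, and similarly for the others. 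The group map $P_{\lambda,\mu}\to P_{0,+}\times_{L}P_{+,0}$ is surjective with kernel the unipotent group $N_{>0,>0}$, whose Lie algebra has weights $a>0,b>0$. The space map $U^{+,+}\to U^{0,+}\times_{U^{0,0}}U^{+,0}$ is $\mathbb{G}_m^2$-equivariant. Since $U$ is smooth, $U^{+,+}$ is smooth. The contracting action of $(\lambda,\mu)$ in the positive quadrant is used to show that this map is an affine bundle, with fibres modelled on the part of the tangent space of weights $a>0,b>0$. This is the Białynicki-Birula type statement for the $\mathbb{N}^2$-graded attractor, proved via the description of attracting loci in \cite[\S 1.4.7]{drinfeld-gaitsgory-2014-braden} applied to the diagonal cocharacter $\lambda+\mu$ restricted to the fibres. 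The upshot is that $h$ is, locally, the composite of an affine bundle and a $\mathrm{B}N_{>0,>0}$-gerbe with unipotent band. Both are smooth, so $h$ is smooth.

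For contractibility, both the affine bundle and $\mathrm{B}$ of a split unipotent group are $\mathbb{A}^1$-deformation retracts, or at least iterated extensions of $\mathbb{G}_a$ and vector bundles. Then \cref{lem-a1-contract} together with \cref{assumption-six-functors}~\cref{item-assumption-a1-invariance} gives $\mathrm{id}\simeq h_*h^*$. Since $h$ is smooth of relative dimension $\dim h$, \cref{item-assumption-smooth-pullback} then gives $h_*h^!\simeq h_*h^*\langle\dim h\rangle\simeq\langle\dim h\rangle$. A cleaner global alternative is to observe that the $\mathbb{G}_m$-action by the diagonal scaling on $\mathbb{A}^2/\mathbb{G}_m^2$ contracts it onto the coordinate axes' union. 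This should exhibit $h$ directly as an $\mathbb{A}^1$-deformation retract with section given by composing with the projection. I would try this first, and fall back to the local computation for smoothness.

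The main obstacle is the affine-bundle claim in the local model: showing that $U^{+,+}\to U^{0,+}\times_{U^{0,0}}U^{+,0}$ is smooth and surjective with contractible fibres when $U$ is merely smooth over a general base $S$. The target fibre product need not obviously be smooth. I would handle this with the local structure result \cref{lem-local-str} and the Luna-type linearization near fixed points, reducing to $U$ a representation, where everything is linear.
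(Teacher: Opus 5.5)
Your skeleton matches the paper's: reduce étale locally over the moduli space to $U/\mathrm{GL}_n$ with $U$ smooth affine, remove the unipotent kernel $N_\lambda \cap N_\mu$ by a smooth base change, and reduce to showing that $\pi\colon U_\lambda^+ \cap U_\mu^+ \to (U_\lambda \cap U_\mu^+) \times_{U_\lambda \cap U_\mu} (U_\lambda^+ \cap U_\mu)$ is an affine-space fibration. The gap is the step you flag as the main obstacle, which is the real content of the lemma, and your proposal does not supply it. What is needed is a linearization equivariant for the \emph{full} torus $\mathbb{G}_\mathrm{m}^2$. Write $U_\lambda^+ \cap U_\mu^+ = \operatorname{Spec} A$ with $A$ smooth and $\mathbb{N}^2$-graded. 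Then there should be a non-canonical $\mathbb{G}_\mathrm{m}^2$-equivariant isomorphism $U_\lambda^+ \cap U_\mu^+ \simeq V$, where $V = \bigoplus_{a,b \geq 0} V_{a,b}$ is an $\mathbb{N}^2$-graded vector bundle over $U_\lambda \cap U_\mu$ with $V_{0,0} = 0$. The paper takes this from the multigraded Białynicki-Birula theorem \cite[Theorem~12.4]{mayeux-2026-magnetism} with monoid $\mathbb{N}^2$.

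With this linearization, the target of $\pi$ is the vector bundle $\bigoplus_{a>0} V_{a,0} \oplus \bigoplus_{b>0} V_{0,b}$, so your worry about its smoothness disappears. The map $\pi$ is then the linear projection with fibre $\bigoplus_{a,b>0} V_{a,b}$.

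Your suggested justification does not give this. First, \cite[\S 1.4.7]{drinfeld-gaitsgory-2014-braden} only describes attractors by equations, not as bundles. Second, $\pi$ is $\mathbb{G}_\mathrm{m}^2$-equivariant, so $(\lambda+\mu)(t)$ carries the fibre over $(x,y)$ to the fibre over $(\mu(t)x, \lambda(t)y)$. Fibres over points off the zero section are therefore not preserved, and there is no fibrewise contraction to apply Białynicki-Birula to. Third, a linearization for the single diagonal cocharacter would not be compatible with the two projections. The diagonal grading does have a role: one way to prove the multigraded statement is to lift a bigraded splitting of $A_+/A_+^2$ and use the total grading for graded Nakayama.

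Your fallbacks do not close the gap either. Linearizing $U$ to a representation via \cref{lem-local-str} and a Luna-type slice theorem is at best available in case~\cref{item-case-gms} of \cref{assumption-stratification}. Even there, \cref{lem-local-str} only gives $V/G$ with $V$ affine, not a representation, so a relative slice theorem over $S$ would be a further input. In case~\cref{item-case-glob-quot} the stabilizers are only geometrically reductive, for example for semistable bundles in positive characteristic, and no étale linearization exists. The multigraded Białynicki-Birula theorem needs only the smoothness of $U_\lambda^+ \cap U_\mu^+$, which is why the paper's argument covers both cases.

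The ``global alternative'' fails. The scaling homotopy on $\mathbb{A}^2/\mathbb{G}_\mathrm{m}^2$ contracts it to the origin, not onto the union of the axes. So it retracts $\calFilt^2(\mathcal{X})$ onto $\calGrad^2(\mathcal{X})$, and only shows that $h^*$ is fully faithful on objects pulled back from $\calGrad^2(\mathcal{X})$. There is not even a $\mathbb{G}_\mathrm{m}^2$-equivariant retraction of $\mathbb{A}^2$ onto $\{xy=0\}$: its components would be homogeneous of bidegrees $(1,0)$ and $(0,1)$, so it would be the identity. Hence no section of $h$ comes from the geometry of $\mathbb{A}^2/\mathbb{G}_\mathrm{m}^2$; one only appears after choosing the linearization above.

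The rest of your argument is fine: the unipotent gerbe and the passage from contractibility to $h_* h^! \simeq \langle \dim h \rangle$.
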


\begin{proof}
    By smooth descent,
    being cohomologically contractible
    can be checked smooth locally,
    which we will do.

    As in the proof of \cref{theorem:stratification},
    étale locally on the good or adequate moduli space,
    we may assume that
    $\mathcal{X} \simeq U / \mathrm{GL}_n$
    for a smooth affine scheme~$U$
    and $n \geq 0$.
    Choosing a connected component on both sides of
    \cref{eq-affine-bundle-lemma}
    corresponding to a face
    $\rho \colon \mathbb{Z}^2 \to \mathrm{CL} (U / \mathrm{GL}_n)$,
    we have induced cocharacters
    $(\lambda, \mu) \colon \mathbb{G}_\mathrm{m}^2 \to T$
    of the maximal torus $T \subset \mathrm{GL}_n$.
    The morphism \cref{eq-affine-bundle-lemma} becomes
    \begin{equation*}
        \frac{U_\lambda^+ \cap U_\mu^+}{P_\lambda \cap P_\mu}
        \longrightarrow
        \frac{U_\lambda \cap U_\mu^+}{L_\lambda \cap P_\mu}
        \underset{\frac{U_\lambda \cap U_\mu}{L_\lambda \cap L_\mu}{}}{\times}
        \frac{U_\lambda^+ \cap U_\mu}{P_\lambda \cap L_\mu} \ .
    \end{equation*}
    Upon a smooth base change, this morphism becomes
    \begin{equation*}
        \frac{U_\lambda^+ \cap U_\mu^+}{N_\lambda \cap N_\mu}
        \longrightarrow
        (U_\lambda \cap U_\mu^+)
        \underset{U_\lambda \cap U_\mu}{\times}
        (U_\lambda^+ \cap U_\mu) \ ,
    \end{equation*}
    where $N_\lambda \subset P_\lambda$ and $N_\mu \subset P_\mu$
    are the unipotent radicals,
    and $N_\lambda \cap N_\mu$ is an affine space over~$S$,
    and is cohomologically contractible by
    \cref{assumption-six-functors}~\cref{item-assumption-a1-invariance}.
    Thus $S \to \mathrm{B} (N_\lambda \cap N_\mu)$
    is cohomologically contractible, hence also
    $U_\lambda^+ \cap U_\mu^+ \to (U_\lambda^+ \cap U_\mu^+) / (N_\lambda \cap N_\mu)$,
    and it is enough to show that the morphism
    \begin{equation}
        \label{eq-affine-bundle-lemma-2}
        U_\lambda^+ \cap U_\mu^+
        \longrightarrow
        (U_\lambda \cap U_\mu^+)
        \underset{U_\lambda \cap U_\mu}{\times}
        (U_\lambda^+ \cap U_\mu)
    \end{equation}
    is smooth and cohomologically contractible.

    In fact, we will show that \cref{eq-affine-bundle-lemma-2}
    is an affine space fibration.
    As in the proof of
    \cref{theorem:stratification},
    we may write
    $U_\lambda^+ \cap U_\mu^+ \simeq \operatorname{Spec} A$
    for a ring~$A$ with an $\mathbb{N}^2$-grading
    $A = \bigoplus_{a, b \geq 0} A_{a, b}$.
    We may now apply a Białynicki-Birula decomposition
    for multiple gradings,
    as in \textcite[Theorem~12.4]{mayeux-2026-magnetism}
    and its proof with the monoid $N = \mathbb{N}^2$,
    to conclude that there is a (non-canonical)
    $\mathbb{G}_\mathrm{m}^2$-equivariant isomorphism
    \begin{equation*}
        U_\lambda^+ \cap U_\mu^+ \simeq V
    \end{equation*}
    for an $\mathbb{N}^2$-graded vector bundle
    $V = \bigoplus_{a, b \geq 0} V_{a, b}
    \to U_\lambda \cap U_\mu$
    with $V_{0, 0} = 0$,
    and $\mathbb{G}_\mathrm{m}^2$ acts on its fibres with the corresponding weights.
    In particular,
    \cref{eq-affine-bundle-lemma-2} becomes the projection
    \begin{equation*}
        V \longrightarrow
        \bigoplus_{a > 0} V_{a, 0} \oplus
        \bigoplus_{b > 0} V_{0, b} \ ,
    \end{equation*}
    and is a vector bundle with fibre
    $\bigoplus_{a, b > 0} V_{a, b}$.
\end{proof}

\begin{corollary}
\label{corollary:twisted-base-change}
    Let~$\mathcal{X}$ be a smooth stack over~$S$
    satisfying \cref{assumption-stratification}.
    Then in the diagram \cref{eq-affine-bundle-diagram},
    we have twisted base change formula
    \begin{equation*}
        (r_\mu^i)_* \, (r_\lambda^i)^!
        \simeq
        (q_\lambda^i)^! \, (q_\mu^i)_* \,
        \langle \dim h_{\lambda, \mu}^i \rangle \ ,
    \end{equation*}
    where $\dim h_{\lambda, \mu}^i =
    \dim r_\lambda^i - \dim q_\lambda^i =
    \dim r_\mu^i - \dim q_\mu^i$.
\end{corollary}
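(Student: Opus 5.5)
The plan is to factor the diagram \cref{eq-affine-bundle-diagram} through the fibre product
\[
    \mathcal{F}^i = (\mathcal{X}_\lambda)_{\mu_i}^+ \underset{\mathcal{X}_{\lambda, \mu}^i}{\times} (\mathcal{X}_\mu)_{\lambda_i}^+ \ ,
\]
with projections $\pi_\lambda \colon \mathcal{F}^i \to (\mathcal{X}_\lambda)_{\mu_i}^+$ and $\pi_\mu \colon \mathcal{F}^i \to (\mathcal{X}_\mu)_{\lambda_i}^+$. Then $r_\lambda^i = \pi_\lambda \circ h_{\lambda, \mu}^i$ and $r_\mu^i = \pi_\mu \circ h_{\lambda, \mu}^i$. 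First identify $h_{\lambda, \mu}^i$ with the restriction of the morphism $h$ of \cref{lem-affine-bundle} to the connected component of $\calFilt^2 (\mathcal{X})$ indexed by the relative position~$i$. Under $\calSt^i_{\lambda, \mu} \simeq (\mathcal{X}_\lambda^+)_{\mu_i}^+$, the maps to $(\mathcal{X}_\lambda)_{\mu_i}^+ \subset \calFilt (\calGrad (\mathcal{X}))$ and to $(\mathcal{X}_\mu)_{\lambda_i}^+ \subset \calGrad (\calFilt (\mathcal{X}))$ are exactly the restrictions to the two coordinate axes $\mathrm{B}\mathbb{G}_\mathrm{m} \times \mathbb{A}^1/\mathbb{G}_\mathrm{m}$ and $\mathbb{A}^1/\mathbb{G}_\mathrm{m} \times \mathrm{B}\mathbb{G}_\mathrm{m}$ (up to swapping the two factors). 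Their common image in $\calGrad^2 (\mathcal{X})$ is $\mathcal{X}^i_{\lambda,\mu}$. So \cref{lem-affine-bundle} gives that $h = h^i_{\lambda,\mu}$ is smooth with $h_* \, h^! \simeq \langle \dim h \rangle$.

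Next, compute:
\[
    (r_\mu^i)_* \, (r_\lambda^i)^! \simeq (\pi_\mu)_* \, h_* \, h^! \, \pi_\lambda^! \simeq (\pi_\mu)_* \, \pi_\lambda^! \, \langle \dim h \rangle \ .
\]
It remains to show the ordinary base change $(\pi_\mu)_* \, \pi_\lambda^! \simeq (q_\lambda^i)^! \, (q_\mu^i)_*$ for the cartesian square defining $\mathcal{F}^i$. This is the $*$-push/$!$-pull base change of the six-functor formalism, valid for any cartesian square, obtained by passing to right adjoints in $f^* g_! \simeq g'_! f'^*$. Also, $q_\lambda^i$ and $q_\mu^i$ are smooth, since $\mathcal{X}_\lambda, \mathcal{X}_\mu$ are smooth and $\Theta$-reductive. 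Hence $\pi_\lambda$ and $\pi_\mu$ are smooth, and the dimension identity follows:
\[
    \dim r^i_\lambda = \dim h + \dim \pi_\lambda = \dim h + \dim q^i_\lambda \ ,
\]
and symmetrically for~$\mu$.

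The main obstacle is the first step: checking carefully that the component of the target of \cref{eq-affine-bundle-lemma} over the face of~$i$ is exactly $\mathcal{F}^i$. This requires matching the identifications $(\mathcal{X}_\lambda)_{\mu_i} \simeq (\mathcal{X}_\mu)_{\lambda_i}$ and the swap $\calGrad \calFilt$ versus $\calFilt \calGrad$ with the relative position bijection of \cref{def-relative-position}. I would handle this by working on connected components indexed by faces $\rho \colon \mathbb{Z}^2 \to \mathrm{CL}(\mathcal{X})$ with $\rho(1,0) = \lambda$ and $\rho(0,1) = \mu$, using that mapping stacks commute with fibre products, so that the component of $\calGrad(\calFilt(\mathcal{X})) \times_{\calGrad^2(\mathcal{X})} \calFilt(\calGrad(\mathcal{X}))$ over~$\rho$ is the stated fibre product.
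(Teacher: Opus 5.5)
Your proposal is correct and follows the paper's proof essentially verbatim: you factor $r^i_\lambda$ and $r^i_\mu$ through the fibre product via $h^i_{\lambda,\mu}$, use \cref{lem-affine-bundle} to replace $(h^i_{\lambda,\mu})_* \, (h^i_{\lambda,\mu})^!$ by $\langle \dim h^i_{\lambda,\mu} \rangle$, and finish with the $*$-push/$!$-pull base change for the cartesian square. The only difference is that you spell out points the paper leaves implicit, namely that $h^i_{\lambda,\mu}$ is the restriction of the morphism in \cref{lem-affine-bundle} to the component indexed by~$i$, and the dimension count obtained from smoothness of the base-changed projections.
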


\begin{proof}
    Consider the cartesian square
    \begin{equation*}
        \begin{tikzcd}[row sep=1.2em, column sep=1.6em]
            \calSt_{\lambda, \mu}^i
            \ar[dr, "\smash{h_{\lambda, \mu}^i}"]
            \ar[ddr, bend right=10, shift right, "r_\lambda^i"']
            \ar[drr, bend left=10, shift left, "r_\mu^i"]
            &[-.8em]
            \\[-.8em]
            &
            (\mathcal{X}_\lambda)_{\mu_i}^+
            \underset{\mathcal{X}_{\lambda, \mu}^i}{\times}
            (\mathcal{X}_\mu)_{\lambda_i}^+
            \ar[r, "b"']
            \ar[d, "a"]
            \ar[dr, phantom, start anchor=center, end anchor=center, pos=.4, "\ulcorner"]
            &
            (\mathcal{X}_\mu)_{\lambda_i}^+
            \ar[d, "q_\lambda^i"]
            \\
            &
            (\mathcal{X}_\lambda)_{\mu_i}^+
            \ar[r, "q_\mu^i"]
            &
            \mathcal{X}_{\lambda, \mu}^i \rlap{ .}
        \end{tikzcd}
    \end{equation*}
    We have
    \begin{align*}
        (r_\mu^i)_* \, (r_\lambda^i)^!
        & \simeq
        b_* \, (h_{\lambda, \mu}^i)_* \, (h_{\lambda, \mu}^i)^! \, a^!
        \\
        & \simeq
        b_* \, a^! \langle \dim h_{\lambda, \mu}^i \rangle
        \\
        & \simeq
        (q_\lambda^i)^! \, (q_\mu^i)_* \langle \dim h_{\lambda, \mu}^i \rangle \ ,
    \end{align*}
    where the second step uses \cref{lem-affine-bundle}.
    The expression for
    $\dim h_{\lambda, \mu}^i$
    can be seen from the diagram above.
\end{proof}

\noindent
For~$\mathcal{X}$ as above, we now define the braiding operators
\begin{alignat}{2}
\label{equation:twist}
    T^i_{\lambda, \mu} = \langle -{\dim h_{\lambda, \mu}^i} \rangle \ ,
    \quad
    T^{i \prime}_{\lambda, \mu} = \langle \dim h_{\lambda, \mu}^i \rangle \colon
    \quad
    \mathcal{D} ((\mathcal{X}_{\lambda})_{\mu_i})
    \longsimto
    \mathcal{D} ((\mathcal{X}_{\mu})_{\lambda_i}) \ ,
\end{alignat}
using the canonical identifications
$(\mathcal{X}_{\lambda})_{\mu_i} \simeq \mathcal{X}_{\lambda, \mu}^i
\simeq (\mathcal{X}_{\mu})_{\lambda_i}$.

The integer $\dim h_{\lambda, \mu}^i$ can be alternatively described as follows.
Consider the forgetful morphism
$u_i \colon \mathcal{X}_{\lambda, \mu}^i \to \mathcal{X}$,
and consider the perfect complex
$E^i = u_i^* \, \mathbb{T}_{\mathcal{X}}$
on~$\mathcal{X}_{\lambda, \mu}^i$,
which admits a natural $\mathbb{Z}^2$-grading
by the tautological $\mathrm{B} \mathbb{G}_\mathrm{m}^2$-action
on~$\mathcal{X}_{\lambda, \mu}^i$
via~$\lambda$ and~$\mu_i$.
(Using $\mu$ and~$\lambda_i$ instead will give the same grading,
but with the two factors in~$\mathbb{Z}^2$ swapped.)
We then have
\begin{equation}
    \label{eq-dim-h}
    \dim h_{\lambda, \mu}^i =
    \operatorname{rank} E^i_{> 0, \, > 0} \ .
\end{equation}

\begin{proposition}
    \label{proposition:explicitation_partial_steinberg}
    There exist equivalences of functors
        \begin{align*}
            \mathrm{St}_{\lambda, \mu}^i
            & \simeq
            \mathrm{Ind}_{\lambda_i} \circ T^i_{\lambda, \mu} \circ \mathrm{Res}_{\mu_i} \ ,
            \\
            \mathrm{St}_{\lambda, \mu}'^i
            & \simeq
            \mathrm{Ind}'_{\lambda_i} \circ T^{i \prime}_{\lambda, \mu} \circ \mathrm{Res}'_{\mu_i} \ .
        \end{align*}
\end{proposition}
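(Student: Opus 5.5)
I will prove the primed statement first, since it uses only $*$-pushforwards and $!$-pullbacks, which are exactly the functors appearing in \cref{corollary:twisted-base-change}. The unprimed statement will then follow by passing to left adjoints.

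For the primed statement, I use the diagram \cref{eq-affine-bundle-diagram}, which factors $\phi^i_{\lambda,\mu} = p^i_\mu \circ r^i_\lambda$ and $\phi^i_{\mu,\lambda} = p^i_\lambda \circ r^i_\mu$. Hence
\begin{equation*}
    \mathrm{St}'^i_{\lambda,\mu}
    = (p^i_\lambda)_* \, (r^i_\mu)_* \, (r^i_\lambda)^! \, (p^i_\mu)^! \ .
\end{equation*}
\Cref{corollary:twisted-base-change} replaces the middle composite $(r^i_\mu)_* (r^i_\lambda)^!$ by $(q^i_\lambda)^! (q^i_\mu)_* \langle \dim h^i_{\lambda,\mu} \rangle$. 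This gives
\begin{equation*}
    \mathrm{St}'^i_{\lambda,\mu}
    \simeq \bigl( (p^i_\lambda)_* (q^i_\lambda)^! \bigr) \circ \langle \dim h^i_{\lambda,\mu}\rangle \circ \bigl( (q^i_\mu)_* (p^i_\mu)^! \bigr)
    = \mathrm{Ind}'_{\lambda_i} \circ T^{i\prime}_{\lambda,\mu} \circ \mathrm{Res}'_{\mu_i} \ .
\end{equation*}
Here I identify $(p^i_\lambda,q^i_\lambda)$ with the induction correspondence for $\lambda_i \in \mathrm{CL}(\mathcal{X}_\mu)$, and $(p^i_\mu,q^i_\mu)$ with the restriction correspondence for $\mu_i \in \mathrm{CL}(\mathcal{X}_\lambda)$. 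The Tate twist commutes with everything, so its placement does not matter. One point to check is that $\mathcal{X}_\lambda$ and $\mathcal{X}_\mu$ are again smooth and $\Theta$-reductive, and satisfy \cref{assumption-stratification} (e.g.\ since they are closed substacks of stacks with good/adequate moduli spaces, hence admit such moduli spaces, cf.\ \cite{halpern-leistner-instability}). This is not needed for the identity itself, since the corollary was already stated for $\mathcal{X}$.

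For the unprimed statement, I take left adjoints. First, $\mathrm{St}^i_{\lambda,\mu} = (\phi^i_{\mu,\lambda})_! (\phi^i_{\lambda,\mu})^*$ is left adjoint to $(\phi^i_{\lambda,\mu})_* (\phi^i_{\mu,\lambda})^! = \mathrm{St}'^i_{\mu,\lambda}$. The stratum $\calSt^i_{\lambda,\mu}$ equals the corresponding stratum of $\calSt_{\mu,\lambda}$ under the swap, with the roles of $\lambda_i$ and $\mu_i$ exchanged. Note also that $\dim h^i$ is symmetric by \cref{eq-dim-h}, since swapping the factors preserves the rank of $E^i_{>0,>0}$. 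Applying the primed formula with $\lambda$ and $\mu$ swapped then gives
\begin{equation*}
    \mathrm{St}'^i_{\mu,\lambda} \simeq \mathrm{Ind}'_{\mu_i} \circ \langle \dim h^i \rangle \circ \mathrm{Res}'_{\lambda_i} \ .
\end{equation*}
Its left adjoint is
\begin{equation*}
    \mathrm{Ind}_{\lambda_i} \circ \langle -\dim h^i\rangle \circ \mathrm{Res}_{\mu_i} \ ,
\end{equation*}
using $\mathrm{Ind} \dashv \mathrm{Res}'$, $\mathrm{Res} \dashv \mathrm{Ind}'$, and the fact that $\langle n\rangle \dashv \langle -n\rangle$. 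By uniqueness of adjoints this is $\mathrm{St}^i_{\lambda,\mu}$.

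The main obstacle is bookkeeping rather than substance. I need to confirm that the corollary's diagram for the swapped pair matches, and in particular that the identification $(\mathcal{X}_\lambda^+)_{\mu_i}^+ \simeq (\mathcal{X}_\mu^+)_{\lambda_i}^+$ is compatible with the maps $r$, $p$ and $q$. I also need to verify that the symmetry of $\dim h^i$ holds, either via \cref{eq-dim-h} or directly via the formula $\dim r^i_\lambda - \dim q^i_\lambda = \dim r^i_\mu - \dim q^i_\mu$ in \cref{corollary:twisted-base-change}.
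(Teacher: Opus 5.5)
Your proposal is correct and matches the paper's proof: you factor $\phi^i$ through \cref{eq-affine-bundle-diagram}, apply \cref{corollary:twisted-base-change} to obtain the primed formula, and deduce the unprimed one by taking left adjoints after swapping $\lambda$ and $\mu$. One aside is wrong but harmless. Your parenthetical claim that $\mathcal{X}_\lambda$ is a closed substack of $\mathcal{X}$ is false in general; for example, $\mathrm{B} L_\lambda \to \mathrm{B} G$ is not a closed immersion. As you say yourself, though, that remark plays no role in the argument.
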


\begin{proof}
    By adjunction (and swapping the roles of $\lambda$ and~$\mu$),
    it is enough to show the second formula. We have
    \begin{align*}
        \mathrm{St}_{\lambda, \mu}'^i
        & \simeq
        (\phi_{\mu, \lambda}^i)_* \, \phi_{\lambda, \mu}^!
        \\
        & \simeq
        (p_\lambda^i)_* \, (r_\mu^i)_* \, (r_\lambda^i)^! \, (p_\mu^i)^!
        \\
        & \simeq
        (p_\lambda^i)_* \, (q_\lambda^i)^! \, (q_\mu^i)_* \, (p_\mu^i)^! \, \langle \dim h_{\lambda, \mu}^i \rangle
        \\
        & \simeq
        \mathrm{Ind}'_{\lambda_i} \circ T^{i \prime}_{\lambda, \mu} \circ \mathrm{Res}'_{\mu_i} \ ,
    \end{align*}
    where the third step used
    \cref{corollary:twisted-base-change}.
\end{proof}

\begin{theorem}[Mackey formula]
    \label{theorem:mackey-formula}
    Let~$\mathcal{X}$ be a smooth stack over~$S$
    satisfying \cref{assumption-stratification},
    and let $\lambda, \mu \in \mathrm{CL} (\mathcal{X})$.

    Then the compositions
    $\mathrm{Res}_{\mu} \circ \mathrm{Ind}_{\lambda}$
    and
    $\mathrm{Res}'_{\mu} \circ \mathrm{Ind}'_{\lambda}$
    have filtrations with associated graded pieces
    \begin{align*}
        \mathrm{gr}_i (\mathrm{Res}_{\mu} \circ \mathrm{Ind}_{\lambda})
        & \simeq
        \mathrm{Ind}_{\lambda_i} \circ T^i_{\lambda, \mu} \circ \mathrm{Res}_{\mu_i} \ ,
        \\
        \mathrm{gr}_i (\mathrm{Res}'_{\mu} \circ \mathrm{Ind}'_{\lambda})
        & \simeq
        \mathrm{Ind}'_{\lambda_i} \circ T^{i \prime}_{\lambda, \mu} \circ \mathrm{Res}'_{\mu_i} \ .
    \end{align*}
    The two filtrations are indexed by totally ordered sets
    $(I, \geq)$ and $(I, \leq)$, respectively,
    where~$I$ is the set of relative positions of~$\lambda$ and~$\mu$,
    with the total order~$\leq$ given by \cref{theorem:stratification}.

    In particular, for any morphisms
    $\beta \overset{\sigma}{\gets} \alpha \overset{\tau}{\to} \gamma$
    in the category $\mathsf{Hall} (\mathcal{X})$,
    the compositions
    $\mathrm{Res}_{\tau} \circ \mathrm{Ind}_{\sigma}$
    and
    $\mathrm{Res}'_{\tau} \circ \mathrm{Ind}'_{\sigma}$
    have filtrations with associated graded pieces
    \begin{align*}
        \mathrm{gr}_i (\mathrm{Res}_{\tau} \circ \mathrm{Ind}_{\sigma})
        & \simeq
        \mathrm{Ind}_{\sigma_i} \circ T^i_{\sigma, \tau} \circ \mathrm{Res}_{\tau_i} \ ,
        \\
        \mathrm{gr}_i (\mathrm{Res}'_{\tau} \circ \mathrm{Ind}'_{\sigma})
        & \simeq
        \mathrm{Ind}'_{\sigma_i} \circ T^{i \prime}_{\sigma, \tau} \circ \mathrm{Res}'_{\tau_i} \ ,
    \end{align*}
    indexed by $(I, \geq)$ and $(I, \leq)$, respectively,
    for some totally ordered set~$I$ and suitable morphisms
    $\beta \overset{\tau_i}{\to} \rho_i \overset{\sigma_i}{\gets} \gamma$
    in $\mathsf{Hall} (\mathcal{X})$ for $i \in I$.

    If, moreover,
    the finite subcategories $\mathcal{D}_\mathrm{c} (\mathcal{X})$ in
    \cref{assumption-six-functors}~\cref{item-assumption-finite-subcategories}
    are defined, then the above filtrations restrict directly to these finite subcategories.
\end{theorem}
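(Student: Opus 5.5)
The plan is to assemble the theorem from results already established in this section. First, I would identify $\mathrm{Res}_\mu \circ \mathrm{Ind}_\lambda$ with the Steinberg functor $\mathrm{St}_{\lambda,\mu} \simeq (\phi_{\mu,\lambda})_! \, \phi_{\lambda,\mu}^*$, and similarly for the primed version. This uses the base change formula for the cartesian square defining $\calSt_{\lambda,\mu}$.

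Second, since $\mathcal{X}$ satisfies \cref{assumption-stratification}, I would invoke \cref{theorem:stratification}. It stratifies $\calSt_{\lambda,\mu}$ by the strata $(\mathcal{X}_\lambda^+)_{\mu_i}^+$, indexed by the finite totally ordered set $I$ of relative positions; finiteness follows from \cref{def-relative-position}. Then \cref{prop-st-filtrations} yields the filtrations indexed by $(I,\geq)$ and $(I,\leq)$, with graded pieces $\mathrm{St}^i_{\lambda,\mu}$ and $\mathrm{St}'^i_{\lambda,\mu}$. These come from iterating the localization triangles of \cref{assumption-six-functors}~\cref{item-assumption-localization} along the open substacks $\calSt^{\geq i}_{\lambda,\mu}$.

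Third, I would identify the graded pieces using \cref{proposition:explicitation_partial_steinberg}. That result rests on smoothness of $\mathcal{X}$ through \cref{lem-affine-bundle} and \cref{corollary:twisted-base-change}, and gives $\mathrm{St}^i_{\lambda,\mu} \simeq \mathrm{Ind}_{\lambda_i} \circ T^i_{\lambda,\mu} \circ \mathrm{Res}_{\mu_i}$ and its primed analogue. This step is where the real content lies. The twisted base change requires $h^i_{\lambda,\mu}$ to be a cohomologically contractible smooth map, and that in turn needs the multigraded Białynicki-Birula description. The unprimed formula comes from the primed one by passing to left adjoints, using $\mathrm{St}_{\lambda,\mu} \dashv \mathrm{St}'_{\mu,\lambda}$ and swapping $\lambda$ and $\mu$. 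I would check that the twist $T^i$ is the inverse of $T^{i\prime}$, which is immediate from \cref{equation:twist}.

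For the Hall category version, I would pass to $\mathcal{X}_\alpha$, which is again smooth and satisfies \cref{assumption-stratification}. Indeed, $\mathcal{X}_\alpha \simeq \mathcal{X}_\nu$ is a component of $\calGrad(\mathcal{X})$; it inherits a good or adequate moduli space by the results of Halpern-Leistner and Alper, and it inherits a norm or $\mathrm{GL}_n$-quotient structure, since a $\mathrm{GL}_n$-quotient presentation restricts to a quotient by a Levi, which is again a global quotient. I would then choose generic $\lambda \in \sigma$ and $\mu \in \tau$. The lifts $\lambda_i,\mu_i$ determine cones $\sigma_i,\tau_i$ in the face $\rho_i$ spanned by the relative position, via $\mathcal{X}_{\lambda,\mu}^i \simeq (\mathcal{X}_\alpha)_{\rho_i}$; these cones should be morphisms in $\mathsf{Hall}$ because the stacks are locally constant in chambers.

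Finally, for the finite subcategories, each graded piece is a composite of induction, restriction and Tate twists, so it preserves $\mathcal{D}_\mathrm{c}$ by hypothesis. Since $\mathcal{D}_\mathrm{c}$ is stable and the filtration is finite, each filtered term preserves it as well. I expect the main obstacle to be the bookkeeping in the Hall category reduction, namely verifying that the $\sigma_i,\tau_i$ are genuine Hall morphisms. The filtration itself is formal given the earlier results.
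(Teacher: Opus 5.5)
Your proposal is correct and follows essentially the same route as the paper. The paper also proves the theorem by combining the stratification-induced filtration of the Steinberg functor (\cref{prop-st-filtrations}) with the identification of its graded pieces via twisted base change (\cref{proposition:explicitation_partial_steinberg}), and obtains the Hall-category version by replacing $\mathcal{X}$ with $\mathcal{X}_\alpha$ and choosing $\lambda, \mu$ generically in $\sigma, \tau$; your further checks (that $\mathcal{X}_\alpha$ again satisfies the standing assumptions, and that finitely many extensions stay inside the stable subcategory $\mathcal{D}_\mathrm{c}$) spell out points the paper leaves implicit.
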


\begin{proof}
    This is a combination of
    \cref{prop-st-filtrations,proposition:explicitation_partial_steinberg}.
    The version for cones follows by the discussion at the beginning of this subsection.
\end{proof}

\begin{remark}[D-modules]
    \cref{theorem:mackey-formula} also holds
    for all D-modules (\cref{eg-d-modules}),
    although \cref{assumption-six-functors} is not fully satisfied in this case.
    Indeed, as mentioned in \cref{eg-d-modules},
    all induction and restriction functors are still defined.
    The case of $\mathrm{Ind}'$ and $\mathrm{Res}'$
    follows from the proof of the theorem without modification,
    and the case of $\mathrm{Ind}$ and $\mathrm{Res}$
    then follows by taking adjunctions.
\end{remark}

\begin{theorem}
    \label{theorem:mackey-semisimple}
    In the situation of
    \cref{theorem:mackey-formula},
    suppose we are in either of the following cases:
    \begin{enumerate}
        \item
            $\mathcal{D}_\mathrm{c} (-)$ is the category of mixed Hodge modules
            as in \cref{eg-mhm},
            and $F \in \mathcal{D}_\mathrm{c} (\mathcal{X}_\lambda)$
            is a pure object; or
        \item
            $\mathcal{D}_\mathrm{c} (-)$ is the category of constructible sheaves
            as in \cref{eg-constructible-sheaves}
            with coefficients $\Lambda = \mathbb{Q}$,
            and $F \in \mathcal{D}_\mathrm{c} (\mathcal{X}_\lambda)$
            is the realization of a pure mixed Hodge module.
    \end{enumerate}
    Then we have
    \begin{align*}
        \mathrm{Res}_{\mu} \circ \mathrm{Ind}_{\lambda} (F)
        & \simeq
        \bigoplus_{i \in I}
        \mathrm{Ind}_{\lambda_i} \circ T^i_{\lambda, \mu} \circ \mathrm{Res}_{\mu_i} (F) \ ,
        \\
        \mathrm{Res}'_{\mu} \circ \mathrm{Ind}'_{\lambda} (F)
        & \simeq
        \bigoplus_{i \in I}
        \mathrm{Ind}'_{\lambda_i} \circ T^{i \prime}_{\lambda, \mu} \circ \mathrm{Res}'_{\mu_i} (F) \ .
    \end{align*}
\end{theorem}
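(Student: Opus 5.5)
The plan is to start from the filtration of \cref{theorem:mackey-formula} applied to $F$ and show that it splits. By \cref{prop-st-filtrations}, $\mathrm{Res}_\mu \circ \mathrm{Ind}_\lambda (F)$ is an iterated extension of the objects $G_i = \mathrm{Ind}_{\lambda_i} \circ T^i_{\lambda,\mu} \circ \mathrm{Res}_{\mu_i} (F)$, built by the exact triangles $\mathrm{St}^{>i}_{\lambda,\mu}(F) \to \mathrm{St}^{\geq i}_{\lambda,\mu}(F) \to \mathrm{St}^i_{\lambda,\mu}(F)$. It suffices to show that every connecting morphism $\mathrm{St}^i_{\lambda,\mu}(F) \to \mathrm{St}^{>i}_{\lambda,\mu}(F)[1]$ vanishes. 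This would follow from the vanishing of $\mathrm{Hom}(G_i, G_{i'}[1])$ for all $i, i'$, by induction on the finite ordered set $I$. Since every extension then splits, $\mathrm{St}^{\geq i}_{\lambda,\mu}(F) \simeq G_i \oplus \mathrm{St}^{>i}_{\lambda,\mu}(F)$ at each step.

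First, work in mixed Hodge modules, which is case (i). Take $F$ pure of weight $w$. By \cref{prop-res-preserves-purity}, $\mathrm{Res}_{\mu_i}(F)$ is pure of weight $w$, and by \cref{prop-ind-preserves-purity}, $\mathrm{Ind}_{\lambda_i}$ preserves purity. The Tate twist $T^i_{\lambda,\mu} = \langle -\dim h^i_{\lambda,\mu}\rangle$ is tensoring with $\mathbb{Q}(-d)[-2d]$, and it preserves weight, since $\mathbb{Q}(1)[2]$ is pure of weight $0$ in the convention where $M[1]$ shifts weight by one. Hence each $G_i$ is pure of weight $w$. The key input is then the standard weight vanishing: if $A$ has weight $\leq w$ and $B$ has weight $\geq w$, then $\mathrm{Hom}(A, B[1])$ sits in weight $\geq w+1$ relative to $A$, and this vanishing is what kills extensions between pure objects of the same weight.

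The main obstacle is that this weight argument is usually stated for bounded complexes on varieties with the \emph{bounded} derived category $D^b\mathrm{MHM}$, where pure objects of weight $w$ are direct sums of shifted pure Hodge modules and $\mathrm{Ext}^1$ between them is constrained. On stacks, pure objects can be unbounded, for example the cohomology of $\mathrm{B}\mathbb{G}_\mathrm{m}$, and the relevant Hom-group is a limit over a smooth atlas. The approach I would take is to use the formalism of \textcite{tubach-2025-mixed}, where weights on stacks are defined via smooth descent. There, the truncations $\mathcal{D}_{\leq w}$ and $\mathcal{D}_{\geq w}$ form a weight structure whose heart is closed under extensions only in split form, so that $\mathrm{Hom}(\mathcal{D}_{\leq w}, \mathcal{D}_{\geq w+1}) = 0$ (with the convention that $[1]$ raises weight). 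Then $G_{i'}[1] \in \mathcal{D}_{\geq w+1}$ and $G_i \in \mathcal{D}_{\leq w}$, so the connecting maps vanish. The same argument, via \cref{prop-ind-res-verdier} or by repeating it, handles $\mathrm{Res}'_\mu \circ \mathrm{Ind}'_\lambda$ using the $(I,\leq)$-filtration.

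Case (ii) then follows by applying the realization functor $\mathsf{MHM}_{\mathrm{con}} \to \mathsf{Shv}_{\mathrm{con}}(-;\mathbb{Q})$. This functor commutes with the six operations, and hence with $\mathrm{Ind}$, $\mathrm{Res}$, Tate twists and the localization triangles. The split filtration in mixed Hodge modules therefore maps to a split filtration of constructible sheaves, giving the asserted direct sum decomposition.
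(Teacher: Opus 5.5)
Your proposal is correct and follows the paper's proof. Both show that each graded piece is pure of weight $w$, using \cref{prop-ind-preserves-purity,prop-res-preserves-purity} and the fact that $\langle 1 \rangle$ preserves weights, and then use the vanishing of extensions between pure objects of the same weight on stacks, from Tubach's formalism (the paper cites \cite[Corollary~4.3.8]{tubach-2025-mixed}), to split the filtration of \cref{theorem:mackey-formula}. Your reduction of case (ii) to case (i) via the realization functor, which commutes with the induction, restriction and twist functors, spells out what the paper leaves implicit.
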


\begin{proof}
    This follows from \Cref{theorem:mackey-formula}
    and the fact that induction, restriction, and Tate twists
    all preserve pure mixed Hodge modules and their weights,
    by \cref{prop-ind-preserves-purity,prop-res-preserves-purity}.
    It follows that the associated graded pieces are pure of the same weight,
    and by \textcite[Corollary~4.3.8]{tubach-2025-mixed},
    there are no non-trivial extensions between them,
    and we obtain (non-canonical) splittings as stated.
\end{proof}

%% file: quotient.tex
\subsection{Linear quotients}

Let~$k$ be a field,
let $G$ be a reductive group over~$k$
with a split maximal torus $T \subset G$,
and let $V$ be a finite-dimensional $G$-representation.

We illustrate the Mackey formula,
\cref{theorem:mackey-formula},
for the quotient stack $\mathcal{X} = V / G$,
which is smooth and satisfies \cref{assumption-stratification}.

Recall from \cref{def-cl}
that the component lattice of~$\mathcal{X}$ is
$\mathrm{CL} (V / G) \simeq \Lambda_T / W$,
where $\Lambda_T = \mathrm{Hom} (\mathbb{G}_\mathrm{m}, T)$
is the cocharacter lattice,
and $W = \mathrm{N}_G (T) / T$ is the Weyl group.
Thus, we may identify elements
$\lambda \in \mathrm{CL} (V / G)$
with cocharacters
$\lambda \colon \mathbb{G}_\mathrm{m} \to T$
up to Weyl conjugation.

For a cocharacter $\lambda \colon \mathbb{G}_\mathrm{m} \to T$,
the induction and restriction functors in \cref{def-ind-res}
are defined from the correspondence
\begin{equation*}
    V_\lambda / L_\lambda
    \overset{q_\lambda}{\longleftarrow}
    V_\lambda^+ / P_\lambda
    \overset{p_\lambda}{\longrightarrow}
    V / G \ ,
\end{equation*}
where $V_\lambda, V_\lambda^+ \subset V$
are the fixed and attracting loci of~$\lambda$,
given by the $\lambda$-weight~$0$ and~$\geq 0$ parts of~$V$;
$L_\lambda = \mathrm{Z}_G (\lambda) \subset G$
is the Levi subgroup, and
$P_\lambda =
\{g \in G \mid \lim_{t \to 0} \lambda(t) \, g \, \lambda(t)^{-1} \text{ exists}\} \subset G$
is the corresponding parabolic subgroup.
The map~$q_\lambda$ is induced by the projections
$V_\lambda^+ \twoheadrightarrow V_\lambda$
and $P_\lambda \twoheadrightarrow L_\lambda$,
and $p_\lambda$ by the inclusions
$V_\lambda^+ \hookrightarrow V$
and $P_\lambda \hookrightarrow G$.

Let $\lambda, \mu \colon \mathbb{G}_\mathrm{m} \to T$
be two cocharacters.
The set of relative positions of~$\lambda$ and~$\mu$ is
\begin{equation*}
    I = W_\lambda \backslash W / W_\mu \ ,
\end{equation*}
where $W_\lambda, W_\mu \subset W$
are the stabilizers of $\lambda, \mu$, respectively.
It is equipped with the Bruhat order~$\preceq$,
which we may extend arbitrarily to a total order~$\leq$.

For each $i \in I$,
let $w_i \in \mathrm{N}_G (T)$ be a lift of $i \in W_\lambda \backslash W / W_\mu$,
and we have the cocharacters
$\lambda_i = w_i^{-1} \, \lambda \, w_i$
and
$\mu_i = w_i \, \mu \, w_i^{-1}$.
The induction and restriction functors for
$\lambda_i$ and $\mu_i$
in the Mackey formula in \cref{theorem:mackey-formula}
are defined from the correspondences
\begin{gather*}
    \frac{V_{\lambda_i} \cap V_\mu}{L_{\lambda_i} \cap L_\mu}
    \ \overset{q^i_\lambda}{\longleftarrow} \
    \frac{V_{\lambda_i}^+ \cap V_\mu}{P_{\lambda_i} \cap L_\mu}
    \ \overset{p^i_\lambda}{\longrightarrow} \
    \frac{V_\mu}{L_\mu} \ ,
    \\
    \frac{V_\lambda \cap V_{\mu_i}}{L_\lambda \cap L_{\mu_i}}
    \ \overset{q^i_\mu}{\longleftarrow} \
    \frac{V_\lambda \cap V_{\mu_i}^+}{L_\lambda \cap P_{\mu_i}}
    \ \overset{p^i_\mu}{\longrightarrow} \
    \frac{V_\lambda}{L_\lambda} \ ,
\end{gather*}
respectively.
Note that we can also write the first line using
$(\lambda, \mu_i)$ in place of $(\lambda_i, \mu)$
and obtain isomorphic correspondences,
and similarly for the second line.

Next, we have the Steinberg stack
\begin{equation*}
    \calSt_{\lambda, \mu} =
    V_\lambda^+ / P_\lambda
    \underset{V / G}{\times}
    V_\mu^+ / P_\mu
    \simeq
    P_\lambda \backslash Z / P_\mu \ ,
    \quad
    Z = \{ (x, g, y) \in V_\lambda^+ \times G \times V_\mu^+ \mid x = g y \} \ ,
\end{equation*}
with the action on~$Z$ given by
$p (x, g, y) q = (p x, p g q, q^{-1} y)$
for $p \in P_\lambda$ and $q \in P_\mu$.

For each $i \in I$, we have the Bruhat stratum
\begin{equation*}
    \calSt_{\lambda, \mu}^i \simeq
    \frac{V_\lambda^+ \cap V_{\mu_i}^+}{P_\lambda \cap P_{\mu_i}}
    \longhookrightarrow
    \calSt_{\lambda, \mu} \ ,
\end{equation*}
where we also have
$P_{\mu_i} = w_i P_\mu w_i^{-1}$
as subgroups of~$G$.
The inclusion of this stratum is induced by the maps
$V_\lambda^+ \cap V_{\mu_i}^+ \to Z$
given by $v \mapsto (v, w_i, w_i^{-1} v)$,
and $P_\lambda \cap P_{\mu_i} \to P_\lambda \times P_\mu$
given by $p \mapsto (p, w_i^{-1} p^{-1} w_i)$.

The cocharacters $(\lambda, \mu_i) \colon \mathbb{G}_\mathrm{m}^2 \to T$
now induces a $\mathbb{Z}^2$-grading
$V = \bigoplus_{a, b \in \mathbb{Z}} V^i_{a, b}$.
The morphism~$h_{\lambda, \mu}^i$
in \cref{lem-affine-bundle} is now
\begin{equation*}
    h_{\lambda, \mu}^i \colon
    \calSt_{\lambda, \mu}^i \simeq
    \frac{V^i_{\geq 0, \, \geq 0}}{P_\lambda \cap P_{\mu_i}}
    \longrightarrow
    \frac{\bigoplus_{a, b \geq 0, \, ab = 0} V^i_{a, b}}
    {(L_\lambda \cap P_{\mu_i}) \times_{L_\lambda \cap L_{\mu_i}} (P_\lambda \cap L_{\mu_i})}{}
    \ ,
\end{equation*}
where the right-hand side represents the composition
$\mathrm{Ind}_{\lambda_i} \circ \mathrm{Res}_{\mu_i}$.
The difference between the two sides
precisely captures the twist $T^i_{\lambda, \mu}$
in the Mackey formula.
Since the projection
$V^i_{\geq 0, \, \geq 0} \to \bigoplus_{a, b \geq 0, \, ab = 0} V^i_{a, b}$
has kernel $V^i_{>0, >0}$,
and the projection
$P_\lambda \cap P_{\mu_i} \to
(L_\lambda \cap P_{\mu_i}) \times_{L_\lambda \cap L_{\mu_i}} (P_\lambda \cap L_{\mu_i})$
has kernel the unipotent group
$N_\lambda \cap N_{\mu_i}$,
where $N_\lambda \subset P_\lambda$
and $N_{\mu_i} \subset P_{\mu_i}$
are the unipotent radicals.
It follows that $h^i_{\lambda, \mu}$ is smooth locally the quotient of
an affine space fibration with fibre~$V^i_{> 0, \, > 0}$
by the unipotent group $N_\lambda \cap N_{\mu_i}$,
recovering the conclusion of \cref{lem-affine-bundle}.
We also see that
\begin{equation*}
    \dim h^i_{\lambda, \mu} =
    \dim V^i_{> 0, \, > 0}
    - \dim \mathfrak{g}^i_{> 0, \, > 0} \ ,
\end{equation*}
which appears in the twist in the Mackey formula.

We now have more explicit descriptions of
many ingredients of \cref{theorem:mackey-formula}.
We mention special cases that are of interest below.

\begin{example}[Adjoint quotients]
    \label{eg-mackey-adjoint}
    In the case when $V = \mathfrak{g}$ is the adjoint representation of~$G$,
    we give a more explicit description of
    the Mackey formula in \cref{theorem:mackey-formula}.
    The Mackey formula for D-modules in this case is studied by
    \textcite{gunningham2018generalized}.

    In this case, for a parabolic subgroup $P$ of~$G$
    with Levi quotient~$L$,
    denote their respective Lie algebras by
    $\mathfrak{p}, \mathfrak{l}$.
    From the correspondence
    \[
        \mathfrak{l}/L
        \overset{q}{\longleftarrow}
        \mathfrak{p}/P
        \overset{p}{\longrightarrow}
        \mathfrak{g}/G \ ,
    \]
    we define the induction and restriction functors
    $\mathrm{Ind}_{P, \, L}^G = p_! \, q^*$
    and
    $\mathrm{Res}_{P, \, L}^G = q_! \, p^*$.

    For parabolics $P, Q \subset G$ containing~$T$
    with corresponding Levi subgroups $L, M \subset G$,
    we have the Steinberg functor
    \begin{equation*}
        \mathrm{St}_{P, \, Q} = \mathrm{Res}_{Q, \, M}^G \circ \mathrm{Ind}_{P, \, L}^G
        \colon \mathcal{D} (\mathfrak{l}/L)
        \longrightarrow \mathcal{D} (\mathfrak{m}/M) \ .
    \end{equation*}
    The Mackey formula gives a filtration of it by the functors
    $\mathrm{St}_{P, \, Q}^w$ where $w \in W_P \backslash W / W_Q$,
    with
    \[
        \mathrm{St}_{P, \, Q}^w \simeq
        \mathrm{Ind}^M_{\smash{M \cap P^{\dot{w}}, \, M \cap L^{\dot{w}}}} \circ
        \dot{w}^* \circ
        \mathrm{Res}^L_{\smash{L \cap {}^{\dot{w}}Q, \, L \cap {}^{\dot{w}}M}}
        \ ,
    \]
    where $\dot{w} \in \mathrm{N}_G (T)$ denotes a lift of~$w$,
    we denote
    $(-)^{\dot{w}} = \dot{w}^{-1} (-) \dot{w}$
    and
    $^{\dot{w}} (-) = \dot{w} (-) \dot{w}^{-1}$,
    and $\dot{w}^*$ denotes the equivalence induced by the isomorphism
    \begin{equation*}
        \dot{w}^{-1} (-) \dot{w} \colon
        L \cap {}^{\dot{w}} M \longsimto M \cap L^{\dot{w}} \ .
    \end{equation*}
    Note that there is no Tate twist,
    since $\dim h^w_{\lambda, \mu} = 0$
    in the notation above.

    See also \cite[\S 1.2]{gunningham2018generalized}
    for this formula for D-modules.
    This formalism also appears in the study of character sheaves
    on reductive Lie algebras,
    as in \cite{mirkovic2004character} and \cite{zhou2024character}.

    An analogous description also holds for the adjoint quotient
    $G / G$
    in place of $\mathfrak{g} / G$.
\end{example}

\begin{example}[Quiver representations]
\label{example:quiver-representations}
    Let $Q=(Q_0,Q_1)$ be a finite quiver with set of vertices $Q_0$ and set of arrows $Q_1$. We let $s,t\colon Q_1\rightarrow Q_0$ be the source and target maps. For $\mathbf{d}\in\mathbb{N}^{Q_0}$, the stack of $\mathbf{d}$-dimensional $Q$-representation of dimension vector $\mathbf{d}$ is given by
    \[
        \mathcal{X}_{Q, \, \mathbf{d}} =
        V_{Q, \, \mathbf{d}}/\mathrm{GL}_{\mathbf{d}} \ ,
        \qquad
        V_{Q, \, \mathbf{d}} =
        \prod_{\alpha\in Q_1}
        \mathrm{Hom} (k^{\mathbf{d}_{s(\alpha)}}, k^{\mathbf{d}_{t(\alpha)}}) \ ,
        \qquad
        \quad\mathrm{GL}_{\mathbf{d}}
        = \prod_{i\in Q_0} \mathrm{GL}_{\mathbf{d}_i} \ .
    \]
    For dimension vectors $\mathbf{d}, \mathbf{e}\in\mathbb{N}^{Q_0}$,
    consider the cocharacter
    $\lambda_{\mathbf{d}, \mathbf{e}} \colon \mathbb{G}_\mathrm{m} \to \mathrm{GL}_{\mathbf{d} + \mathbf{e}}$
    given by
    \[
        \lambda_{\mathbf{d}, \mathbf{e}} (t) =
        \bigl(
            t \, \mathrm{Id}_{\smash{k^{\mathbf{d}_i}}}
            \oplus \mathrm{Id}_{\smash{k^{\mathbf{e}_i}}}
        \bigr)_{i \in Q_0}
        \in \mathrm{GL}_{\mathbf{d} + \mathbf{e}} \ .
    \]
    The induction functor for $\lambda_{\mathbf{d}, \mathbf{e}}$
    gives the multiplication in various types of Hall algebras,
    and these induction and restriction functors are studied by Lusztig
    in his geometric quantization of quantum groups,
    with the definition of the canonical basis
    \cite[\S9.2]{lusztig1993introduction}.
    See also \cite{schiffmann2012lecturescanonical} for an exposition.

    The Mackey formula categorifies the fact that
    the induction and restriction functors
    acting on split Grothendieck groups
    $\mathrm{H}_Q = \bigoplus_{\mathbf{d}\in\mathbb{N}^{Q_0}}
    \mathrm{K}_\oplus (\mathcal{D}'_{\mathrm{c}} (\mathcal{X}_{Q, \, \mathbf{d}}))$
    form a (twisted) bialgebra,
    where we take $k = \mathbb{C}$,
    and $\mathcal{D}'_{\mathrm{c}} (\mathcal{X}_{Q, \, \mathbf{d}})$
    is Lusztig's subcategory of constructible sheaves,
    which is closed under induction and restriction,
    and on which the Mackey filtration splits as in
    \cref{theorem:mackey-semisimple}.
    On the split Grothendieck groups,
    induction gives a multiplication
    $m\colon \mathrm{H}_Q\otimes\mathrm{H}_Q\rightarrow\mathrm{H}_Q$
    and restriction gives a comultiplication
    $\Delta\colon\mathrm{H}_Q\rightarrow\mathrm{H}_Q\otimes\mathrm{H}_Q$.
    Our Mackey formula translates to the identity
    \begin{equation*}
        \Delta \circ m =
        (m \otimes m) \circ T \circ (\Delta \otimes \Delta) \ ,
    \end{equation*}
    where
    $T (a \otimes b \otimes c \otimes d) =
    q^{\chi (|d|, |a|)} (a \otimes c \otimes b \otimes d)$
    for homogeneous elements $a, b, c, d \in \mathrm{H}_Q$,
    where~$\chi$ is the Euler form of~$Q$,
    and~$q$ corresponds to the twist~$\langle 1 \rangle$.
    To obtain an actual twisted bialgebra from this,
    one needs to choose a different normalization,
    such as defining a new coproduct
    $\tilde{\Delta}\colon\mathrm{H}_Q\rightarrow\mathrm{H}_Q\otimes\mathrm{H}_Q$
    by
    $\tilde{\Delta}_{\mathbf{d}, \mathbf{e}} =
    q^{-\chi (\mathbf{e}, \mathbf{d})} \Delta_{\mathbf{d}, \mathbf{e}}$
    for $\mathbf{d}, \mathbf{e}\in\mathbb{N}^{Q_0}$,
    and from the previous identity, one can deduce that
    \begin{equation*}
        \tilde{\Delta} \circ m =
        (m \otimes m) \circ
        (1 \otimes S \otimes 1)
        \circ (\tilde{\Delta} \otimes \tilde{\Delta}) \ ,
    \end{equation*}
    where $S (b \otimes c) = q^{-\chi (|b|, |c|)} (c \otimes b)$
    for homogeneous elements $b, c \in \mathrm{H}_Q$,
    which is the required twisted bialgebra compatibility.
\end{example}

\subsection{Semistable \texorpdfstring{\textit{G}}{G}-bundles on a curve}
\label{subsec-bunss}

Let~$k$ be a field,
and let~$C$ be a geometrically connected smooth projective curve over~$k$
of genus~$g$.
Let $G$ be a split reductive group over~$k$,
and consider the moduli stack~$\mathcal{X} = \calBun_G^\mathrm{ss} (C)_d$
of semistable $G$-bundles on~$C$ of degree~$d$,
where $d \in \uppi_1 (G_{\smash{\bar{k}}})$.

We check the assumptions on~$\mathcal{X}$ in
\cref{theorem:stratification} as follows.
If $\operatorname{char} k = 0$,
then~$\mathcal{X}$ admits a good moduli space by
\cite[Theorem~8.1]{alper-halpern-leistner-heinloth-2023},
and a quadratic norm on $\mathrm{CL} (\mathcal{X})$
can be given by a global quotient presentation.
If $\operatorname{char} k > 0$,
then~$\mathcal{X}$ admits an adequate moduli space by
\cite[Theorem~2.1 and Lemma~4.8]{weissmann-zhang-2025},
and it admits a global quotient presentation $Y / \mathrm{GL}_n$ by the proof of
\cite[Proposition~3.28]{weissmann-zhang-2025}
(although the proof assumes $k = \bar{k}$,
it gives a morphism $\mathcal{X} \to \mathrm{BGL}_n$ defined over~$k$,
and it being representable over~$\bar{k}$ implies that over~$k$).

Let $P \subset G$ be a parabolic subgroup
with the corresponding Levi subgroup $L \subset G$,
and suppose that~$d$ is induced by an \emph{admissible degree}
$d_L \in \uppi_1 (L_{\smash{\bar{k}}}) \simeq \uppi_1 (P_{\smash{\bar{k}}})$
in the sense of
\cite[\S 10.3.1]{bu-davison-ibanez-nunez-kinjo-padurariu},
which is uniquely determined by~$d$ when it exists.
Consider the correspondence
\begin{equation*}
    \calBun_L^\mathrm{ss} (C)_{d_L}
    \overset{q}{\longleftarrow}
    \calBun_P^\mathrm{ss} (C)_{d_L}
    \overset{p}{\longrightarrow}
    \calBun_G^\mathrm{ss} (C)_d \ ,
\end{equation*}
with the meaning of $\calBun_P^\mathrm{ss} (C)$
as in \cref{eg-steinberg-bun}.
Again, we have the induction and restriction functors
$\mathrm{Ind}_{P, \, L}^G = p_! \, q^*$
and
$\mathrm{Res}_{P, \, L}^G = q_! \, p^*$.

For parabolics $P, Q \subset G$ containing~$T$
with corresponding Levi subgroups $L, M \subset G$,
and for a degree $d \in \uppi_1 (G_{\smash{\bar{k}}})$
for which there exist compatible degrees
$d_L \in \uppi_1 (L_{\smash{\bar{k}}})$
and $d_M \in \uppi_1 (M_{\smash{\bar{k}}})$,
\cref{theorem:mackey-formula}
gives a filtration of the Steinberg functor
\begin{equation*}
    \mathrm{St}_{P, \, Q} =
    \mathrm{Res}_{Q, \, M}^G \circ \mathrm{Ind}_{P, \, L}^G
    \colon \mathcal{D} (\calBun_L^\mathrm{ss} (C)_{d_L})
    \longrightarrow \mathcal{D} (\calBun_M^\mathrm{ss} (C)_{d_M})
\end{equation*}
with associated graded pieces
\[
    \mathrm{St}_{P, \, Q}^w \simeq
    \mathrm{Ind}^M_{\smash{M \cap P^{\dot{w}}, \, M \cap L^{\dot{w}}}} \circ
    \dot{w}^* \langle \theta_w \rangle \circ
    \mathrm{Res}^L_{\smash{L \cap {}^{\dot{w}}Q, \, L \cap {}^{\dot{w}}M}}
    \ ,
\]
with notations analogous to \cref{eg-mackey-adjoint},
except that the term $\mathrm{St}_{P, \, Q}^w$
is present in the filtration only if there exists a degree
$d_{L, \, M} \in \uppi_1 (L_{\smash{\bar{k}}} \cap {}^{\dot{w}} M_{\smash{\bar{k}}})$
compatible with both~$d_L$ and~$d_M$.
We define
$\theta_w = (1-g) \dim (\mathfrak{n}_{\mathfrak{p}} \cap {}^{\dot{w}} \mathfrak{n}_{\mathfrak{q}})$,
where $\mathfrak{n}_{\mathfrak{p}} \subset \mathfrak{p}$
and $\mathfrak{n}_{\mathfrak{q}} \subset \mathfrak{q}$
are the nilradicals,
which can be deduced from \cref{eq-dim-h}.

As mentioned in the introduction,
it would be interesting to extend this formula
from the semistable locus $\calBun_G^\mathrm{ss} (C)$
to the full stack $\calBun_G (C)$,
possibly with modified induction and restriction functors
and a more sophisticated twist functor in place of
$\dot{w}^* \langle \theta_w \rangle$,
which would give a compatibility between the full automorphic
Eisenstein and constant term functors
in geometric Langlands correspondences.

%% file: main.bib
@article{Antor2024,
  author  = {Antor, Jonas},
  title   = {Formality in the Deligne--Langlands correspondence},
  journal = {Int. Math. Res. Not.},
  volume  = {2024},
  number  = {7},
  pages   = {6043--6072},
  year    = {2024},
  doi     = {10.1093/imrn/rnad297},
  eprint  = {2302.11010},
  archivePrefix = {arXiv}
}

@article{BravermanGaitsgory2002,
    author       = {Braverman, A. and Gaitsgory, D.},
    title        = {Geometric Eisenstein series},
    volume       = {150},
    doi          = {10.1007/s00222-002-0237-8},
    pages        = {287--384},
    number       = {2},
    journaltitle = {Invent. Math.},
    eprinttype   = {arxiv},
    eprint       = {math/9912097},
    year         = {2002},
}

@article{bu2025intrinsic,
    author       = {Bu, Chenjing and Halpern-Leistner, Daniel and Ibáñez Núñez, Andrés and Kinjo, Tasuki},
    title        = {Intrinsic Donaldson–Thomas theory. I: Component lattices of stacks},
    version      = {preprint~v2},
    eprinttype   = {arxiv},
    eprint       = {2502.13892v2},
    date         = {2025-09-11},
}

@book{chriss1997representation,
  title={Representation theory and complex geometry},
  author={Chriss, N. and Ginzburg, V.},
  volume={42},
  year={1997},
  publisher={Springer}
}

@article{deligne1976representations,
    author       = {Deligne, P. and Lusztig, G.},
    title        = {Representations of reductive groups over finite fields},
    volume       = {103},
    doi          = {10.2307/1971021},
    pages        = {103--161},
    number       = {1},
    journaltitle = {Ann. Math.},
    year         = {1976},
}

@article{drinfeld2016geometric,
    author       = {Drinfeld, Vladimir and Gaitsgory, Dennis},
    title        = {Geometric constant term functor(s)},
    volume       = {22},
    doi          = {10.1007/s00029-016-0269-3},
    pages        = {1881--1951},
    number       = {4},
    journaltitle = {Sel. Math.},
    eprinttype   = {arxiv},
    eprint       = {1311.2071},
    year         = {2016},

}

@article{gunningham2018generalized,
    author       = {Gunningham, Sam},
    title        = {Generalized Springer theory for D-modules on a reductive Lie algebra},
    version      = {revised version, preprint~v4},
    eprinttype   = {arxiv},
    eprint       = {1510.02452v4},
    date         = {2021-01-30},
    addendum     = {original Sel.\ Math.\ 24.5 (2018), 4223--4277, doi: \href{https://doi.org/10.1007/s00029-018-0443-x}{\texttt{10.1007/s00029-018-0443-x}}; correction Sel.\ Math.\ 27.4 (2021), 62, 10~pp., doi: \href{https://doi.org/10.1007/s00029-021-00684-9}{\texttt{10.1007/s00029-021-00684-9}}},
}

@article{halpern-leistner-instability,
    author       = {Halpern-Leistner, Daniel},
    title        = {On the structure of instability in moduli theory},
    version      = {preprint~v5},
    eprinttype   = {arxiv},
    eprint       = {1411.0627v5},
    date         = {2022-02-04},
}

@article{Hennecart2024,
    author       = {Hennecart, Lucien},
    title        = {Perverse sheaves with nilpotent singular support on the stack of coherent sheaves on an elliptic curve},
    journaltitle = {Transform. Groups},
    volume       = {29},
    number       = {1},
    pages        = {103--142},
    year         = {2024},
    doi          = {10.1007/s00031-022-09749-1},
    eprinttype   = {arxiv},
    eprint       = {2101.03813},
}

@book{humphreys-1990-coxeter,
    author       = {Humphreys, James E.},
    title        = {Reflection groups and Coxeter groups},
    series       = {Camb. Stud. Adv. Math.},
    number       = {29},
    publisher    = {Camb. Univ. Press},
    year         = {1990},
    doi          = {10.1017/cbo9780511623646},
}

@article{kinjo2024cohomological,
    author       = {Kinjo, Tasuki and Park, Hyeonjun and Safronov, Pavel},
    title        = {Cohomological Hall algebras for $3$-Calabi--Yau categories},
    version      = {preprint~v2},
    eprinttype   = {arxiv},
    eprint       = {2406.12838v2},
    date         = {2026-01-24},
}

@article{KhovanovLauda2009,
    author       = {Khovanov, Mikhail and Lauda, Aaron D.},
    title        = {A diagrammatic approach to categorification of quantum groups. I},
    volume       = {13},
    doi          = {10.1090/s1088-4165-09-00346-x},
    pages        = {309--347},
    journaltitle = {Represent. Theory},
    eprinttype   = {arxiv},
    eprint       = {0803.4121},
    year         = {2009},
}

@book{letellier2004fourier,
    author       = {Letellier, Emmanuel},
    title        = {Fourier transforms of invariant functions on finite reductive Lie algebras},
    doi          = {10.1007/b104209},
    series       = {Lect. Notes Math.},
    number       = {1859},
    publisher    = {Springer},
    year         = {2005},
}

@article{lusztig1991quivers,
    author       = {Lusztig, George},
    title        = {Quivers, perverse sheaves, and quantized enveloping algebras},
    volume       = {4},
    doi          = {10.1090/s0894-0347-1991-1088333-2},
    pages        = {365--421},
    number       = {2},
    journaltitle = {J. Am. Math. Soc.},
    year         = {1991},
}

@book {lusztig1993introduction,
    AUTHOR = {Lusztig, George},
     TITLE = {Introduction to quantum groups},
    SERIES = {Prog. Math.},
    number = {110},
 PUBLISHER = {Birkh\"{a}user},
      YEAR = {1993},
      ISBN = {0-8176-3712-5}
}

@incollection{lusztig2006character,
    author       = {Lusztig, George},
    editor       = {Etingof, Pavel and others},
    title        = {Character sheaves and generalizations},
    doi          = {10.1007/0-8176-4467-9_12},
    series       = {Prog. Math.},
    pages        = {443--455},
    number       = {244},
    booktitle    = {The unity of mathematics},
    publisher    = {Birkhäuser},
    eprinttype   = {arxiv},
    eprint       = {math/0309134},
    year         = {2007},
}

@incollection{MarsSpringer1989,
    author       = {Mars, J. G. M. and Springer, T. A.},
    title        = {Character sheaves},
    booktitle    = {Orbites unipotentes et repr{\'e}sentations III. Orbites et faisceaux pervers},
    series       = {Ast{\'e}risque},
    number       = {173--174},
    pages        = {111--198},
    publisher    = {Soc. Math. Fr.},
    year         = {1989},
    url          = {https://numdam.org/item/AST_1989__173-174__111_0/},
}

@article{mirkovic2004character,
    author       = {Mirković, Ivan},
    title        = {Character sheaves on reductive Lie algebras},
    volume       = {4},
    pages        = {897--910},
    number       = {4},
    journaltitle = {Mosc. Math. J.},
    eprinttype   = {arxiv},
    eprint       = {math/0411532},
    year         = {2004},
}

@article{Rouquier2012,
    author       = {Rouquier, Raphaël},
    title        = {Quiver Hecke algebras and $2$-Lie algebras},
    volume       = {19},
    doi          = {10.1142/s1005386712000247},
    pages        = {359--410},
    number       = {2},
    journaltitle = {Algebra Colloq.},
    eprinttype   = {arxiv},
    eprint       = {1112.3619},
    year         = {2012},
}

@article{Schiffmann2006,
    author       = {Schiffmann, Olivier},
    title        = {Canonical bases and moduli spaces of sheaves on curves},
    journaltitle = {Invent. Math.},
    volume       = {165},
    number       = {3},
    pages        = {453--524},
    year         = {2006},
    doi          = {10.1007/s00222-005-0495-3},
}

@article{Schiffmann2012,
    author       = {Schiffmann, Olivier},
    title        = {On the {Hall} algebra of an elliptic curve. {II}},
    journaltitle = {Duke Math. J.},
    volume       = {161},
    number       = {9},
    pages        = {1711--1750},
    year         = {2012},
    doi          = {10.1215/00127094-1593362},
}

@incollection {schiffmann2012lecturescanonical,
    AUTHOR = {Schiffmann, Olivier},
    editor = {Brion, Michel},
     TITLE = {Lectures on canonical and crystal bases of {H}all algebras},
 BOOKTITLE = {Geometric methods in representation theory. {II}},
    SERIES = {S\'{e}min. Congr.},
    number = {24.2},
     PAGES = {143--259},
 PUBLISHER = {Soc. Math. Fr.},
      YEAR = {2012},
      ISBN = {978-2-85629-361-4},
   MRCLASS = {17B37 (14D23 14D24 14L30)},
  MRNUMBER = {3202708},
MRREVIEWER = {Kailash\ C.\ Misra},
}

@article{zhou2024character,
    author       = {Zhou, Tong},
    title        = {Character sheaves on reductive Lie algebras in positive characteristic},
    volume       = {2024},
    doi          = {10.1093/imrn/rnae221},
    pages        = {13827--13839},
    number       = {22},
    journaltitle = {Int. Math. Res. Not.},
    eprinttype   = {arxiv},
    eprint       = {2404.19210},
    year         = {2024},
}

@article{ScholzeSixFunctor,
    author       = {Scholze, Peter},
    title        = {Six-functor formalisms},
    version      = {preprint~v2},
    eprinttype   = {arxiv},
    eprint       = {2510.26269v2},
    date         = {2026-01-22},
}

@article{drinfeld-gaitsgory-2013-finiteness,
    author       = {Drinfeld, Vladimir and Gaitsgory, Dennis},
    title        = {On some finiteness questions for algebraic stacks},
    volume       = {23},
    doi          = {10.1007/s00039-012-0204-5},
    pages        = {149--294},
    number       = {1},
    journaltitle = {Geom. Funct. Anal.},
    eprinttype   = {arxiv},
    eprint       = {1108.5351},
    year         = {2013},
}

@article{drinfeld-gaitsgory-2014-braden,
    author       = {Drinfeld, Vladimir and Gaitsgory, Dennis},
    title        = {On a theorem of Braden},
    volume       = {19},
    doi          = {10.1007/s00031-014-9267-8},
    pages        = {313--358},
    number       = {2},
    journaltitle = {Transform. Groups},
    eprinttype   = {arxiv},
    eprint       = {1308.3786},
    year         = {2014},
}

@article{drinfeld-gaitsgory-2015-compact,
    author       = {Drinfeld, Vladimir and Gaitsgory, Dennis},
    title        = {Compact generation of the category of D-modules on the stack of $G$-bundles on a curve},
    volume       = {3},
    doi          = {10.4310/cjm.2015.v3.n1.a2},
    pages        = {19--125},
    number       = {1},
    journaltitle = {Camb. J. Math.},
    eprinttype   = {arxiv},
    eprint       = {1112.2402},
    year         = {2015},
}

@article{sandvik,
    author       = {Sandvik, Colton},
    title        = {Endoscopy for modular Hecke categories},
    version      = {preprint~v2},
    eprinttype   = {arxiv},
    eprint       = {2508.10214v2},
    date         = {2026-04-16},
}

@article{tubach-2025-mixed,
    author       = {Tubach, Swann},
    title        = {Mixed Hodge modules on stacks},
    volume       = {13},
    doi          = {10.1017/fms.2025.10122},
    pages        = {e175, 29~pp.},
    journaltitle = {Forum Math. Sigma},
    eprinttype   = {arxiv},
    eprint       = {2407.02256},
    year         = {2025},
}

@article{agkrrv,
    author       = {Arinkin, D. and Gaitsgory, Dennis and Kazhdan, D. and Raskin, Sam and Rozenblyum, N. and Varshavsky, Y.},
    title        = {The stack of local systems with restricted variation and geometric Langlands theory with nilpotent singular support},
    version      = {preprint~v2},
    eprinttype   = {arxiv},
    eprint       = {2010.01906v2},
    date         = {2022-04-06},
}

@article{koppensteiner-2020-hochschild,
    author       = {Koppensteiner, Clemens},
    title        = {Hochschild cohomology of torus equivariant D-modules},
    volume       = {2020},
    doi          = {10.1093/imrn/rny206},
    pages        = {6391--6420},
    number       = {19},
    journaltitle = {Int. Math. Res. Not.},
    eprinttype   = {arxiv},
    eprint       = {1506.07370},
    year         = {2020},
}

@article{liu-zheng,
    author       = {Liu, Yifeng and Zheng, Weizhe},
    title        = {Enhanced six operations and base change theorem for higher Artin stacks},
    version      = {preprint~v4},
    eprinttype   = {arxiv},
    eprint       = {1211.5948v4},
    date         = {2024-12-17},
}

@article{hemo-richarz-scholbach-2023,
    author       = {Hemo, Tamir and Richarz, Timo and Scholbach, Jakob},
    title        = {Constructible sheaves on schemes},
    volume       = {429},
    doi          = {10.1016/j.aim.2023.109179},
    pages        = {109179, 46~pp.},
    journaltitle = {Adv. Math.},
    eprinttype   = {arxiv},
    eprint       = {2305.18131},
    year         = {2023},
}

@article{alper-hall-rydh-2026-etale,
    author       = {Alper, Jarod and Hall, Jack and Rydh, David},
    title        = {The étale local structure of algebraic stacks},
    volume       = {59},
    doi          = {10.24033/asens.2637},
    pages        = {125--198},
    number       = {1},
    journaltitle = {Ann. Sci. Éc. Norm. Supér.},
    eprinttype   = {arxiv},
    eprint       = {1912.06162},
    year         = {2026},
}

@article{alper-2013-good,
    author       = {Alper, Jarod},
    title        = {Good moduli spaces for Artin stacks},
    volume       = {63},
    doi          = {10.5802/aif.2833},
    pages        = {2349--2402},
    number       = {6},
    journaltitle = {Ann. Inst. Fourier},
    eprinttype   = {arxiv},
    eprint       = {0804.2242},
    year         = {2013},
}

@article{alper-2014-adequate,
    author       = {Alper, Jarod},
    title        = {Adequate moduli spaces and geometrically reductive group schemes},
    volume       = {1},
    doi          = {10.14231/ag-2014-022},
    pages        = {489--531},
    number       = {4},
    journaltitle = {Algebr. Geom.},
    eprinttype   = {arxiv},
    eprint       = {1005.2398},
    year         = {2014},
}

@incollection{conrad-2014-reductive,
    author       = {Conrad, Brian},
    bookauthor   = {Brochard, Sylvain and Conrad, Brian and Oesterlé, Joseph},
    title        = {Reductive group schemes},
    series       = {Panor. Synth.},
    pages        = {93--444},
    number       = {42–43},
    booktitle    = {Autour des schémas en groupes. École d’Été “Schémas en groupes”. Volume I},
    publisher    = {Soc. Math. Fr.},
    year         = {2014},
}

@article{ibanez-nunez-finiteness,
    author       = {Ibáñez Núñez, Andrés},
    title        = {Finiteness of good moduli spaces of graded points},
    version      = {preprint},
    url          = {https://www.math.columbia.edu/~ibaneznunez/documents/finiteness-gms-graded.pdf},
    date         = {2025-03},
}

@article{luna-1975-adherences,
    author       = {Luna, D.},
    title        = {Adhérences d’orbite et invariants},
    volume       = {29},
    doi          = {10.1007/bf01389851},
    pages        = {231--238},
    journaltitle = {Invent. Math.},
    year         = {1975},
}

@article{hansen-mann-langlands,
    author       = {Hansen, David and Mann, Lucas},
    title        = {The categorical local Langlands conjecture},
    version      = {preprint~v2},
    eprinttype   = {arxiv},
    eprint       = {2606.00983v2},
    date         = {2026-06-04},
}

@article{bezrukavnikov-2016-aha,
    author       = {Bezrukavnikov, Roman},
    title        = {On two geometric realizations of an affine Hecke algebra},
    volume       = {123},
    doi          = {10.1007/s10240-015-0077-x},
    pages        = {1--67},
    journaltitle = {Publ. Math., Inst. Hautes Étud. Sci.},
    eprinttype   = {arxiv},
    eprint       = {1209.0403},
    year         = {2016},
}

@article{bu-davison-ibanez-nunez-kinjo-padurariu,
    author       = {Bu, Chenjing and Davison, Ben and Ibáñez Núñez, Andrés and Kinjo, Tasuki and Pădurariu, Tudor},
    title        = {Cohomology of symmetric stacks},
    version      = {preprint~v2},
    eprinttype   = {arxiv},
    eprint       = {2502.04253v2},
    date         = {2025-05-31},
}

@article{braden-2003-hyperbolic,
    author       = {Braden, Tom},
    title        = {Hyperbolic localization of intersection cohomology},
    volume       = {8},
    doi          = {10.1007/s00031-003-0606-4},
    pages        = {209--216},
    number       = {3},
    journaltitle = {Transform. Groups},
    eprinttype   = {arxiv},
    eprint       = {math/0202251},
    year         = {2003},
}

@article{mayeux-2026-magnetism,
    author       = {Mayeux, Arnaud},
    title        = {Algebraic magnetism},
    volume       = {113},
    doi          = {10.1007/s00233-026-10645-3},
    pages        = {126--201},
    number       = {1},
    journaltitle = {Semigroup Forum},
    eprinttype   = {arxiv},
    eprint       = {2203.08231},
    year         = {2026},
}

@article{alper-halpern-leistner-heinloth-2023,
    author       = {Alper, Jarod and Halpern-Leistner, Daniel and Heinloth, Jochen},
    title        = {Existence of moduli spaces for algebraic stacks},
    volume       = {234},
    doi          = {10.1007/s00222-023-01214-4},
    pages        = {949--1038},
    number       = {3},
    journaltitle = {Invent. Math.},
    eprinttype   = {arxiv},
    eprint       = {1812.01128},
    year         = {2023},
}

@article{weissmann-zhang-2025,
    author       = {Weißmann, Dario and Zhang, Xucheng},
    title        = {A stacky approach to identifying the semistable locus of bundles},
    doi          = {10.14231/ag-2025-008},
    pages        = {262--298},
    journaltitle = {Algebr. Geom.},
    eprinttype   = {arxiv},
    eprint       = {2302.09245},
    year         = {2025},
}

@article{kapranov-schechtman-schiffmann-yuan,
    author       = {Kapranov, Mikhail and Schechtman, Vadim and Schiffmann, Olivier and Yuan, Jiangfan},
    title        = {The Langlands formula and perverse sheaves},
    version      = {preprint~v1},
    eprinttype   = {arxiv},
    eprint       = {2412.01638v1},
    date         = {2024-12-02},
}

@article{dyckerhoff-wedrich,
    author       = {Dyckerhoff, Tobias and Wedrich, Paul},
    title        = {Perverse schobers of Coxeter type A},
    version      = {preprint~v1},
    eprinttype   = {arxiv},
    eprint       = {2504.08496v1},
    date         = {2025-04-11},
}

@article{kapranov-schechtman-2022-induction,
    author       = {Kapranov, Mikhail and Schechtman, Vadim},
    title        = {Parabolic induction and perverse sheaves on $W \backslash \mathfrak{h}$},
    volume       = {398},
    doi          = {10.1016/j.aim.2022.108200},
    pages        = {108200, 52 pp.},
    journaltitle = {Adv. Math.},
    eprinttype   = {arxiv},
    eprint       = {2006.04285},
    year         = {2022},
}

@article{kapranov-schechtman-schobers,
    author       = {Kapranov, Mikhail and Schechtman, Vadim},
    title        = {Perverse schobers},
    version      = {preprint~v2},
    eprinttype   = {arxiv},
    eprint       = {1411.2772v2},
    date         = {2015-11-18},
}

@article{glc-v,
    author       = {Gaitsgory, Dennis and Raskin, Sam},
    title        = {Proof of the geometric Langlands conjecture. V: The multiplicity one theorem},
    version      = {preprint~v3},
    eprinttype   = {arxiv},
    eprint       = {2409.09856v3},
    date         = {2026-01-16},
}

@article{glc-i,
    author       = {Gaitsgory, Dennis and Raskin, Sam},
    title        = {Proof of the geometric Langlands conjecture. I: Construction of the functor},
    version      = {preprint~v3},
    eprinttype   = {arxiv},
    eprint       = {2405.03599v3},
    date         = {2025-09-30},
}

@article{glc-iii,
    author       = {Campbell, Justin and Chen, Lin and Færgeman, Joakim and Gaitsgory, Dennis and Lin, Kevin and Raskin, Sam and Rozenblyum, Nick},
    title        = {Proof of the geometric Langlands conjecture. {III}: Compatibility with parabolic induction},
    version      = {preprint~v1},
    eprinttype   = {arxiv},
    eprint       = {2409.07051v1},
    date         = {2024-09-11},
}

@article{glc-ii,
    author       = {Arinkin, D. and Beraldo, D. and Campbell, J. and Chen, L. and Færgeman, J. and Gaitsgory, D. and Lin, K. and Raskin, S. and Rozenblyum, N.},
    title        = {Proof of the geometric Langlands conjecture. {II}: Kac–Moody localization and the {FLE}},
    version      = {preprint~v3},
    eprinttype   = {arxiv},
    eprint       = {2405.03648v3},
    date         = {2024-09-11},
}

@article{glc-iv,
    author       = {Arinkin, D. and Beraldo, D. and Chen, L. and Færgeman, J. and Gaitsgory, D. and Lin, K. and Raskin, S. and Rozenblyum, N.},
    title        = {Proof of the geometric Langlands conjecture. {IV}: Ambidexterity},
    version      = {preprint~v1},
    eprinttype   = {arxiv},
    eprint       = {2409.08670v1},
    date         = {2024-09-13},
}

@article{bu-padurariu-toda,
    author       = {Bu, Chenjing and Pădurariu, Tudor and Toda, Yukinobu},
    title        = {Semiorthogonal decompositions for stacks},
    version      = {preprint~v1},
    eprinttype   = {arxiv},
    eprint       = {2605.25976v1},
    date         = {2026-05-25},
}

@article{hennecart-mackey,
    author       = {Hennecart, Lucien},
    title        = {Cohomological Mackey formula for quotient stacks},
    version      = {preprint~v1},
    eprinttype   = {arxiv},
    eprint       = {2505.09483v1},
    date         = {2025-05-14},
}

@article{zhu-tame,
    author       = {Zhu, Xinwen},
    title        = {Tame categorical local Langlands correspondence},
    version      = {preprint~v1},
    eprinttype   = {arxiv},
    eprint       = {2504.07482v1},
    date         = {2025-04-10},
}

@article{lusztig-1985-character-i,
    author       = {Lusztig, George},
    title        = {Character sheaves. I},
    volume       = {56},
    doi          = {10.1016/0001-8708(85)90034-9},
    pages        = {193--237},
    number       = {3},
    journaltitle = {Adv. Math.},
    year         = {1985},
}

@article{lusztig-1985-character-ii,
    author       = {Lusztig, George},
    title        = {Character sheaves. {II}},
    volume       = {57},
    doi          = {10.1016/0001-8708(85)90064-7},
    pages        = {226--265},
    number       = {3},
    journaltitle = {Adv. Math.},
    year         = {1985},
}

@article{lusztig-1985-character-iii,
    author       = {Lusztig, George},
    title        = {Character sheaves. {III}},
    volume       = {57},
    doi          = {10.1016/0001-8708(85)90065-9},
    pages        = {266--315},
    number       = {3},
    journaltitle = {Adv. Math.},
    year         = {1985},
}

@article{lusztig-1986-character-iv,
    author       = {Lusztig, George},
    title        = {Character sheaves. {IV}},
    volume       = {59},
    doi          = {10.1016/0001-8708(86)90036-8},
    pages        = {1--63},
    number       = {1},
    journaltitle = {Adv. Math.},
    year         = {1986},
}

@article{lusztig-1986-character-v,
    author       = {Lusztig, George},
    title        = {Character sheaves. V},
    volume       = {61},
    doi          = {10.1016/0001-8708(86)90071-x},
    pages        = {103--155},
    number       = {2},
    journaltitle = {Adv. Math.},
    year         = {1986},
}

@article{VaragnoloVasserot2011,
    author       = {Varagnolo, M. and Vasserot, E.},
    title        = {Canonical bases and {KLR}-algebras},
    volume       = {659},
    doi          = {10.1515/crelle.2011.068},
    pages        = {67--100},
    journaltitle = {J. Reine Angew. Math.},
    eprinttype   = {arxiv},
    eprint       = {0901.3992},
    year         = {2011},
}

@article{Sauter2013,
    author       = {Sauter, Julia},
    title        = {Generalized quiver Hecke algebras},
    version      = {preprint~v2},
    eprinttype   = {arxiv},
    eprint       = {1306.3892v2},
    date         = {2013-07-03},
}
